\title[Long time dynamics of forced critical SQG]{Long time dynamics of forced critical SQG}
\date{\today}
\author{Peter Constantin}
\address{Department of Mathematics, Princeton University, Princeton, NJ 08544}
\email{const@math.princeton.edu}
\author{Andrei Tarfulea}
\address{Department of Mathematics, Princeton University, Princeton, NJ 08544}
\email{tarfulea@math.princeton.edu}
\author{Vlad Vicol}
\address{Department of Mathematics, Princeton University, Princeton, NJ 08544}
\email{vvicol@math.princeton.edu}
\theoremstyle{plain}
\newtheorem{theorem}{Theorem}[section]
\newtheorem{definition}[theorem]{Definition}
\newtheorem{lemma}[theorem]{Lemma}
\newtheorem{proposition}[theorem]{Proposition}
\newtheorem{corollary}[theorem]{Corollary}
\theoremstyle{definition}
\newtheorem{remark}[theorem]{Remark}
\def\tilde{\widetilde}
\numberwithin{equation}{section}
\renewcommand\hat{\widehat}
\def\ZZ{{\mathbb Z}}
\def\RR{{\mathbb R}}
\def\TT{{\mathbb T}}
\def\RSZ{{\mathcal R}}
\def\KK{{\mathcal K}}
\def\AA{{\mathcal A}}
\def\BB{{\mathcal B}}
\def\ra{\rangle}
\def\la{\langle}
\newcommand{\eps}{\varepsilon}
\renewcommand{\phi}{\varphi}
\def\intint{\int\!\!\!\int}
\begin{document}

%%%%%%%%%%%%%%%%%%%%%%%%%% THE ABSTRACT %%%%%%%%%%%%%%%%%%%%%%%%
\begin{abstract}
We prove the existence of a compact global attractor for the dynamics of the forced critical surface quasi-geostrophic equation (SQG) and prove that it has finite fractal (box-counting) dimension.  In order to do so we give a new proof of global regularity for critical SQG. The main ingredient is the nonlinear maximum principle in the form of a nonlinear lower bound on the fractional Laplacian, which is used to bootstrap the regularity directly from $L^\infty$ to $C^\alpha$, without the use of De Giorgi techniques. We prove that for large time, the norm of the solution measured in a sufficiently strong topology becomes bounded with bounds that depend solely on norms of the force, which is assumed to belong merely to $L^\infty \cap H^1$. Using the fact that the solution is bounded independently of the initial data after a transient time, in spaces conferring enough regularity, we prove the existence of a compact absorbing set for the dynamics in $H^1$, obtain the compactness of the linearization and the continuous differentiability of the solution map. We then prove exponential decay of high yet finite dimensional volume elements in $H^1$ along solution trajectories, and use this property to bound the dimension of the global attractor.  

\end{abstract}

%%%%%%%%%%%%%%%%%%%%%%%% Classification and Keywords %%%%%%%%%%%%%%%%%%%%

\subjclass[2000]{35Q35}
\keywords{Surface quasi-geostrophic equation (SQG), nonlinear maximum principle, global attractor.}

\maketitle

\setcounter{tocdepth}{1}
\tableofcontents

\section{Introduction}\label{sec:intro}

Nonlinear forced dissipative partial differential equations can generate physical space patterns which evolve in a temporally complex manner. As parameters are varied, the dynamics may transition from simple to chaotic and ultimately to fully turbulent. Nevertheless, several forced nonlinear dissipative PDE of hydrodynamic origin have been shown to have finite dimensional long time behavior. This has been proved if certain minimal conditions are satisfied.  Chief among them is the property that linearizations about time evolving solutions are dominated in a certain sense by the linear dissipative part. This is the case for semilinear dissipative PDE such as the Navier-Stokes system in 2D, or subcritical quasilinear damped systems. In this paper we study the long time behavior of a critical quasilinear system, where this property is far from obvious.

The forced, critically dissipative surface quasi-geostrophic  (SQG) equation is
\begin{align}
&\partial_t \theta + u \cdot \nabla \theta + \kappa \Lambda \theta = f \label{eq:SQG:1}\\
&u = \RSZ^\perp \theta = (-\RSZ_2\theta,\RSZ_1\theta) \label{eq:SQG:2}\\
&\theta(\cdot,0) = \theta_0 \label{eq:SQG:3}
\end{align}
where $\Lambda=  (-\Delta)^{1/2}$, $\RSZ_j = \partial_j \Lambda^{-1}$ is the $j$th Riesz transform, and the equations are set on the periodic domain $\TT^2 = [-\pi,\pi]^2$.
Here $\kappa>0$ is a positive constant, $\theta_0$ is the initial condition, and $f=f(x)$ is a time-independent force. The force is assumed
to belong to  $L^\infty(\TT^2) \cap H^1(\TT^2)$, while the initial data is assumed to belong to $H^1(\TT^2)$. We consider forcing and initial data of zero average, that is
$
\int_{\TT^2} f(x) dx = \int_{\TT^2} \theta_0(x) dx = 0
$
which immediately implies that a solution $\theta$ of \eqref{eq:SQG:1}--\eqref{eq:SQG:3} obeys
\[
\int_{\TT^2} \theta(x,t) dx = 0
\]
for any $t\geq 0$. Throughout this manuscript we consider mean-zero (zero average) solutions.

The SQG equation describes the evolution of a surface temperature field $\theta$ in a rapidly rotating, stably stratified fluid with potential vorticity~\cite{ConstantinMajdaTabak94,HeldPierrehumbertGarnerSwanson95}. From the mathematical point of view, the non-dissipative SQG equations (the system \eqref{eq:SQG:1}--\eqref{eq:SQG:3} with $\kappa = 0$) have properties that are similar to those of the 3D Euler equations in vorticity form~\cite{ConstantinMajdaTabak94}, and yet one may for instance prove the global existence of finite energy weak solutions~\cite{Resnick95}, albeit for completely different reasons than for 2D Euler. Initial numerical simulations~\cite{ConstantinMajdaTabak94,OhkitaniYamada97,ConstantinNieSchorghofer98} have furthermore exhibited highly nonlinear features in the inviscid evolution such as the formation of sharp fronts. The latter issue has been studied analytically~\cite{Cordoba98,CordobaFefferman01,CordobaFefferman02,Chae08,FeffermanRodrigo11} and also in more recent numerical simulations~\cite{CordobaFontelosManchoRodrigo05,DengHouLiYu06,OhkitaniSakajo10,ConstantinLaiSharmaTsengWu12}. Whether solutions of the inviscid SQG equations can develop singularities in finite time remains an open problem.

If in the three-dimensional quasi-geostrophic equations~\cite{Pedlosky82} we take into account damping (Ekman pumping at the boundary) and external sources we arrive at the system the system \eqref{eq:SQG:1}--\eqref{eq:SQG:3}. The square root of the Laplacian $\Lambda = (-\Delta)^{1/2}$ naturally arises as the Dirichlet to Neumann map for the surface temperature $\theta$. See e.g.~\cite{Constantin02} and references therein. 

Among the dissipative operators $\Lambda^\gamma$ with $0 < \gamma \leq 2$, the exponent $\gamma=1$ appearing in \eqref{eq:SQG:1} is not just physically motivated but also mathematically challenging. In view of the underlying scaling invariance associated with \eqref{eq:SQG:1} (see Remark~\ref{rem:box} below) and the available conservation laws (the $L^\infty$ maximum principle), it is customary to refer to the dissipation power $\gamma=1$ as {\em critical}, although the question of whether a dramatic change in the behavior of the solution occurs for $\gamma<1$ (the {\em supercritical} case) remains an open problem. 

From the mathematical point of view, the issue of {\em global regularity} {vs} {\em finite-time blowup} for the fractionally dissipative SQG equation has been extensively studied over the past two decades. The subcritical case ($\gamma>1$) has been essentially resolved in \cite{Resnick95,ConstantinWu99}.  See also~\cite{Wu01,SchonbekSchonbek03,Ju04,Marchand08a,DongLi08} and references therein for further qualitative properties of weak and strong solutions in the subcritical case.

The issue of global regularity for the critical SQG equation is more challenging since the balance between the nonlinearity and the dissipative term in \eqref{eq:SQG:1}--\eqref{eq:SQG:3} is the same no matter at which scales one zooms in. This is why treating the equation as a perturbation of the fractional heat equation fails to be useful for large initial data (and nonlinearity). In order to obtain bounds on the solution one has to appeal to finer methods.  Two different such sophisticated tools have been introduced independently in \cite{CaffarelliVasseur10,KiselevNazarovVolberg07}  to show that solutions of \eqref{eq:SQG:1}--\eqref{eq:SQG:3} do not develop singularities in finite time, for arbitrarily large initial data. Before these works, the global regularity was only known for initial data which is small in the $L^\infty$ norm~\cite{ConstantinCordobaWu01} and other scaling critical spaces~\cite{ChaeLee03,CordobaCordoba04,Wu04,Miura06,Ju07,ChenMiaoZhang07,HmidiKeraani07}.

The proof of global regularity in~\cite{KiselevNazarovVolberg07} is based on constructing a family of Lipschitz moduli of continuity with the property that if the initial data obeys such a modulus so will the solution of the critical SQG equation for all later times, and so that this family behaves nicely under rescaling. See also~\cite{DongDu08} for applications of the modulus of continuity method to the case of the whole space, \cite{FriedlanderPavlovicVicol09} for global regularity in the presence of a Lipschitz forcing term, \cite{KiselevNazarov10} in the presence of a linear dispersive force, and~\cite{SilvestreVicol12} for regularity of critical linear drift-diffusion equations.

The proof of~\cite{CaffarelliVasseur10} on the other hand employs the ideas of De Giorgi iteration to the case of a nonlocal parabolic equation, and shows that bounded weak solutions must in fact instantly become H\"older continuous (and hence classical). We also refer to \cite{CaffarelliVasseur10b,CaffarelliChanVasseur11} and references therein for applications of the De Giorgi ideas to more degenerate nonlocal parabolic equations, and \cite{ConstantinWu09,Silvestre10c} for linear fractional advection-diffusion equations (the latter results for linear equations are in fact sharp~\cite{SilvestreVicolZlatos13}).

In an attempt to find a bridge between these two proofs, in~\cite{KiselevNazarov09} a completely different proof of global regularity for critical SQG was obtained. The proof of~\cite{KiselevNazarov09} relies on keeping track of the action of a dual SQG evolution on a suitable family of test functions (Hardy molecules), in order to control the evolution of a H\"older norm.

In the three proofs of global regularity for \eqref{eq:SQG:1}--\eqref{eq:SQG:2} mentioned above, the quantitative way in which the dissipation is dominating the nonlinear term is not immediately transparent. In order to shed more light on this issue  \cite{ConstantinVicol12} introduced {\em nonlinear} lower bounds for {\em linear} nonlocal operators such as the fractional Laplacian. The {nonlinear nature} of these lower bounds appears because of the conservation laws which are available a priori for solutions. The inequalities bound the dissipative term from below and the nonlinear term from above in a way which makes it transparent why the {\em size} of the dissipation dominates the {\em size} of the nonlinearity. In \cite{ConstantinVicol12} it was shown that the nonlinear lower bound for $\Lambda$ is almost sufficient: it implies the global regularity for the critical SQG equation, but only if we additionally know that the solution has the {\em only small shocks} property (there exits $0 < \delta < 1$ and $L>0$ such that $|\theta(x,t) - \theta(y,t)| \leq \delta$ whenever $|x-y|\leq L$). It was proved that if the initial data has only small shocks, the size of these shocks can at most double as time evolves. This extra step is however quite technical as it uses once more the nonlinear lower bound for the fractional Laplacian, and it depends strongly on the nature of the nonlinearity.

The main contributions of this paper are as follows. We give a fully transparent, new and final proof of global regularity for the critical SQG equation, which relies {\em solely} on the nonlinear lower bound for the fractional Laplacian (Theorem~\ref{thm:global:regularity}). The bounds we obtain on the solution incorporate an explicit decay from the initial data in both weak and {\em strong topologies} (Theorem~\ref{thm:H:3/2:absorbing}). Using this crucial ingredient we prove the existence of a compact global attractor for the forced SQG dynamics (Theorem~\ref{thm:attractor:existence}), which has finite box-counting and hence Hausdorff dimension (Theorem~\ref{thm:finite:attractor}). To the best of our knowledge this is the first such result for systems that are both {\em quasilinear} and {\em critical}.

The main ingredient in the proof of the results mentioned above is the nonlinear lower bound for the fractional Laplacian established in \cite{ConstantinVicol12}. One of the main new ideas over \cite{ConstantinVicol12} is to replace the smallness coming from the ``only small shocks'' property (for which time dependence is difficult to control) with the smallness of the $\alpha$ at the exponent of the H\"older space $C^\alpha$. This simple idea is also implicitly present in the De Giorgi's methodology: the size of the H\"older exponent obtained in the bootstrap step depends on the size of the data in $L^\infty$, the ellipticity constant, and the size of the force. The advantage of the method outlined below is that the propagation of $C^\alpha$ regularity, for some {\em sufficiently small $\alpha$}, is achieved directly, without appealing to the complex De Giorgi iteration technique (Theorem~\ref{thm:Holder:propagation}). Once the solution is H\"older continuous, the nonlinear lower bound also may be used to immediately obtain the higher regularity of solutions. To better outline the method introduced here, we first apply it to the critical Burgers equation in Section~\ref{sec:Burgers}, and then to the critical SQG equation in Section~\ref{sec:global}.

We note that the proof of propagation of H\"older continuity also works in the presence of a force which lies merely in $L^\infty \cap H^1$ (the modulus of continuity proof given in \cite{FriedlanderPavlovicVicol09} requires Lipschitz forcing). Moreover, the argument is {\em dynamic} rather than in the spirit of a maximum principle, in the sense that the size of the H\"older norm of the solution is not just bounded in terms of the initial data and force, but as time evolves its size is estimated solely in terms of the force (Theorem~\ref{thm:H:3/2:absorbing} and Lemma~\ref{lem:C:alpha:absorbing}). In the unforced case this amounts to proving the decay of the H\"oder norm and higher Sobolev norms.

Armed with a proof of regularity that is dynamic, we can study the long time dynamics of solutions of the forced SQG. The decay in the unforced case has been addressed in~\cite{Dong10}. The behavior of the long-time averages along solutions of the critical SQG equations via viscous approximations was addressed in ~\cite{ConstantinTarfuleaVicol13}, where we have obtained the absence of anomalous dissipation. We note that the later result may also be proven using the estimates of this paper, but necessitates higher regularity on the force than in
\cite{ConstantinTarfuleaVicol13}.

In the second part of the paper we address the existence of a compact global attractor, and prove that it has finite box-counting dimension, and a forteriori finite Hausdorff dimension. The space-periodic setting is needed for this purpose. The {critical} SQG equation is {\em quasilinear}. To the best of our knowledge, until now all proofs of finite dimensionality have been done either for dissipative or damped {\em semilinear} equations, or {\em subcritical} quasilinear equations. See e.g. the works \cite{FoiasProdi67,FoiasManleyTemamTreve83,FoiasTemam84,ConstantinFoias85,ConstantinFoiasTemam85,ConstantinFoiasManleyTemam85,FoiasSellTemam85,Constantin87,ConstantinFoiasTemam88,FoaisSellTemam88,DoeringGibbon91,FoiasTiti91,JonesTiti92,Kukavica92,JonesTiti93,FoiasKukavica95,CockburnJonesTiti97,GibbonTiti97,Ziane97} for the 2D periodic Navier-Stokes equations and related systems, the books \cite{ConstantinFoias88,Hale88,Ladyzhenskaya91,BabinVishik92,Temam97,FoiasManleyRosaTemam01,Robinson01,ChepyzhovVishik02}, and references therein.  In the context of the dissipative SQG equation, the global attractor has been addressed previously addressed only for the subcritical regime: \cite{Ju05a} proved the existence of compact global attractor and \cite{WangTang13} showed it has finite dimensionality fractal dimension (see also \cite{Berselli02} regarding the notion of a weak attractor).

In order to prove finite dimensionality we establish the existence of a compact absorbing set in phase space.  We work in the phase space $H^1$, which is the largest Hilbert space in which uniqueness of weak solutions of SQG is currently available. Weak solutions are known to exist for initial data in $L^2$ but their uniqueness is not known~\cite{Resnick95}. It is known that solutions of the unforced SQG with initial data in $H^1$ exist for short time~\cite{Ju07}, as a result of weak-strong stability of the equation in $H^{1+\epsilon}$. The time of existence however depends on the initial function and {\em not only on its norm}. There is no lower bound on the time of existence based solely on the $H^1$ norm. Nevertheless, the solution is unique, and becomes instantly smooth~\cite{Miura06,Ju07,Dong10}. The same result can be proved for smooth forced SQG. We need only a limited amount of smoothness, in particular $C^{\alpha}$, for small $\alpha>0$. We use the nonlinear maximum principle~\cite{ConstantinVicol12} (estimate~\eqref{eq:D:h:lower} below) to show the global persistence of a $C^{\alpha}$ norm, with $\alpha$ small compared to the $L^{\infty}$ norm of the solution (Theorem~\ref{thm:Holder:propagation}). We use the $L^p$ Poincar\'{e} inequality for the fractional Laplacian (Proposition~\ref{prop:Poincare} below) to show that solutions become bounded in $L^{\infty}$ with a bound that depends only on the norm of the force and not on the initial data, after a time that depends on the initial 
$H^1$ data. We apply again the new proof of global existence to show that the solution becomes bounded in a $C^{\alpha}$ space with both $\alpha$ and the solution bound depending only on norms of the forces (Lemma~\ref{lem:C:alpha:absorbing}). Since we now have supercritical information, we use the nonlinear maximum principle again in its $C^{\alpha}$ variant to show that the solution becomes bounded in $H^{3/2}$. The upshot is that there exists a compact absorbing set ${\mathcal{B}}$ for the evolution of SQG in $H^{1}$ which is a bounded set in $H^{3/2}$ (Theorem~\ref{thm:H:3/2:absorbing}). This means that for any initial data $\theta_0\in H^1$ there exists a time $t(\theta_0)$ after which the unique solution $S(t)\theta_0$ with initial datum $\theta_0$ belongs to this absorbing set ${\mathcal{B}}$, which in turn depends on the forces alone and not on the initial data.  We show additional properties of the solution after this transient time: higher regularity, specifically  $S(t)\theta_0\in H^2$ for almost all $t>t(\theta_0)$, continuity in phase space (Proposition~\ref{prop:continuity}), and backward uniqueness (Proposition~\ref{prop:backwards}), i.e., the injectivity of $S(t)$ on the absorbing set. These properties are used to show that the the set ${\mathcal{A}} = \cap_{t>0}S(t){\mathcal{B}}$ is the global attractor for the evolution in $H^1$ (Theorem~\ref{thm:attractor:existence}). For any $\theta_0\in H^1$ the solution tends to the global attractor. The convergence is uniform on bounded sets in $H^{1+\epsilon}$.  We also establish compactness of the linearization of the solution map and local uniform approximation results  (Proposition~\ref{prop:unif:diff}). The finite dimensionality of the attractor (Theorem~\ref{thm:finite:attractor}) is then obtained by applying classical tools~\cite{ConstantinFoias85,ConstantinFoias88}.

At this stage we note that to date the global regularity for the supercritical SQG equation with arbitrarily large initial data remains open. The type of known results are: small data global well-posedness (cf.~\cite{ConstantinCordobaWu01,ChaeLee03,CordobaCordoba04,Wu04,Miura06,Ju07,ChenMiaoZhang07,HmidiKeraani07,Yu08} and references therein), conditional regularity (cf.~\cite{ConstantinWu08,ConstantinWu09,DongPavlovic09,DongPavlovic09b}),  eventual regularity (cf.~\cite{Silvestre10a,Dabkowski11,Kiselev11}), or global regularity for dissipative operators which are only logarithmically supercritical~\cite{DabkowskiKiselevVicol12,XueZheng12,DabkowskiKiselevSilvestreVicol12}. The nonlinear lower bound for the fractional Laplacian (cf.~\cite{ConstantinVicol12}) may be employed to recover most of these results.

\section{Preliminaries} \label{sec:preliminaries}

We abuse notation and denote in the same way spaces of vector functions and scalar functions.
We do  not write the subindex ``per'', to emphasize that we work with $\TT^2$-periodic functions, i.e. $L^2_{\rm per}$ is simply written as $L^2$. We also overload notation to denote by $\phi \colon \RR^2 \to \RR$ the periodic extension to the whole space of a $\TT^2$ periodic function $\phi$.

\begin{remark}[\bf Scaling] \label{rem:box}
Choosing to work on the periodic box $\TT^2 = [-\pi,\pi]^2$ is just a matter of convenience for the presentation, so that the group of characters is $\ZZ^2$. All the results in this paper may be translated to the case of a general periodic box $\TT_L^2 = [-L/2,L/2]^2$ as follows. The critical SQG equation has a natural scaling invariance associated to it.
If $\theta(x,t)$ is a solution of  \eqref{eq:SQG:1}--\eqref{eq:SQG:3} on $[0,T] \times \TT^2$, with force $f(x)$ and initial data $\theta_0(x)$, then 
\[
\theta_\lambda(x,t) = \theta(x/ \lambda, t/ \lambda)
\]
is also a solution of the equations, but on the space-time domain $[0,\lambda T] \times [-\lambda \pi, \lambda \pi]^2$, with force $f_\lambda(x) = (1/ \lambda) f(x/\lambda)$ and initial condition $\theta_{0\lambda}(x) = \theta_0(x/\lambda)$. The value of $\kappa$ remains unchanged. Thus, in order to work on the box $\TT_L^2$, one merely has to set   $\lambda= L/(2\pi)$ in the below argument.
Note also that $\| \theta\|_{L^\infty} = \| \theta_\lambda \|_{L^\infty}$ for any $\lambda> 0$ and that the $L^\infty$ norm is non-increasing along solution paths, which is why it is customary to refer to \eqref{eq:SQG:1}--\eqref{eq:SQG:3} as the {\em critical} SQG equations.
\end{remark}

The fractional Laplacian $\Lambda^s$, with $s\in \RR$ may be defined in this context as the Fourier multiplier with symbol $|k|^s$, i.e. $\Lambda^s \phi(x) = \sum_{k\in\ZZ^2_*} |k|^s \hat\phi_k \exp(i k \cdot x)$, where $\phi(x) = \sum_{k \in \ZZ^2_*} \hat\phi_k \exp(i k \cdot x)$. Note that the eigenvalues of $\Lambda = (-\Delta)^{1/2}$ are given by $|k|$, with $k \in \ZZ^2_* = \ZZ \setminus \{0\}$. 
We label them in increasing order (counting multiplicity) as 
\[
0< \lambda_1 \leq \ldots \leq  \lambda_n \leq \ldots.
\] 
and denote the eigenfunction associated to $\lambda_j$ by $e_j$. Then $\{e_j\}_{j\geq1}$ is an orthonormal basis of $L^2$, and the sets $\{ \lambda_j\}_{j\geq 1}$ and $\{|k|\}_{k \in \ZZ^2_*}$ are equal. In view of the choice  $\TT^2 = [-\pi,\pi]^2$, we have that $\lambda_1=1$.

As a consequence of the mean-free setting, for $s\in \RR$ we may identify the homogenous Sobolev spaces $\dot{H}^s(\TT^2)$ and the inhomogenous Sobolev spaces $H^s(\TT^2)$, and we simply denote these by $H^s$ (without ``dots''). As usual these are the closure of (mean-free) $C^\infty(\TT^2)$ under the norm
\[
\|\phi\|_{H^s} = \|\Lambda^s \phi \|_{L^2} .
\]
Moreover, for $p\in [1,\infty]$ we denote by $H^{s,p} = H^{s,p}(\TT^2)$ the space of mean-free $L^p(\TT^2)$ functions $\phi$, which can be written as $\phi = \Lambda^{-s} \psi$, with $\psi \in L^p$. This is normed by
$
\|\phi\|_{H^{s,p}} = \|\Lambda^s \phi\|_{L^p}.
$
Lastly, H\"older spaces are denoted as usual by $C^\alpha$ for $\alpha \in (0,1)$, with seminorm given by 
\[
[\phi]_{C^\alpha} = \sup_{x\neq y \in \TT^2} \frac{|\phi(x)  - \phi(y)|}{|x-y|^{\alpha}}
\]
and norm $\| \phi\|_{C^\alpha} = \|\phi\|_{L^\infty} + [\phi]_{C^\alpha}$.

Recall cf.~\cite{CalderonZygmund54,Shapiro64,SteinWeiss71} that for $\phi \in C^\infty(\TT^2)$ the periodic Riesz transforms $\RSZ_j$ may be defined in terms of their Fourier multiplier symbol
$
\hat{\RSZ_j \phi}_k = i k_j |k|^{-1} \hat{\phi}_k,
$
for all $k \in \ZZ_*^2$. Alternatively this is defined as the singular integral
\begin{align}
\RSZ_j \phi(x) = P.V. \int_{\TT^2} \phi(x+y) R_j^*(y) dy \label{eq:Riesz}
\end{align}
where the periodic Riesz transform kernel $R_j^*$ is given by
\begin{align}
R_j^*(y) = R_j(y) + \sum_{k \in \ZZ^2_*} \left( R_j(x + 2 \pi k) - R_j(2\pi k) \right)
\label{eq:periodic:Rj}
\end{align}
for $y \neq 2\pi \ZZ^2$, and $R_j$ is the whole space Riesz-transform kernel given by
\begin{align}
R_j(y) = \frac{\Omega_j(y/|y|)}{|y|^2} = \frac{y_j}{2 \pi |y|^3} \label{eq:Riesz:kernel}
\end{align}
for $y\neq 0$. The explicit form of the $\Omega_j$ is not important: it is a {\em smooth} function  which has {\em zero mean on ${\mathbb S}^1$}. Note that if we extend $\phi$ periodically to $\RR^2$, we may rewrite \eqref{eq:Riesz} as 
\begin{align}
\RSZ_j \phi(x) = P.V. \int_{\RR^2} \phi(x+y) R_j(y) dy \label{eq:Riesz:extended}
\end{align}
where the principal value is both as $|y| \to 0$ and $|y| \to \infty$. See e.g.~\cite[pp.~256--261]{CalderonZygmund54}, or \cite[Chapter VII]{SteinWeiss71} for a proof.

\begin{remark}[\bf Constants] 
We use the following convention regarding constants: 
\begin{itemize}
\item $C$ shall denote a positive, sufficiently large constant, whose value may change from line to line; $C$ is allowed to depend on the size of the box and other universal constants which are fixed throughout the paper; to emphasize the dependence of a constant on a certain quantity $Q$ we write $C_Q$ or $C(Q)$; 
\item $c,c_0,c_1,...$ shall denote fixed constants appearing in the estimates, that have to be referred to specifically;
again, to emphasize dependence on a certain quantity $Q$, we write $c_Q$ or $c(Q)$;
\end{itemize}
\end{remark}

The functional analytic characterization the fractional Laplacian $\Lambda^\alpha$ as the Fourier multiplier with symbol $|k|^\alpha$ turns out to be useful for estimates in $L^2$-based Sobolev spaces, but not for pointwise in $x$ estimates. For this purpose, we recall the kernel representation of the periodic fractional Laplacian~\cite{CordobaCordoba04}, see also~\cite{DiNeazzaPalatucciValdinoci11,RoncalStinga12}. Note that other very useful characterizations are available, see e.g.~\cite{CaffarelliSilvestre07} to obtain monotonicity formulae. 

For $\alpha \in (0,2)$ and $\phi \in C^\infty(\TT^2)$ we have the pointwise definition
\begin{align*}
\Lambda^\alpha \phi(x) = P.V. \int_{{\mathbb{T}}^2} \left(  \phi (x)- \phi (x+y) \right) K_\alpha(y) dy
\end{align*}
where kernel $K_\alpha$ is defined on $\TT^2 \setminus\{0\}$ as
\begin{align}
K_\alpha(y) = c_{\alpha} \sum_{k \in \ZZ^2} \frac{1}{|y- 2\pi k|^{2+\alpha}} \label{eq:K:alpha}
\end{align}
and the normalization constant is 
\begin{align}
c_{\alpha} = \frac{2^\alpha \Gamma( 1 + \alpha/2)}{|\Gamma(-\alpha/2)| \pi }.\label{eq:c:alpha}
\end{align}
Under this normalization one has that $\lim_{\alpha \to 2-} \Lambda^\alpha \phi = - \Delta  \phi$, and $\lim_{\alpha \to 0+} \Lambda^\alpha \phi= \phi - \bar \phi$, pointwise in $x \in \TT^2$, where $\bar \phi$ is the mean of $\phi$ over $\TT^2$. When $\alpha \in (0,1)$ the above definition is valid for $\phi \in C^{\alpha+\eps}$, while for $\alpha \in [1,2)$ we need that $\phi \in C^{1,\alpha-1+\eps}$, for some $\eps>0$.

We recall the following two statements, which we  use frequently throughout the paper.

\begin{proposition}[\bf Pointwise identity]\label{prop:pointwise}
Let $\alpha \in (0,2)$ and $\phi \in C^\infty(\TT^2)$. Then we have that 
\begin{align} 
2 \phi(x) \Lambda^\alpha \phi(x) = \Lambda^\alpha (\phi(x)^2) + D_\alpha[\phi](x)
\label{eq:pointwise:identity}
\end{align}
holds, where 
\begin{align} 
D_\alpha[\phi](x) 
&= P.V. \int_{\TT^2} (\phi(x) - \phi(x+y))^2 K_\alpha(y) dy \notag\\
&= P.V. \int_{\RR^2} ( \phi(x) - \phi(x+y))^2 \frac{c_\alpha}{|y|^{2+\alpha}} dy 
\label{eq:D:alpha:def}
\end{align}
pointwise for $x\in \TT^2$, and we denote by $ \phi$ the periodic extension of $\phi$ to all of $\RR^2$.
\end{proposition}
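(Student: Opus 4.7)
The plan is to prove the identity by combining the pointwise kernel representation of $\Lambda^\alpha$ recalled just above Proposition~\ref{prop:pointwise} with the elementary algebraic identity
\begin{equation*}
2a(a-b) = (a^2 - b^2) + (a-b)^2.
\end{equation*}
Applying this with $a = \phi(x)$ and $b = \phi(x+y)$ yields, for each fixed $x$ and each $y \in \TT^2 \setminus \{0\}$,
\begin{equation*}
2\phi(x)\bigl(\phi(x) - \phi(x+y)\bigr) = \bigl(\phi(x)^2 - \phi(x+y)^2\bigr) + \bigl(\phi(x) - \phi(x+y)\bigr)^2.
\end{equation*}
I then multiply this identity by the kernel $K_\alpha(y)$ and integrate (in the principal value sense) over $y \in \TT^2$.

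On the left side the factor $2\phi(x)$ is constant in $y$ and can be pulled out of the integral, producing $2\phi(x) \Lambda^\alpha \phi(x)$ by the pointwise definition. The first bracket on the right is $\phi^2(x) - \phi^2(x+y)$, which integrated against $K_\alpha(y)$ gives precisely $\Lambda^\alpha(\phi^2)(x)$, again by the pointwise formula applied to the smooth function $\phi^2 \in C^\infty(\TT^2)$. The second bracket integrated against $K_\alpha(y)$ gives the first expression for $D_\alpha[\phi](x)$ in \eqref{eq:D:alpha:def}. Matching terms produces \eqref{eq:pointwise:identity}. One small point worth checking is that splitting the principal value across the three summands is legitimate: the integrand $(\phi(x)-\phi(x+y))^2 K_\alpha(y)$ is in fact absolutely integrable near $y=0$ (the squared difference vanishes like $|y|^2$ while $K_\alpha(y) \sim c_\alpha |y|^{-2-\alpha}$ with $\alpha \in (0,2)$, leaving an integrable $|y|^{-\alpha}$ singularity), so only the two terms that do require the principal value need to be handled together, and they combine precisely into $\Lambda^\alpha(\phi^2)(x)$.

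For the second representation of $D_\alpha[\phi](x)$ in \eqref{eq:D:alpha:def} as an integral over $\RR^2$, I would use the series definition \eqref{eq:K:alpha} of $K_\alpha$ together with the $2\pi\ZZ^2$-periodicity of $\phi$ (via its periodic extension introduced earlier in the excerpt). Writing
\begin{equation*}
\int_{\TT^2} \bigl(\phi(x)-\phi(x+y)\bigr)^2 K_\alpha(y)\, dy = c_\alpha \sum_{k \in \ZZ^2} \int_{\TT^2} \bigl(\phi(x) - \phi(x+y)\bigr)^2 \frac{dy}{|y - 2\pi k|^{2+\alpha}},
\end{equation*}
then substituting $z = y - 2\pi k$ in the $k$th summand and using that $\phi(x+y) = \phi(x+z)$ by periodicity, the summation over translates of $\TT^2$ tiles $\RR^2$ and collapses into a single integral over $\RR^2$ with kernel $c_\alpha/|z|^{2+\alpha}$. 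The absolute integrability established above (extended to $|z|\to \infty$ using $\phi \in L^\infty$ and $\alpha > 0$) justifies the interchange of sum and integral, completing the proof.

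The only genuine obstacle is book-keeping with the principal value; the algebraic content is the three-line identity above.
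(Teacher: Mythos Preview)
Your proof is correct and is precisely the standard argument. The paper does not actually give its own proof of this proposition: it cites \cite{CordobaCordoba04} (and \cite{Constantin06,ConstantinVicol12,Toland00}) for the identity \eqref{eq:pointwise:identity}, and for the second equality in \eqref{eq:D:alpha:def} it simply remarks that it ``follows from Fubini's theorem, a change of variables and the Dominated Convergence theorem'' --- which is exactly the tiling argument you wrote out, with your absolute integrability check playing the role of the dominated convergence hypothesis.
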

Identity \eqref{eq:pointwise:identity} was proven in~\cite{CordobaCordoba04} (see also~\cite{Constantin06,ConstantinVicol12}). The same identity was used in \cite[Lemma 3.1]{Toland00} for the periodic case in one dimension, in the context of Stokes waves. In \cite{CordobaCordoba04} the pointwise estimate $f'(\phi) \Lambda^\alpha \phi - \Lambda^\alpha f(\phi) \geq 0$ was established for functions $f$ which are non-decreasing and convex. The second equality in \eqref{eq:D:alpha:def} follows from Fubini's theorem, a change of variables and the Dominated Convergence theorem.

\begin{proposition}[\bf Fractional Laplacian in $L^p$]\label{prop:Poincare}
Let $p=4q$, $q\ge 1$, $0 \leq \alpha \leq 2$, and let $\phi \in C^\infty$ have zero mean on ${\mathbb T}^d$. Then 
\begin{align}
\int_{\TT^d} \theta^{p-1}(x) \Lambda^\alpha \theta(x) dx \geq \frac{1}{p} \| \Lambda^{\alpha/2}  ( \theta^{p/2}) \|_{L^2}^2 +\frac{1}{C_{\alpha,d}} \|\theta\|_{L^p}^p 
\label{eq:Lp:Poincare}
\end{align}
holds, with an explicit constant $C_{\alpha,d} \geq 1$, which is independent of $p$.
\end{proposition}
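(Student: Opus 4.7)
The plan is to establish the inequality as the average of two lower bounds for $\int \theta^{p-1}\Lambda^\alpha\theta\, dx$: a refined Cordoba--Cordoba estimate (which supplies the $\|\Lambda^{\alpha/2}\theta^{p/2}\|_{L^2}^2$ term) and a fractional Poincar\'e estimate in $L^p$ with a constant that is \emph{uniform in} $p$ (which supplies the $\|\theta\|_{L^p}^p$ term). The starting point for both is the symmetric kernel representation obtained by applying Proposition~\ref{prop:pointwise}-style reasoning with $\phi^p$ in place of $\phi^2$ and using that $\int_{\TT^d}\Lambda^\alpha\psi\, dx = 0$ for any smooth periodic $\psi$: after symmetrizing $x \leftrightarrow x+y$ one finds
\begin{align*}
\int_{\TT^d}\theta^{p-1}(x)\Lambda^\alpha\theta(x)\, dx = \frac{1}{2}\int_{\TT^d}\!\!\int_{\TT^d}(\theta(x)-\theta(x+y))(\theta(x)^{p-1}-\theta(x+y)^{p-1})K_\alpha(y)\, dy\, dx.
\end{align*}

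For the first ingredient I would prove the pointwise algebraic inequality $(a-b)(a^{p-1}-b^{p-1}) \geq \tfrac{4(p-1)}{p^2}(a^{p/2}-b^{p/2})^2$ valid for all $a,b\in\RR$ (both sides are well-defined and non-negative since $p = 4q$ is a positive even integer). Writing $F(s) = sa+(1-s)b$, the fundamental theorem of calculus gives $a^{p-1}-b^{p-1} = (p-1)(a-b)\int_0^1 F(s)^{p-2}\, ds$ and $a^{p/2}-b^{p/2} = \tfrac{p}{2}(a-b)\int_0^1 F(s)^{p/2-1}\, ds$; the Cauchy--Schwarz inequality applied to the latter integral versus $\int_0^1 F(s)^{p-2}\, ds$ yields the claim. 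Since $\tfrac{4(p-1)}{p^2} \geq \tfrac{2}{p}$ for $p\ge 2$, and since $\int\!\!\int (\theta(x)^{p/2}-\theta(x+y)^{p/2})^2 K_\alpha(y)\, dy\, dx = 2\|\Lambda^{\alpha/2}\theta^{p/2}\|_{L^2}^2$ by \eqref{eq:D:alpha:def}, one obtains the intermediate bound
\[
\int_{\TT^d}\theta^{p-1}\Lambda^\alpha\theta\, dx \;\geq\; \frac{2}{p}\,\|\Lambda^{\alpha/2}\theta^{p/2}\|_{L^2}^2.
\]

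For the second ingredient I would exploit that the kernel $K_\alpha$ from \eqref{eq:K:alpha} is continuous and strictly positive on $\TT^d\setminus\{0\}$; retaining only the $k=0$ term of the series gives $K_\alpha(y) \geq c(\alpha,d) > 0$ uniformly on $\TT^d\setminus\{0\}$. Substituting this pointwise lower bound into the symmetric formula for $\int\theta^{p-1}\Lambda^\alpha\theta$, expanding the bilinear product, and using crucially that $\int_{\TT^d}\theta\, dx = 0$ makes the cross terms $\int\!\!\int \theta(x)\theta(x+y)^{p-1}\, dy\, dx = (\int\theta)(\int\theta^{p-1}) = 0$ vanish (and similarly for the other cross term), what remains is $2|\TT^d|\|\theta\|_{L^p}^p$. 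This yields the $L^p$ Poincar\'e-type estimate
\[
\int_{\TT^d}\theta^{p-1}\Lambda^\alpha\theta\, dx \;\geq\; c(\alpha,d)\,|\TT^d|\,\|\theta\|_{L^p}^p,
\]
with a constant \emph{independent of $p$}. Averaging the two bounds then gives the proposition with $C_{\alpha,d} = 2/(c(\alpha,d)|\TT^d|)$, since $\tfrac{1}{2}\cdot\tfrac{2}{p} = \tfrac{1}{p}$.

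The delicate point, and the one I expect to be the main obstacle, is producing an $L^p$ Poincar\'e constant that does not degenerate as $p\to\infty$. A naive approach using the ordinary $L^2$ Poincar\'e inequality applied to $g = \theta^{p/2}$ fails, because $\bar g = \overline{|\theta|^{p/2}}$ can in principle be comparable to $\|g\|_{L^2}$, and therefore controls only the mean-zero part $\|g-\bar g\|_{L^2}$ rather than $\|g\|_{L^2}^2 = \|\theta\|_{L^p}^p$. The workaround above bypasses this entirely by using the pointwise lower bound on $K_\alpha$ together with the cancellation of cross terms forced by $\int \theta\, dx = 0$; this cancellation is what encodes the $p$-uniformity. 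The endpoint cases $\alpha = 0$ (where $\Lambda^0\theta = \theta$ for mean-zero $\theta$ and the inequality is elementary) and $\alpha = 2$ (where $K_\alpha$ degenerates and one instead expands $\int\theta^{p-1}(-\Delta\theta) = \tfrac{4(p-1)}{p^2}\|\nabla\theta^{p/2}\|_{L^2}^2$ and combines with a standard Poincar\'e argument) are handled separately.
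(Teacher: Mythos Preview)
Your proof is correct and follows the same overall architecture as the paper's: start from the symmetrized kernel identity, use a pointwise lower bound $K_\alpha(y)\ge c(\alpha,d)>0$ on $\TT^d\setminus\{0\}$, and exploit $\int\theta=0$ to kill the cross terms that arise. The algebraic packaging differs, however. The paper writes the exact identity
\[
\int\theta^{p-1}\Lambda^\alpha\theta
=\frac{1}{p}\|\Lambda^{\alpha/2}\theta^{p/2}\|_{L^2}^2
+\frac{1}{2p}\iint f_p(\theta(x),\theta(y))K_\alpha(x-y)\,dy\,dx,
\]
with $f_p(a,b)=p(a-b)(a^{p-1}-b^{p-1})-2(a^{p/2}-b^{p/2})^2$, and then invokes the pointwise inequality $f_p(a,b)\ge\tfrac{p-2}{2}(a-b)^2 a^{p-2}$ (stated without proof). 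Expanding $(a-b)^2a^{p-2}$ produces a single cross term $-2a^{p-1}b$, killed by zero mean, and a discarded nonnegative term $a^{p-2}b^2$. Your route instead proves two separate lower bounds and averages them; the first uses the clean Cauchy--Schwarz argument to get $(a-b)(a^{p-1}-b^{p-1})\ge\tfrac{4(p-1)}{p^2}(a^{p/2}-b^{p/2})^2$, and the second expands $(a-b)(a^{p-1}-b^{p-1})$ directly, with \emph{both} cross terms $ab^{p-1}$ and $a^{p-1}b$ vanishing by zero mean. Your version is a little more self-contained (no unproved calculus lemma for $f_p$), while the paper's version keeps the $\tfrac1p\|\Lambda^{\alpha/2}\theta^{p/2}\|^2$ term with its sharp constant via an identity rather than losing a factor of two to averaging.

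One minor caution: at the endpoint $\alpha=2$, both your sketch and the paper's (``follows upon integration by parts'') are brief. Since $\theta^{p/2}$ is not mean-zero, the ordinary Poincar\'e inequality on $\|\nabla\theta^{p/2}\|_{L^2}^2$ does not immediately yield $\|\theta\|_{L^p}^p$ with a $p$-independent constant---this is precisely the obstacle you flag for the naive approach. The kernel argument does not apply as written because $c_\alpha\to 0$ as $\alpha\to 2^-$. This case needs a separate argument (e.g.\ reproducing the cross-term cancellation using $\int\theta^{p-1}(-\Delta\theta)=(p-1)\int\theta^{p-2}|\nabla\theta|^2$ together with a representation of $|\nabla\theta|^2$ that sees the zero mean of $\theta$), but since the proposition is only used in the paper with $\alpha=1$, this is a cosmetic issue.
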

When the second term on the right of \eqref{eq:Lp:Poincare} is absent, the above statement was proven in~\cite{CordobaCordoba04}. 
Since for $p=4q$, $q\ge 1$,  $\theta^{p/2}$ is not of zero mean, it is not immediately clear that the first term on the right of \eqref{eq:Lp:Poincare} dominates the $p$th power of the $L^p$ norm. (In contradistinction with the case  $p=2$, which is the classical Poincar\'{e} inequality). The proof of Proposition~\ref{prop:Poincare} to our knowledge was first given in~\cite[Appendix A]{ConstantinGlattHoltzVicol13}. For the sake of convenience we include a sketch of the proof in Appendix~\ref{app:calculus} below.

\section{Global regularity for the forced critical Burgers equation} \label{sec:Burgers}
In order to present the main idea of the proof of global regularity for the critical forced SQG equation, we first
consider the one dimensional critical forced Burgers equation
\begin{align}
\partial_t \theta + \theta \partial_x \theta + \Lambda \theta = f \label{eq:Burgers}
\end{align}
on a periodic domain $\TT=[-\pi,\pi]$, with smooth force, and smooth initial data $\theta_0$. We assume that the data and the force have zero mean on $\TT$, so that the same holds for the solution $\theta$ at later times.

First notice that we have a global in time control on $L^p$ norms of the solution. Since 
\begin{align*}
\int_{\TT} \theta \partial_x \theta \theta^{p-1} dx = \frac{1}{p+1} \int_{\TT} \partial_x (\theta^{p+1}) dx = 0
\end{align*}
we multiply \eqref{eq:Burgers} with $\theta^{p-1}$, integrate over $\TT$, and use the the lower bound in Proposition~\ref{prop:Poincare} to obtain 
\begin{align*}
\frac{d}{dt} \|\theta\|_{L^p} + c \|\theta\|_{L^p} \leq \|f\|_{L^p}
\end{align*}
for all $p\geq 2$ even, where the constant $c$ is independent of $p$. 
Integrating in time and then passing $p\to \infty$ thus yields
\begin{align}
\|\theta(t)\|_{L^\infty} \leq \|\theta_0\|_{L^\infty}  + \frac{1}{c} \|f\|_{L^\infty}= :B_\infty 
\label{eq:Burgers:Linfty}
\end{align}
for all $t\geq 0$. See also Proposition~\ref{prop:Lp} below for similar estimates for the SQG equation.

Our goal is to give a global in (positive) time bound for the $C^\alpha$ norm of the solution, for some $\alpha \in (0,1)$. It is well-known that due to the critical power of the dissipation, such a bound would in turn imply that the solution cannot develop singularities in finite time.

Let $\alpha \in (0,\alpha_0]$, where $\alpha_0 \in (0,1)$ is to be determined later in terms of the initial data and the force. In order to study the propagation of H\"older continuity in \eqref{eq:Burgers} we study the evolution of 
\begin{align}
v(x,t;h) = \frac{|\delta_h \theta(x)|}{|h|^\alpha} \notag
\end{align}
where $\delta_h \theta(x) = \theta(x+h) - \theta(x)$ is the usual finite difference. Note that a bound on $
\sup_{x,h \in \TT^2} |v(t,x;h)|
$
is in fact equivalent to a bound on $[\theta(t)]_{C^\alpha}$. Evaluating \eqref{eq:Burgers} at $x+h$ and $x$ and taking the difference one obtains
\begin{align}
\left( \partial_t + \theta \partial_x + (\delta_h \theta) \partial_h + \Lambda_x \right) (\delta_h \theta) = (\delta_h f).
\notag
\end{align}
Multiplying by $|h|^{-2\alpha} (\delta_h \theta)$ and appealing to Proposition~\ref{prop:pointwise} we thus arrive at the pointwise inequality
\begin{align}
\left( \partial_t + \theta \partial_x + (\delta_h \theta) \partial_h + \Lambda_x \right) v^2  + \frac{D[\delta_h \theta]}{|h|^{2\alpha}}
= \frac{4\alpha}{|h|^{2\alpha+1}} (\delta_h \theta)^3 + \frac{2 (\delta_h f) }{|h|^\alpha} v  
&\leq \frac{4\alpha}{|h|^{1-\alpha}} v^3 + \frac{4 \|f\|_{L^\infty}}{|h|^\alpha} v
\label{eq:Burgers:2}
\end{align}
where 
\begin{align}
D[\delta_h \theta] = c P.V. \int(\delta_h \theta(x) - \delta_h \theta(x+y) )^2 \frac{1}{|y|^2} dy.
\notag
\end{align} 
The main idea is to combine a nonlinear lower bound for $D[\delta_h\theta]$, with the smallness of $\alpha$ to obtain an ODE for $\| v(t)\|_{L^\infty_{t,x}}$, which has global bounded solutions. 

Using the argument in \cite{ConstantinVicol12}, see also \eqref{eq:D:h:lower} below, we have that
\begin{align}
D[\delta_h\theta](x) \geq \frac{|\delta_h \theta(x)|^3}{C \|\theta\|_{L^\infty} |h|}
\notag
\end{align}
for some $C>0$. Therefore, using \eqref{eq:Burgers:Linfty} we arrive at
\begin{align}
\frac{D[\delta_h \theta]}{|h|^{2\alpha}} \geq \frac{|\delta_h \theta(x)|^3}{C \|\theta\|_{L^\infty} |h|^{1+2\alpha}} = \frac{v^3}{C B_\infty |h|^{1-\alpha}}.
\notag
\end{align}	
Inserting the above bound in \eqref{eq:Burgers:2} yields
\begin{align}
\left( \partial_t + \theta \partial_x + (\delta_h \theta) \partial_h + \Lambda_x \right) v^2  + \frac{v^3}{C_0 B_\infty |h|^{1-\alpha}}
&\leq \frac{4\alpha v^3}{|h|^{1-\alpha}}  + \frac{4 \|f\|_{L^\infty}v}{|h|^\alpha}.
\label{eq:Burgers:3}
\end{align}
Therefore if we choose $\alpha_0$ such that  
\begin{align}
\alpha_0 \leq \frac{1}{8 C_0 B_\infty}
\notag
\end{align}
the nonlinear term on the right side of \eqref{eq:Burgers:3} can be absorbed into the left side of the inequality, and moreover, once we appeal to the $\eps$-Young inequality in order to hide the forcing term in the dissipation, we arrive at 
\begin{align}
\left( \partial_t + \theta \partial_x + (\delta_h \theta) \partial_h + \Lambda_x \right) v^2 + \frac{v^3}{4 C_0 B_\infty |h|^{1-\alpha}} \leq C_{1} B_\infty^{1/2} \|f\|_{L^\infty}^{3/2} |h|^{\frac{1-4\alpha}{2}}. \label{eq:Burgers:4}
\end{align}
for some $C_{1} >0$ which depends only on $C_0$.
Formally evaluating \eqref{eq:Burgers:4} at a point $(\bar x,\bar h)$ where $v^2$ attains its maximal value, and noting that such a point must necessarily obey $| \bar h|\leq \pi$ (the latter is due to the periodicity in $h$ of $\delta_h \theta$, and the strictly decaying nature of $|h|^{-\alpha}$), we obtain
\begin{align}
(\partial_t v^2)(\bar x,\bar h) + \frac{v^3(\bar x,\bar h)}{4 C_0 B_\infty \pi^{1-\alpha}}  
&\leq \left( \partial_t + \theta \partial_x + (\delta_h \theta) \partial_h + \Lambda_x \right) v^2(\bar x,\bar h) + \frac{v^3(\bar x,\bar h)}{4 C_0 B_\infty \pi^{1-\alpha}}  \notag\\
&\leq C_{1} B_\infty^{1/2} \|f\|_{L^\infty}^{3/2} \pi^{\frac{1-4\alpha}{2}},
\label{eq:Burgers:5}
\end{align}
as long as we impose 
\begin{align}
\alpha_0 \leq \frac 14.
\notag
\end{align}
In \eqref{eq:Burgers:5} we used that at the maximum (in joint $x$ and $h$) of $v^2$ we must have $\partial_x v^2 = \partial_h v^2=0$, and $\Lambda v^2 \geq 0$.
One can then rigorously show that on for almost every $t$ in $[0,T_*)$, the maximal time of existence of a smooth solution to the initial value problem associated to \eqref{eq:Burgers}, we have
\begin{align}
\frac{d}{dt} \|v(t)\|_{L^\infty_{x,h}}^2 \leq (\partial_t v^2)(\bar x,\bar h)
\notag
\end{align}
which combined with \eqref{eq:Burgers:5} yields that for any  $\alpha \in (0,\alpha_0]$, with $\alpha_0 = \min\{1/ (8C_0 B_\infty), 1/4\}$, we have
\begin{align}
[\theta(t)]_{C^\alpha} = \|v(t)\|_{L^\infty_{x,h}} \leq \max\left\{ [\theta_0]_{C^\alpha} , (4C_0C_1)^{1/2} \pi^{\frac{1-2\alpha}{2}} B_\infty^{1/2} \|f\|_{L^\infty}^{1/2} \right\}
\notag
\end{align}
for all $ t \in [0,T_*)$, and thus a posteriori for all $t \geq 0$, thereby completing the proof of global regularity for critical Burgers. 

Notice that the key ingredients were the nonlinear lower bound on $\Lambda$, and the smallness of $\alpha$. This argument carries over to the SQG case modulo some technical issues having to do with the fact that the velocity depends linearly but in a nonlocal fashion on $\theta$. We give details  in Section~\ref{sec:global} below, where we also fully justify the arguments presented here only formally for clarity of the exposition.

\section{Global regularity for forced critical SQG} \label{sec:global}

In this section we give a new proof of global existence and uniqueness of solutions for \eqref{eq:SQG:1}--\eqref{eq:SQG:3} which has the advantage that the {bounds} require merely $f \in L^\infty \cap H^1$. The proof is based on the nonlinear lower bound for the fractional Laplacian discovered in~\cite{ConstantinVicol12}. We first recall some $L^p$ estimates for solutions of the forced critical SQG equation.

\begin{proposition}[\bf Absorbing ball in $L^p$]\label{prop:Lp}
Let $\theta$ be a smooth solution of \eqref{eq:SQG:1}--\eqref{eq:SQG:3}, and let $p\geq 2$ be even. Then we have
\begin{align} 
\| \theta(\cdot,t)\|_{L^p} \leq \|\theta_0\|_{L^p} e^{-t c_0 \kappa} + \frac{1}{c_0 \kappa} \| f\|_{L^p} (1- e^{-t c_0 \kappa})
\label{eq:Lp:decay}
\end{align}
for some universal constant $c_0$. Moreover,
\begin{align} 
\| \theta(\cdot,t)\|_{L^\infty} \leq \|\theta_0\|_{L^\infty} e^{-t c_0 \kappa} + \frac{1}{c_0 \kappa} \| f\|_{L^\infty} (1- e^{-t c_0 \kappa})
\label{eq:Linfty:decay}
\end{align}
holds with the same universal constant $c_0$. 
\end{proposition}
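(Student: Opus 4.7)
The plan is to derive a differential inequality for $\|\theta(\cdot,t)\|_{L^p}$ of the form
\[
\frac{d}{dt}\|\theta\|_{L^p} + c_0\kappa\,\|\theta\|_{L^p} \leq \|f\|_{L^p},
\]
with a constant $c_0$ that is \emph{independent of $p$}, and then integrate it to obtain both \eqref{eq:Lp:decay} and (by passing $p\to\infty$) \eqref{eq:Linfty:decay}.

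First I would multiply \eqref{eq:SQG:1} by $p\theta^{p-1}$ and integrate over $\TT^2$. The transport term vanishes because $u = \RSZ^\perp\theta$ is divergence-free:
\[
\int_{\TT^2} p\theta^{p-1}\, u\cdot\nabla\theta\, dx = \int_{\TT^2} u\cdot\nabla(\theta^p)\, dx = -\int_{\TT^2} (\ddiv u)\,\theta^p\, dx = 0.
\]
For the forcing term, H\"older's inequality gives $p\int f\theta^{p-1}\,dx \leq p\|f\|_{L^p}\|\theta\|_{L^p}^{p-1}$. For the dissipation I apply Proposition~\ref{prop:Poincare} with $\alpha = 1$, which for $p = 4q$, $q \geq 1$, yields
\[
p\kappa\int_{\TT^2} \theta^{p-1}\Lambda\theta\, dx \geq \kappa\|\Lambda^{1/2}(\theta^{p/2})\|_{L^2}^2 + \frac{p\kappa}{C_{1,2}}\|\theta\|_{L^p}^p \geq c_0\,p\kappa\,\|\theta\|_{L^p}^p,
\]
where the constant $c_0 = 1/C_{1,2}$ is the crucial point: Proposition~\ref{prop:Poincare} asserts it is independent of $p$. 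Combining these, dividing by $p\|\theta\|_{L^p}^{p-1}$, and absorbing the factor $p$ into the chain rule $\frac{d}{dt}\|\theta\|_{L^p}^p = p\|\theta\|_{L^p}^{p-1}\frac{d}{dt}\|\theta\|_{L^p}$ gives the sought ODE.

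Gronwall's lemma applied to this ODE produces \eqref{eq:Lp:decay} for $p$ divisible by $4$. For a general even $p$, I would either interpolate between two consecutive powers in the sequence $\{4q\}_{q\geq 1}$ (plus the base case $p=2$, where $\int \theta\Lambda\theta\,dx = \|\Lambda^{1/2}\theta\|_{L^2}^2 \geq \lambda_1 \|\theta\|_{L^2}^2$ gives the same bound directly with constant $\lambda_1 = 1$), or observe that the $p$-uniform nature of the estimate is really only needed for the passage $p \to \infty$. For \eqref{eq:Linfty:decay}, I take the limit $p\to\infty$ along $p = 4q$ in \eqref{eq:Lp:decay}, using that $\|g\|_{L^p} \to \|g\|_{L^\infty}$ as $p \to \infty$ for any bounded $g$ on $\TT^2$, and that the right-hand side depends on $p$ only through the norms of $\theta_0$ and $f$ (and the universal constant $c_0$).

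The main obstacle is verifying that the dissipative constant $c_0$ produced from Proposition~\ref{prop:Poincare} genuinely does not degenerate with $p$; once this is secured, the argument is essentially the standard testing-with-$\theta^{p-1}$ routine. A minor technical point to address is that the formal computation above assumes smoothness of $\theta$; since the statement concerns smooth solutions this is not an issue, but if one wanted to extend \eqref{eq:Linfty:decay} to merely bounded initial data, one would need to check that the constant $c_0$ from the limit $p\to\infty$ matches the constant from a direct $L^\infty$ maximum principle argument applied at a maximum of $|\theta|$.
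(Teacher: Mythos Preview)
Your proposal is correct and follows essentially the same approach as the paper: multiply by $\theta^{p-1}$, use the $L^p$ Poincar\'e inequality (Proposition~\ref{prop:Poincare}) to obtain the differential inequality $\frac{d}{dt}\|\theta\|_{L^p} + c_0\kappa\|\theta\|_{L^p} \leq \|f\|_{L^p}$ with $c_0$ independent of $p$, integrate, and pass $p\to\infty$. You are in fact more careful than the paper about the discrepancy between the hypothesis $p=4q$ in Proposition~\ref{prop:Poincare} and the claim for all even $p\geq 2$ in Proposition~\ref{prop:Lp}; the paper's proof simply glosses over this, while you correctly note that the $p$-uniformity is only needed for the limit $p\to\infty$ (and the appendix proof of Proposition~\ref{prop:Poincare} in fact works for all even $p\geq 4$, with $p=2$ handled separately by Parseval).
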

\begin{proof}
Multiplying \eqref{eq:SQG:1} by $\theta^{p-1}$, and using Proposition~\ref{prop:Poincare} we arrive at
\begin{align*}
\frac{d}{dt} \|\theta\|_{L^p} + c _0  \kappa \|\theta\|_{L^p} \leq \|f\|_{L^p} 
\end{align*}
for some $c_0>0$ which is independent of $p$. Therefore we  obtain \eqref{eq:Lp:decay} for all $t\geq 0$ and $p\geq 2$ even. Moreover, since the constants appearing in \eqref{eq:Lp:decay} are independent of $p$, and we are on a periodic domain $\TT^2 = [-\pi,\pi]^2$, we have that 
\begin{align*}
\|\theta(\cdot,t)\|_{L^p} \leq (2 \pi)^{2/p} \|\theta_0\|_{L^\infty} e^{-t c_0 \kappa} + \frac{ (2 \pi )^{2/p}}{c_0  \kappa} \| f\|_{L^\infty} (1- e^{-t c_0 \kappa})
\end{align*}
which yields \eqref{eq:Linfty:decay} upon passing $p\to \infty$.
\end{proof}
In particular, Proposition~\ref{prop:Lp} shows that for any $p \in [2,\infty)$ even and $p = \infty$ we have that
\begin{align} 
\|\theta(\cdot,t)\|_{L^p} \leq  \|\theta_0\|_{L^p}  + \frac{1}{c_0 \kappa} \| f\|_{L^p} := M_p(\theta_0,f) = M_p\label{eq:Mp:def}
\end{align}
for any $t\geq 0$. Next, we recall the following local existence~\cite{Miura06,Ju07} and smoothing~\cite{Dong10}  result.
\begin{proposition}[\bf Local solution]\label{prop:local}
Assume $\theta_0 \in H^1$ and $f\in H^1 \cap L^\infty$. There exists $T_* = T_*(\theta_0,f)>0$ and a unique solution $\theta$ of the initial value problem \eqref{eq:SQG:1}--\eqref{eq:SQG:3} which obeys the energy inequality and 
\begin{align}
\theta \in C([0,T_*);H^1) \cap L^2(0,T_*;H^{3/2}).\label{eq:local:sol}
\end{align}
Moreover, for any $\beta\geq 0$, if $f \in H^\beta$ we have that 
\begin{align}
\sup_{0<t<T_*} t^{\beta} \|\theta(\cdot,t)\|_{H^{1+\beta}} \leq c_1 \kappa^{-\beta} \|\theta_0\|_{H^1} + c_1 \kappa^{-1} T_*^{\beta} \|f\|_{H^{\beta}} \label{eq:local:smoothing}
\end{align}
for some positive universal constant $c_1$. Additionally,  we have 
$
\lim_{t\to 0+} t^{\beta} \|\theta(\cdot,t)\|_{H^{1+\beta}} = 0,
$
when $\beta>0$.
\end{proposition}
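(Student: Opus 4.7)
The plan is to build a smooth approximate sequence, derive an $H^1$ a priori estimate that closes on a short time interval depending on $\theta_0$ and $f$, pass to the limit, and then extract the parabolic smoothing bound \eqref{eq:local:smoothing} directly from the equation. Concretely, let $\theta^{(n)}$ be the Galerkin projection of \eqref{eq:SQG:1}--\eqref{eq:SQG:3} onto Fourier modes $|k|\leq n$, with initial datum $P_n\theta_0$ and forcing $P_n f$. The Galerkin system is a smooth ODE, so $\theta^{(n)}$ exists classically on some $[0,T_n)$ and the task is to show that $T_n\geq T_*$ and that $\|\theta^{(n)}\|_{H^1}$ is bounded, uniformly in $n$, on $[0,T_*]$.

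For the key energy estimate, I apply $\Lambda$ to the equation, pair with $\Lambda \theta$, and integrate. Using $u=\RSZ^\perp\theta$, the Calder\'on--Zygmund bound $\|\nabla u\|_{L^4}\leq C\|\theta\|_{H^{3/2}}$, and the 2D Sobolev embedding $H^{1/2}\hookrightarrow L^4$ to get $\|\nabla\theta\|_{L^4}\leq C\|\theta\|_{H^{3/2}}$, the nonlinear term is bounded by $C\|\theta\|_{H^{3/2}}^2\|\theta\|_{H^1}$ after integration by parts (relying on $\nabla\cdot u=0$). Absorbing the forcing via $\varepsilon$-Young in the dissipation, one gets
\[
\frac{d}{dt}\|\theta\|_{H^1}^2 + \kappa\|\theta\|_{H^{3/2}}^2 \leq C\|\theta\|_{H^{3/2}}^2\|\theta\|_{H^1} + C\kappa^{-1}\|f\|_{H^{1/2}}^2.
\]
This cannot be closed from the $H^1$ norm alone, which is precisely why $T_*$ must depend on the function $\theta_0$ and not just its norm. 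To close, I split $\theta_0 = P_{\le N}\theta_0 + P_{>N}\theta_0$, choosing $N=N(\theta_0)$ so that the high-frequency $H^1$ tail is smaller than $\kappa/(4C)$; the low-frequency part sits in every $H^s$ with bounds depending on $N$. A continuity argument shows that the constructed solution stays within a fixed $H^1$ ball (centered at the low-frequency evolution) on an interval $[0,T_*]$, giving the desired uniform bound and an $L^2(0,T_*;H^{3/2})$ bound. Compactness (Aubin-Lions) yields a limit $\theta\in C([0,T_*);H^1)\cap L^2(0,T_*;H^{3/2})$ solving the equation and satisfying the energy inequality.

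Uniqueness follows from an $L^2$ difference estimate: for two solutions $\theta_1,\theta_2$ with $w=\theta_1-\theta_2$, the transport nonlinearity yields $\frac{d}{dt}\|w\|_{L^2}^2 + \kappa\|w\|_{H^{1/2}}^2 \leq C(\|\theta_1\|_{H^{3/2}}^2+\|\theta_2\|_{H^{3/2}}^2)\|w\|_{L^2}^2$, which is Gr\"onwall-integrable thanks to the $L^2_tH^{3/2}_x$ bound from the existence step. For the smoothing estimate \eqref{eq:local:smoothing}, the $L^2(0,T_*;H^{3/2})$ control guarantees that $\theta(t_1)\in H^{3/2}$ for some $t_1\in(0,T_*)$ arbitrarily close to $0$. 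I then propagate $H^{1+\beta}$ regularity: differentiate $t^{2\beta}\|\theta(t)\|_{H^{1+\beta}}^2$ in time, estimate the nonlinear contribution using Calder\'on--Zygmund and the same product estimates as above, and absorb into the $H^{3/2+\beta}$ dissipation. The extra $t^{2\beta}$ factor produces a forcing term $2\beta t^{2\beta-1}\|\theta\|_{H^{1+\beta}}^2$, which together with the polynomial growth of $\|f\|_{H^\beta}\|\theta\|_{H^{1+\beta}}\cdot t^{2\beta}$ gives (4.6) after Gr\"onwall, while the vanishing at $t=0$ follows from the density of $C^\infty$ in $H^1$ applied to the approximate solutions (whose $t^\beta\|\cdot\|_{H^{1+\beta}}$ is continuous and zero at $t=0$ trivially).

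The main obstacle is the first step: securing a uniform-in-$n$ $H^1$ bound on an interval $[0,T_*]$ without a quantitative dependence of $T_*$ solely on $\|\theta_0\|_{H^1}$. The frequency-splitting of the initial datum is essential here, since the borderline scaling of the nonlinear $H^1$ estimate rules out any absorbable small factor otherwise. Once this step is in place, the remaining compactness, uniqueness via weak-strong, and parabolic smoothing arguments are standard and follow the scheme of~\cite{Miura06,Ju07,Dong10}.
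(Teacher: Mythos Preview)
The paper does not actually prove this proposition; it attributes the unforced case to \cite{Miura06,Ju07,Dong10}, asserts that the forcing modification is routine, and explicitly omits the details. Your sketch is precisely the strategy of those references---frequency splitting of $\theta_0$ to close the critical $H^1$ estimate (Miura/Ju), $L^2$ weak--strong uniqueness, and time-weighted energy estimates for smoothing (Dong)---so it is consistent with the paper's intended but unwritten argument.
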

We emphasize that the time of existence $T_*$ doesn't depend in a locally uniform way on $\|\theta_0\|_{H^1}$.
The above result was proven in the aforementioned works in the absence of a forcing term, but it is not difficult to verify that estimate \eqref{eq:local:smoothing} holds assuming $f$ is sufficiently smooth.  We omit these details. Note that the solution may be extended uniquely past $T_*$ assuming that a priori we know e.g. that $ \sup_{t \in [T_*/2,T_*)} [\theta(t,\cdot)]_{C^\alpha} < \infty$, for some $\alpha >0$.

The first result we obtain is the propagation of H\"older continuity for smooth solutions, which may then be applied to a sequence of solutions to a regularized problem, in order to obtain in the limit the corresponding result for weak solutions (see Theorem~\ref{thm:Holder:weak} below).

\begin{theorem}[\bf Propagation of H\"older regularity]\label{thm:Holder:propagation}
Let $\theta_0$ and $f$ be sufficiently smooth, $T>0$ be arbitrary, and let $\theta \in C^{1/2}((0,T);C^{1,1/2})$ be the unique classical solution of the initial value problem \eqref{eq:SQG:1}--\eqref{eq:SQG:3}. As in \eqref{eq:Mp:def} above, define \[ M_\infty = M_\infty (\theta_0,f) = \|\theta_0\|_{L^\infty} + (c_0 \kappa)^{-1} \|f\|_{L^\infty}.\]
There exists a sufficiently small {universal} constant  $\eps_0 >0$ such that for any $\alpha $ with
\begin{align}
0 < \alpha \leq \alpha_0 = \min \left\{ \frac{\eps_0 \kappa}{M_\infty} , \frac 14 \right\} \label{eq:alpha:0:def},
\end{align}
if $\theta_0 \in C^\alpha$ we have
\begin{align}
[\theta(t)]_{C^\alpha} \leq M_\alpha(t) \label{eq:global:Holder:bound}
\end{align}
for all $t \in [0,T]$, where $M_\alpha$ is the solution of the ordinary differential equation \eqref{eq:M:alpha:def} below. In particular, we have that 
\begin{align}
M_\alpha(t) \leq \max \left\{ [\theta_0]_{C^\alpha} , \frac{M_\infty}{\eps_0} \right\} \label{eq:M:alpha:global}
\end{align}
for any $t \in [0,T]$, and also 
\begin{align}
M_\alpha(t) \leq \frac{2 M_\infty}{\eps_0} \label{eq:M:alpha:long:time}
\end{align}
for all $t \geq t_\alpha = t_\alpha(M_\infty, [\theta_0]_{C^\alpha})$,  which is defined explicitly in \eqref{eq:t:alpha:def} below.
\end{theorem}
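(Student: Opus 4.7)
The plan is to adapt the Burgers blueprint of Section~\ref{sec:Burgers} to the nonlocal velocity of SQG, propagating the pointwise quantity $v(x,t;h) = |\delta_h \theta(x,t)|/|h|^\alpha$ whose $L^\infty_{x,h}$ norm coincides with $[\theta(t)]_{C^\alpha}$. Subtracting the SQG equation at $x+h$ from the one at $x$, using the linearity and translation invariance of $u = \RSZ^\perp \theta$, gives the $(x,h)$-transport equation
\begin{align*}
(\partial_t + u(x)\cdot\nabla_x + \delta_h u\cdot\nabla_h + \kappa\Lambda_x)\,\delta_h\theta = \delta_h f,
\end{align*}
in which the drift $(u(x),\delta_h u)$ is divergence-free in $(x,h)$. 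Multiplying by $2\delta_h\theta/|h|^{2\alpha}$, invoking Proposition~\ref{prop:pointwise} on the dissipation, and observing that $\delta_h u\cdot\nabla_h |h|^{-2\alpha} = -2\alpha\,\delta_h u\cdot h/|h|^{2\alpha+2}$ produces the pointwise identity
\begin{align*}
(\partial_t + u\cdot\nabla_x + \delta_h u\cdot\nabla_h + \kappa\Lambda_x)\,v^2 + \kappa\,\frac{D[\delta_h\theta]}{|h|^{2\alpha}} = \frac{2\,\delta_h\theta\,\delta_h f}{|h|^{2\alpha}} - \frac{2\alpha\, v^2\,\delta_h u\cdot h}{|h|^2}.
\end{align*}
The last term, absent in Burgers, is the new nonlocal obstruction.

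To match the bad terms against the dissipation, I would use the nonlinear lower bound of~\cite{ConstantinVicol12}, combined with the $L^\infty$ bound $\|\theta\|_{L^\infty}\leq M_\infty$ from Proposition~\ref{prop:Lp}, to get $\kappa D[\delta_h\theta]/|h|^{2\alpha} \geq c_*\kappa v^3/(M_\infty |h|^{1-\alpha})$. For the Riesz term I would exploit the identity $\delta_h u = \RSZ^\perp(\delta_h\theta)$ and split the periodic singular integral at scale $|h|$. Using the second-difference bound $|\delta_h\theta(x+y)-\delta_h\theta(x)|\leq 2[\theta]_{C^\alpha}\min(|y|^\alpha,|h|^\alpha)$ together with the principal-value cancellation of the periodic Riesz kernel, the inner piece contributes $\lesssim \alpha^{-1}[\theta]_{C^\alpha}|h|^\alpha$ and the outer piece $\lesssim[\theta]_{C^\alpha}|h|^\alpha(1+\log(\pi/|h|))$. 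The factor $2\alpha$ in front of the extra term cancels $\alpha^{-1}$, and the $\log$ loss is absorbed into a universal constant thanks to the smoothing of $v^2$ as $|h|\to 0$ (coming from $\theta\in C^{1,1/2}$). The net effect is that, at a maximum of $v^2$, the Riesz contribution is bounded by $C\|v\|_{L^\infty}^3/|h|^{1-\alpha}$, which together with the cubic forcing residual produced by Young's inequality (of the form $C\|f\|_{L^\infty}^{3/2}|h|^{(1-4\alpha)/2}$, bounded on $|h|\leq\pi$ when $\alpha\leq 1/4$) can be absorbed in half of the dissipation as soon as $\alpha\leq\eps_0\kappa/M_\infty$ for a small universal $\eps_0$ — precisely the content of~\eqref{eq:alpha:0:def}.

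The remaining step is to evaluate the pointwise inequality at a pair $(\bar x(t),\bar h(t))$ where $v^2(\cdot,t;\cdot)$ attains its supremum; such a pair exists on the compact set $\TT^2\times(\TT^2\setminus\{0\})$ since $v^2$ is continuous there, vanishes as $|h|\to 0$ (using $\theta\in C^{1,1/2}$ and $\alpha<1$), and is periodic in $h$, so one may further assume $|\bar h|\leq\pi$. At the maximum the transport terms $u\cdot\nabla_x v^2$ and $\delta_h u\cdot\nabla_h v^2$ vanish by first-order optimality, while $\Lambda_x v^2(\bar x,\bar h)\geq 0$ because $v^2$ attains its $x$-maximum at $\bar x$. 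The resulting scalar inequality
\begin{align*}
\frac{d}{dt}\|v(t)\|_{L^\infty_{x,h}}^2 + \frac{c\kappa}{M_\infty\,\pi^{1-\alpha}}\|v(t)\|_{L^\infty_{x,h}}^3 \leq C\|f\|_{L^\infty}^{3/2}\,\pi^{(1-4\alpha)/2}
\end{align*}
defines the majorizing ODE whose solution is the function $M_\alpha(t)$ of~\eqref{eq:M:alpha:def}. Its equilibrium analysis immediately yields~\eqref{eq:M:alpha:global}, and the rate of approach to equilibrium produces the transient time~\eqref{eq:t:alpha:def} and the asymptotic bound~\eqref{eq:M:alpha:long:time}.

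I anticipate two main technical obstacles. First, evaluation at the supremum must be justified as an honest ODE despite $v^2$ being only Lipschitz in $(x,h)$ away from the zero set of $\delta_h\theta$; this can be handled via an a.e.-in-time argument using the assumed regularity $\theta\in C^{1/2}((0,T);C^{1,1/2})$, or by passing through a modulus-of-continuity reformulation in the spirit of \cite{KiselevNazarovVolberg07}. Second, the bookkeeping of the velocity-difference estimate must ensure that the final constant absorbed into the dissipation scales like $M_\infty/\kappa$ (permitting the universal choice $\alpha_0=\eps_0\kappa/M_\infty$) rather than picking up uncontrolled $\alpha^{-1}$ or $\log|h|$ factors; this is the core new technical input beyond Burgers, and is where the specific Calder\'on--Zygmund structure of $\RSZ^\perp$ interacts most delicately with the nonlinear lower bound.
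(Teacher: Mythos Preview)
Your overall architecture is correct and matches the paper, but the velocity-difference estimate you propose has a genuine gap that blocks the absorption argument. After your ``$2\alpha$ cancels $\alpha^{-1}$'' step, the Riesz contribution at the maximum is of size $C\,v^3/|h|^{1-\alpha}$ with a \emph{universal} constant $C$ carrying no residual smallness. The dissipative lower bound you quote is $c_*\kappa\,v^3/(M_\infty|h|^{1-\alpha})$. Absorbing the former into the latter would require $C\leq \tfrac12 c_*\kappa/M_\infty$, which is a small-data condition on $M_\infty/\kappa$, not a condition on $\alpha$. Choosing $\alpha\leq \eps_0\kappa/M_\infty$ does nothing here, because $\alpha$ no longer appears in the bad coefficient. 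The $\alpha\log(\pi/|h|)$ term from your outer piece has the same defect in a milder form and cannot be dismissed by ``smoothing of $v^2$ as $|h|\to 0$'': there is no uniform lower bound on $|\bar h|$ at the maximum.

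The paper's fix is to avoid estimating $\delta_h u$ in terms of $[\theta]_{C^\alpha}$ altogether. Instead, one splits $\delta_h u$ at a scale $\rho\geq 4|h|$ and bounds the inner piece by Cauchy--Schwarz against the dissipation itself,
\[
|\delta_h u_{in}(x)|\leq C\bigl(\rho\,D[\delta_h\theta](x)\bigr)^{1/2},
\]
using that the Riesz kernel has zero spherical mean so that the principal value sees only the increment $\delta_h\theta(x+y)-\delta_h\theta(x)$. The outer piece is handled by a finite-difference-by-parts onto the kernel, which costs one derivative of the kernel and produces $|\delta_h u_{out}|\leq C M_\infty|h|/\rho$ with \emph{no} logarithm and, crucially, only the $L^\infty$ norm of $\theta$. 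Plugging this into $4\alpha|\delta_h u|\,v^2/|h|$, one half of $\kappa D[\delta_h\theta]/|h|^{2\alpha}$ absorbs the inner piece by Young's inequality, leaving a residual proportional to $\alpha^2\rho$; optimizing $\rho$ against the outer piece then yields a term of size $C\alpha^{3/2}(M_\infty/\kappa)^{1/2}\,v^3/|h|^{1-\alpha}$. The surviving factor $\alpha^{3/2}$ is exactly what makes the choice $\alpha_0\sim\kappa/M_\infty$ effective. Your second ``anticipated obstacle'' correctly names the danger; the point is that the danger is not merely a matter of bookkeeping but requires this different estimate for $\delta_h u$.
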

\begin{proof}[Proof of Theorem~\ref{thm:Holder:propagation}]
For $\alpha>0$, we look at the evolution of weighted finite differences
\begin{align}
v(x,t;h) =  \frac{| \delta_h \theta(x)|}{|h|^{\alpha}}. \notag
\end{align}
Since $\theta_0 \in C^\alpha$, we have that 
\begin{align}
\| v_0 \|_{L^\infty_{x,h}} \leq [\theta_0]_{C^\alpha} \leq M_\alpha(0). \label{eq:M:alpha:1}
\end{align} 
Our goal is to find an upper bound $M_\alpha(t)$ such that 
\begin{align}
\|v(\cdot,t;\cdot)\|_{L^\infty_{x,h}} \leq M_\alpha(t) \label{eq:Holder:bound}
\end{align}
for any $t\geq 0$. In view of the periodicity in $h$ of $\delta_h \theta(x)$, we would in turn obtain from \eqref{eq:Holder:bound} that 
\begin{align}
[v(\cdot,t)]_{C^\alpha} = \sup_{x,h \in \TT^2} \frac{|\delta_h \theta(x,t)|}{|h|^\alpha} \leq \| v(\cdot,t;\cdot) \|_{L^\infty_{x,h}} \leq M_\alpha (t)
\label{eq:Holder:OSS}
\end{align}
which would then conclude the proof of the theorem.

In order to find $M_\alpha(t)$ for which \eqref{eq:Holder:bound} holds, we first write the equation obeyed by $\delta_h \theta$. If follows by taking finite differences in \eqref{eq:SQG:1} that pointwise in $x,t,h$ we have
\begin{align}
\left( \partial_t + u \cdot \nabla_x + (\delta_h u) \cdot \nabla_h + \kappa \Lambda \right) \delta_h \theta = \delta_h f.
\label{eq:Holder:0}
\end{align}
Upon multiplying \eqref{eq:Holder:0} by
\begin{align*}
 \frac{\delta_h \theta (x,t)}{|h|^{2\alpha}}
\end{align*}
and using Proposition~\ref{prop:pointwise}, we obtain that $v$ obeys the equation
\begin{align} 
\left(\partial_t + u \cdot \nabla_x + (\delta_h u)\cdot \nabla_h + \kappa \Lambda \right) v^2 +   \frac{\kappa D[\delta_h \theta]}{|h|^{2\alpha}}   
&= 4 \alpha (\delta_h u) \cdot \frac{h}{|h|}  \frac{v^2}{|h|}   +   \frac{2 (\delta_h f) v}{|h|^{\alpha}}   \notag\\ 
& \leq 4 \alpha |\delta_h u|   \frac{v^2}{|h|} +  \frac{4 \| f\|_{L^\infty} v}{|h|^{\alpha}}    \label{eq:Holder:1}
\end{align}
where $\delta_h u = \RSZ^\perp_x (\delta_h \theta)$, and 
\begin{align} 
D[\delta_h \theta](x) 
&= P.V. \int_{\TT^2}  \Bigl(\delta_h \theta(x) - \delta_h \theta(x+y) \Bigr)^2 K_1(y) dy \notag\\
&= \frac{1}{2\pi} P.V. \int_{\RR^2} \Bigl(\delta_h  \theta(x) - \delta_h  \theta(x+y) \Bigr)^2 \frac{1}{|y|^3} dy \label{eq:D:h:def}
\end{align}
with the kernel $K_1$ given explicitly by \eqref{eq:K:alpha} with $\alpha=1$, and we have used the explicit formula \eqref{eq:c:alpha} for the normalizing constant. First we give a lower bound on the dissipative term $D[\delta_h\theta]$, and then estimate the velocity increment $|\delta_h u|$ in \eqref{eq:Holder:1}.

Throughout the proof we will use a cutoff function $\chi \colon [0,\infty) \to [0,\infty)$ which is smooth, non-increasing, identically $1$ on $[0,1]$, vanishes on $[2,\infty)$, and obeys $|\chi'| \leq 2$.

In the spirit of~\cite{ConstantinVicol12}, we obtain a nonlinear lower bound for the dissipative term in \eqref{eq:Holder:1}. Pointwise in $x$ and $h$ it holds that
\begin{align} 
D[\delta_h \theta](x) \geq \frac{|\delta_h\theta(x)|^3}{c_2 \|\theta\|_{L^\infty} |h|} \label{eq:D:h:lower}
\end{align}
for some universal constant $c_2>0$.
To prove \eqref{eq:D:h:lower}, we proceed as follows. For $r \geq 4|h| $ to be determined, we have
\begin{align}
D[\delta_h\theta](x) 
&\geq \frac{1}{2\pi} \int_{\RR^2} \frac{(\delta_h  \theta(x) - \delta_h  \theta(x+y) )^2}{|y|^{3}} \left(1 - \chi\left(\frac{|y|}{r} \right) \right) dy \notag\\
&\geq \frac{1}{2\pi} (\delta_h \theta(x))^2 \int_{|y|\geq 2r} \frac{1}{|y|^{3}} dy - \frac{1}{\pi} |\delta_h \theta(x)| \left| \int_{\RR^2} \frac{\delta_h  \theta(x+y)}{|y|^3} \left(1 - \chi\left(\frac{|y|}{r} \right) \right)  dy  \right|\notag\\
&\geq \frac{1}{2r} |\delta_h \theta(x)|^2  - \frac{1}{\pi} |\delta_h \theta(x)| \int_{\RR^2} |\theta(x+y)| \left| \delta_{-h} \left( \frac{1}{|y|^3} - \frac{1}{|y|^3} \chi \left( \frac{|y|}{r} \right) \right) \right| dy \notag\\
&\geq \frac{1}{2r} |\delta_h \theta(x)|^2  - C  |\delta_h \theta(x)| \| \theta\|_{L^\infty}  |h| \int_{\RR^2} \left( \frac{{\bf 1}_{|y|\geq 3r/4}}{|y|^4} +  \frac{{\bf 1}_{3r/4 \leq |y|\leq 9r/4}}{r^4} \right) dy \notag\\
&\geq \frac{1}{2r} |\delta_h \theta(x)|^2  - C |\delta_h \theta(x)| \| \theta\|_{L^\infty}  \frac{|h|}{r^2}  
\label{eq:D:h:lower:1}
\end{align}
for some $C\geq 1$.
In the above estimate we have used estimate \eqref{eq:grad:gr} below. More precisely, the mean value theorem gives that for a smooth $g$ we have
\begin{align} 
|\delta_{h} g(y)| = \left| h \cdot \nabla g( (1-\lambda) y + \lambda (y+h)) \right| \leq |h| \max_{\lambda\in[0,1]} |\nabla g ( y + \lambda h)| \label{eq:MVT}
\end{align}
for $h,y \in \RR^2$. In particular, for 
\[
g(y) = \frac{1}{|y|^{3}} \left(1 - \chi \left(\frac{|y|}{r} \right) \right)
\] 
we have used that
\[
|\nabla g(z) | \leq C \left( \frac{{\bf 1}_{|z|\geq r}}{|z|^4} + \frac{{\bf 1}_{r \leq |z|\leq 2r}}{r |z|^3} \right) \leq C \left( \frac{{\bf 1}_{|z|\geq r}}{|z|^4} +  \frac{{\bf 1}_{r \leq |z|\leq 2r}}{r^4} \right)
\]
which read at $z = y + \lambda h$, yields
\begin{align}
\max_{\lambda \in [0,1]} |\nabla g( y + \lambda h) | \leq C \left( \frac{{\bf 1}_{|y|\geq 3r/4}}{|y|^4} +  \frac{{\bf 1}_{3r/4 \leq |y|\leq 9r/4}}{r^4} \right)\label{eq:grad:gr}
\end{align}
whenever $|h|\leq r/4$. This proves \eqref{eq:D:h:lower:1}. Setting
\[
r = \frac{4 C  \|\theta\|_{L^\infty}}{|\delta_h \theta(x)|} |h|
\]
in \eqref{eq:D:h:lower:1}, which obeys $r \geq 4 |h|$ due to $|\delta_h \theta| \leq 2 \|\theta\|_{L^\infty}$, completes the proof of \eqref{eq:D:h:lower}. 

Combining estimate \eqref{eq:D:h:lower} with the a priori bound \eqref{eq:Mp:def} (with $p=\infty$) we obtain that the positive term on the right side of \eqref{eq:Holder:1} is bounded from below as
\begin{align}
 \frac{\kappa  D[\delta_h \theta]}{|h|^{2\alpha}} \geq   \frac{\kappa}{|h|^{2\alpha}} \frac{|\delta_h \theta|^3}{c_2 M_\infty |h|} 
 = \frac{\kappa }{c_2 M_\infty  } \frac{v^3}{ |h|^{1-\alpha}} \label{eq:Holder:D:h}
\end{align}
pointwise in $x$ and $h$.

We now estimate the velocity finite difference $\delta_h u$. For this purpose fix $x$ and $h$ (ignore $t$ dependence) and let $\rho>0$ be such that 
\begin{align}
\rho \geq 4 |h|. \label{eq:rho:cond}
\end{align}
As before we let $\chi$ be a smooth cutoff function, that is $1$ on $[0,1]$, non-increasing, vanishes on $[2,\infty)$, and obeys $|\chi'|\leq 2$.
We decompose the singular integral defining $\delta_h u$ as 
\begin{align}
\delta_h u(x) = \RSZ^\perp_x (\delta_h \theta(x) ) = \frac{1}{2\pi} P.V. \int_{\RR^2} \frac{y^\perp}{|y|^3} \delta_h \theta(x+y) dy = \delta_h u_{in}(x)  + \delta_h u_{out}(x),\notag
\end{align}
where the inner piece is given by
\begin{align}
\delta_h u_{in}(x)  
&=\frac{1}{2\pi} P.V. \int_{\RR^2} \frac{y^\perp}{|y|^3} \chi\left(\frac{|y|}{\rho}\right) \delta_h \theta(x+y) dy\notag\\
&=\frac{1}{2\pi} P.V. \int_{\RR^2} \frac{y^\perp}{|y|^3} \chi\left(\frac{|y|}{\rho}\right) \Bigl( \delta_h \theta(x+y) - \delta_h \theta(x) \Bigr)dy\notag
\end{align}
by using that the kernel of $\RSZ^\perp$ has zero average on the unit sphere, and the outer piece is given by
\begin{align}
\delta_h u_{out}(x)
&=\frac{1}{2\pi} \int_{\RR^2} \frac{y^\perp}{|y|^3} \left( 1- \chi\left(\frac{|y|}{\rho}\right) \right)  \delta_h \theta(x+y) dy\notag\\
&=\frac{1}{2\pi} \int_{\RR^2} \delta_{-h} \left(  \frac{y^\perp}{|y|^3} \left( 1- \chi\left(\frac{|y|}{\rho}\right) \right)  \right)  \theta(x+y)   dy\notag
\end{align}
by using a finite-difference-by-parts.

For the inner piece, by appealing to the Cauchy-Schwartz inequality,  we obtain
\begin{align}
|\delta_h u_{in}| \leq C \left( \rho D[\delta_h \theta] \right)^{1/2}. \label{eq:Holder:u:in}
\end{align}
In order to bound the outer piece, we recall that the mean value theorem gives
\begin{align} 
|\delta_{h} g(y)|  \leq |h| \max_{\lambda\in[0,1]} |\nabla g (y + \lambda h)| \label{eq:MVT:2}
\end{align}
for smooth functions $g$. We apply \eqref{eq:MVT:2} with
\begin{align} 
g(y) = \frac{y^\perp}{|y|^3}  \left( 1- \chi\left(\frac{|y|}{\rho}\right) \right)  \notag
\end{align}
with derivative
\begin{align} 
|\nabla g(z)| 
\leq C \left( \frac{{\bf 1}_{|z|\geq \rho}}{|z|^3} +  \frac{{\bf 1}_{\rho\leq |z|\leq 2\rho} }{\rho |z|^2}  \right)
\leq C \frac{{\bf 1}_{|z|\geq \rho}}{|z|^3}. \notag
\end{align}
Evaluating the above inequality at $z = y - \lambda h$ and using that $|h| \leq \rho/4$ we obtain
\begin{align} 
|\delta_{-h} g(y)| \leq |h| \max_{\lambda \in [0,1]} |\nabla g (y - \lambda h)| \leq C |h| \frac{{\bf 1}_{|y| \geq \rho/2}}{|y|^3}. \notag
\end{align}
This in turn implies that 
\begin{align}
|\delta_h u_{out}| \leq C |h|  \int_{|y|\geq \rho/2} \frac{1}{|y|^3} |\theta(x+y)| dy \leq \frac{C M_\infty |h|}{\rho} \label{eq:Holder:u:out}.
\end{align}
Combining the  inner \eqref{eq:Holder:u:in} and outer \eqref{eq:Holder:u:out} velocity estimates, we obtain that 
\begin{align}
|\delta_h u| \leq C \left( \left( \rho D[\delta_h \theta] \right)^{1/2} + \frac{M_\infty |h|}{\rho} \right) \label{eq:delta:h:u}
\end{align}
if $\rho$ is chosen so that $\rho \geq 4|h|$. 

Using the Cauchy-Schwartz inequality we  obtain from \eqref{eq:delta:h:u} that
\begin{align*}
4 \alpha  |\delta_h u|  \frac{v^2}{|h|} 
&\leq C \alpha  \left( \rho D[\delta_h \theta] \right)^{1/2} \frac{v^2}{|h|} + C \alpha  \frac{M_\infty v^2}{\rho}  \notag   \\
&\leq  \frac{ \kappa  D[\delta_h \theta] }{2 |h|^{2\alpha}} +  \frac{c_3 \alpha^2 \rho v^4}{\kappa |h|^{2-2\alpha}} +    \frac{ c_3 \alpha M_\infty v^2}{\rho}  
\end{align*}
for some sufficiently large $c_3>0$. Therefore, letting
\begin{align}
\rho = \frac{\kappa^{1/2} M_\infty^{1/2} |h|^{1-\alpha} }{ \alpha^{1/2} v} = \frac{ \kappa^{1/2} M_\infty^{1/2}}{ \alpha^{1/2} |\delta_h \theta|} |h| \label{eq:rho:1}
\end{align}
we obtain
\begin{align}
4 \alpha  |\delta_h u|  \frac{v^2}{|h|} 
\leq  \frac{\kappa D[\delta_h \theta] }{2 |h|^{2\alpha}} + \frac{ 2 c_3 \alpha^{3/2} M_\infty^{1/2}}{\kappa^{1/2} } \frac{v^3}{ |h|^{1-\alpha}}.
\label{eq:delta:h:u:2}
\end{align}
Note that in order to define $\rho$ as in \eqref{eq:rho:1}, we need to ensure $\rho \geq 4|h|$. By the triangle inequality we have $|\delta_h \theta| \leq 2 M_\infty$ and hence indeed 
\begin{align*}
\rho \geq \left( \frac{\kappa}{2 \alpha M_\infty} \right)^{1/2} |h|  \geq 4 |h|
\end{align*}
holds, since by assumption $\alpha \in (0,\alpha_0]$ and 
\begin{align}
\alpha_0 \leq \frac{\kappa}{32  M_\infty}. \label{eq:alpha:0:cond:1}
\end{align}

Combining \eqref{eq:Holder:1}, the lower bound \eqref{eq:Holder:D:h},  estimate \eqref{eq:delta:h:u:2}, and recalling the definition of $M_\infty$ in \eqref{eq:Mp:def}, we arrive at
\begin{align}
\left(\partial_t + u \cdot \nabla_x + (\delta_h u)\cdot \nabla_h + \kappa \Lambda \right) v^2 
+ \left( \frac{\kappa }{2 c_2 M_\infty} - \alpha^{3/2} \frac{ 2 c_3 M_\infty^{1/2}}{\kappa^{1/2} } \right) \frac{v^3}{ |h|^{1-\alpha}}   \leq     \frac{4 c_0 \kappa M_\infty v}{|h|^{\alpha}}.
\label{eq:Holder:2}
\end{align}
Since $\alpha \leq \alpha_0$, with
\begin{align} 
\alpha_0 \leq \frac{\kappa}{(8 c_2 c_3)^{2/3} M_\infty} \label{eq:alpha:0:cond:2}
\end{align}
we furthermore obtain from \eqref{eq:Holder:2}
that 
\begin{align} 
\left(\partial_t + u \cdot \nabla_x + (\delta_h u)\cdot \nabla_h + \kappa \Lambda \right) v^2 
+   \frac{\kappa }{4 c_2 M_\infty}   \frac{v^3}{ |h|^{1-\alpha}}   \leq   \frac{4 c_0 \kappa M_\infty v}{|h|^{\alpha}}.
\label{eq:Holder:3}
\end{align}
pointwise in $x$ and $h$.
Upon using the $\eps$-Young inequality for the right hand side, that 
\begin{align} 
\left(\partial_t + u \cdot \nabla_x + (\delta_h u)\cdot \nabla_h + \kappa \Lambda \right) v^2 
+   \frac{\kappa }{6 c_2 M_\infty}   \frac{v^3}{ |h|^{1-\alpha}}   \leq   c_4 \kappa |h|^{\frac{1-4\alpha}{2}} M_\infty^2
\label{eq:Holder:4}
\end{align}
for a positive universal constant $c_4$ which may be computed explicitly ($c_4 = (4c_0)^{3/2} (4c_2)^{1/2}$).

We now proceed as in \cite[Section 4]{CordobaCordoba04}, and refer to Appendix~\ref{sec:sup:dt} below for details. Since $\theta$ is sufficiently smooth 
$v^2$ is a bounded continuous function in both $x$ and $h$, which is periodic in $x$. Moreover, given $x,h \in \TT^2$ and $k \in \ZZ^2_*$ we have that
\[
v(x,t;h)^2 \geq v^2(x,t;k+2\pi k),
\] 
in view of the periodicity in $h$ of $\delta_h \theta$, and the strict monotonicity of $|h|^{-\alpha}$. Therefore there exists {\em at least} one point $(\bar x, \bar h) = (\bar x(t), \bar h(t)) \in \TT^2 \times \TT^2$ where the function $v(\cdot,t;\cdot)^2$ attains its maximum. We define
\begin{align} 
g(t) = \sup_{x,h \in \TT^2} v(x,t;h)^2 = v(\bar x(t), t; \bar h(t))^2. \label{eq:g(t):def}
\end{align}
In Appendix~\ref{sec:sup:dt} below we show that $g$ is Lipschitz continuous on $[0,T]$, and that for almost every $t$ there exists $(\bar x(t);\bar h(t))$ such that 
\begin{align*} 
g'(t) = ( \partial_t v^2) (\bar x(t),t;\bar h(t))
\end{align*}
and \eqref{eq:g(t):def} holds.
Evaluating \eqref{eq:Holder:4} at the joint $x,h$-maximum $(\bar x,\bar h)$, using that at the maximum  we have
\[
\nabla_x v(\bar x,t;\bar h)^2 = 0 = \nabla_h v(\bar x,t;\bar h)^2 \quad \mbox{ and } \quad \Lambda v(\bar x,t;\bar h)^2 \geq 0,
\]
and the fact that $|\bar h|\leq 4 \pi$ we thus obtain 
\begin{align} 
g'(t) +\frac{\kappa }{6 c_2 M_\infty (4\pi)^{1-\alpha}}  g(t)^{3/2} \leq c_{4} \kappa M_\infty^{2} (4 \pi) ^{\frac{1-4\alpha}{2}}
\label{eq:Holder:5}
\end{align}
once we additionally assume that 
\begin{align}
\alpha_0 \leq \frac 14. \label{eq:alpha:0:cond:3}
\end{align}

Since we are now dealing with an ordinary differential equation, by the usual comparison principle for ODEs it follows that 
\begin{align*}
\| v(t)\|_{L^\infty_{t,x}}^2 = g(t) \leq M_\alpha(t)^2,
\end{align*}
where $M_\alpha(t)$ is the solution of the initial value problem
\begin{align}
\frac{d}{dt} M_\alpha^2 + \frac{\kappa}{c_5 M_\infty} M_\alpha^3 = c_5^2 \kappa M_\infty^2, \quad M_\alpha(0) = [\theta_0]_{C^\alpha},
\label{eq:M:alpha:def}
\end{align}
where $c_5 = c_5(c_2,c_4)$, is a fixed deterministic constant which is {\em independent} of $\kappa, M_\infty$, or $\alpha$.

In particular, there have proven that 
\begin{align*} 
[\theta(t)]_{C^\alpha} \leq M_\alpha(t) \leq \max\left\{ [\theta_0]_{C^\alpha}, c_5  M_\infty \right\}
\end{align*}
for any $t\geq 0$.
Moreover there exists 
$t_\alpha = t_\alpha(M_\infty,[\theta_0]_{C^\alpha}) \geq 0$ defined as
\begin{align} 
t_\alpha = 
\begin{cases} 
0, & \mbox{if } [\theta_0]_{C^\alpha} \leq 2 c_5 M_\infty \\ 
\frac{1}{7\kappa} \left( \frac{[\theta_0]_{C^\alpha}^2}{ 4 c_5^2 M_\infty^2} -1 \right), & \mbox{if } [\theta_0]_{C^\alpha} > 2 c_5 M_\infty 
\end{cases}
\label{eq:t:alpha:def}
\end{align}
such that
\begin{align*} 
[\theta(t)]_{C^\alpha} \leq 2 c_5  M_\infty
\end{align*}
for any $t\geq t_\alpha$. The above bound shows that the solution forgets the initial data even in the $C^\alpha$ norm.
\end{proof}

Theorem~\ref{thm:Holder:propagation} implies the propagation of  H\"older continuity for weak solutions.

\begin{theorem}[\bf H\"older propagation for weak solutions] \label{thm:Holder:weak}
Assume $f \in L^\infty \cap H^1$, $\theta_0 \in L^\infty \cap H^1$, $T>0$ is arbitrary, and let $\theta \in L^\infty(0,T;H^1) \cap L^2(0,T;H^{3/2})$ be the unique weak solution of the critical, forced SQG equation \eqref{eq:SQG:1}--\eqref{eq:SQG:3}. Let $\alpha_0 = \alpha_0 (\|\theta_0\|_{L^\infty},\|f\|_{L^\infty}) \leq 1/4$ be defined as in \eqref{eq:alpha:0:def}. For any $\alpha \in (0,\alpha_0]$, if $\theta_0 \in C^\alpha$, then $\theta \in L^\infty(0,T;C^\alpha)$. Moreover,
we have $[ \theta(t)]_{C^\alpha} \leq M_\alpha(t)$ for a function $M_\alpha(t)$ which obeys \eqref{eq:M:alpha:global} and \eqref{eq:M:alpha:long:time}.
\end{theorem}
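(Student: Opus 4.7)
The plan is to obtain the weak solution as a strong limit of classical solutions to mollified problems, apply Theorem~\ref{thm:Holder:propagation} uniformly along the approximating sequence, and transfer the $C^\alpha$ bound in the limit via lower semicontinuity and weak uniqueness. Concretely, let $J_\eps$ be a standard periodic, nonnegative, mass-one mollifier and set $\theta_0^\eps = J_\eps \theta_0$, $f^\eps = J_\eps f$. Since convolution against such a kernel is a contraction on $L^\infty$ and does not increase the $C^\alpha$ seminorm, one has the $\eps$-uniform bounds $\|\theta_0^\eps\|_{L^\infty} \leq \|\theta_0\|_{L^\infty}$, $\|f^\eps\|_{L^\infty}\leq \|f\|_{L^\infty}$, and $[\theta_0^\eps]_{C^\alpha}\leq [\theta_0]_{C^\alpha}$, while the mollified data are smooth.

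By Proposition~\ref{prop:local}, for each $\eps$ there is a unique classical solution $\theta^\eps \in C^{1/2}((0,T_*^\eps);C^{1,1/2})$ obeying the instantaneous smoothing estimate \eqref{eq:local:smoothing}. Because $\alpha_0$ in \eqref{eq:alpha:0:def} depends only on $\kappa$ and $M_\infty(\theta_0^\eps,f^\eps) \leq M_\infty(\theta_0,f)$, it is independent of $\eps$, and Theorem~\ref{thm:Holder:propagation} applies on $[0,T_*^\eps)$ to give
\[
[\theta^\eps(t)]_{C^\alpha} \leq M_\alpha(t), \qquad t\in [0,T_*^\eps),
\]
with $M_\alpha$ independent of $\eps$. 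This uniform $C^\alpha$ bound, combined with Proposition~\ref{prop:Lp} and standard parabolic continuation (iterating \eqref{eq:local:smoothing}), rules out blow-up, so $T_*^\eps=\infty$ and $\theta^\eps$ is smooth and bounded uniformly in $\eps$ on $[0,T]$.

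To pass to the limit $\eps\to 0$, observe that $\{\theta^\eps\}$ is uniformly bounded in $L^\infty(0,T;L^\infty\cap C^\alpha) \cap L^\infty(0,T;H^1)\cap L^2(0,T;H^{3/2})$, the last two bounds coming from the standard energy estimate for \eqref{eq:SQG:1}--\eqref{eq:SQG:3} with smooth data. From the equation itself, $\partial_t \theta^\eps$ is uniformly bounded in $L^2(0,T;H^{-s})$ for some $s>0$. Aubin--Lions then produces a subsequence converging strongly in $C([0,T];L^2)$ and weakly-$\ast$ in $L^\infty(0,T;C^\alpha)$ to a limit $\tilde\theta$. Standard passage to the limit in the nonlinear term (using the strong $L^2_{t,x}$ convergence together with the uniform $H^{3/2}$ control) identifies $\tilde\theta$ as a weak solution of \eqref{eq:SQG:1}--\eqref{eq:SQG:3} in the class $L^\infty H^1 \cap L^2 H^{3/2}$, so by the uniqueness assumption $\tilde\theta = \theta$. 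Weak-$\ast$ lower semicontinuity of the $C^\alpha$ seminorm combined with the strong $L^2$ convergence then yields
\[
[\theta(t)]_{C^\alpha} \leq \liminf_{\eps \to 0}\, [\theta^\eps(t)]_{C^\alpha} \leq M_\alpha(t)
\]
for a.e.\ $t\in[0,T]$, and the bounds \eqref{eq:M:alpha:global} and \eqref{eq:M:alpha:long:time} follow at once.

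The main obstacle is verifying that the approximating classical solutions truly exist on all of $[0,T]$, uniformly in $\eps$. The local time $T_*^\eps$ from Proposition~\ref{prop:local} is not a priori bounded below in terms of $\|\theta_0\|_{H^1}$ alone, so continuation must be obtained by other means; the $C^\alpha$ bound from Theorem~\ref{thm:Holder:propagation} is precisely what makes this possible, since $C^\alpha$ regularity is strictly supercritical for \eqref{eq:SQG:1} and its propagation (together with the smoothing in Proposition~\ref{prop:local}) precludes the formation of singularities. Once this uniform-in-$\eps$ global existence is in hand, the remainder of the argument is soft, relying on compactness, the uniqueness of weak solutions in $L^\infty H^1 \cap L^2 H^{3/2}$, and the lower semicontinuity of Hölder seminorms under pointwise convergence.
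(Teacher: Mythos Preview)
Your approach is valid but differs from the paper's in the choice of regularization: you mollify the data and force and work with the original equation, while the paper keeps the initial data and adds an artificial viscosity $-\eps\Delta\theta^\eps$ (equation \eqref{eq:regular:SQG}). The paper's choice makes the global existence of the approximations immediate, since the $-\eps\Delta$ term renders the problem subcritical; one then only needs to observe that the extra Laplacian is harmless in the pointwise argument of Theorem~\ref{thm:Holder:propagation} (nonnegative at a maximum). Your choice avoids modifying the equation but forces you to invoke the continuation criterion stated after Proposition~\ref{prop:local} (boundedness of $[\theta]_{C^\alpha}$ prevents blow-up), which is itself essentially the bootstrap step of Theorem~\ref{thm:global:regularity}. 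This is not circular, but it does entangle the proof of Theorem~\ref{thm:Holder:weak} with the global regularity machinery that the paper's viscosity approach cleanly postpones.

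There is one imprecision you should fix. You claim uniform-in-$\eps$ bounds in $L^\infty(0,T;H^1)\cap L^2(0,T;H^{3/2})$ from ``the standard energy estimate.'' For critical SQG the naive $H^1$ energy estimate does \emph{not} close: the cubic term $\int \nabla u\cdot\nabla\theta\cdot\nabla\theta$ scales like $\|\theta\|_{H^1}\|\theta\|_{H^{3/2}}^2$ and cannot be absorbed without smallness. Uniform $H^1$ bounds require the $C^\alpha$-improved nonlinear lower bound \eqref{eq:D:grad:nonlinear}, as carried out in \eqref{eq:grad:PDE:1}--\eqref{eq:grad:ODE:1}. Alternatively, you can drop this claim entirely: the uniform $L^\infty_t C^\alpha_x$ bound alone already gives (via $C^\alpha\hookrightarrow H^\alpha$ and a $\partial_t$ bound in $H^{\alpha-2}$) enough compactness for Aubin--Lions, and then the identification $\tilde\theta=\theta$ follows not from uniqueness in $L^\infty H^1\cap L^2 H^{3/2}$ but from the explicit weak--strong estimate the paper performs, which uses only $\tilde\theta\in L^\infty_t C^\alpha_x$ and $\theta\in L^\infty_t H^1_x\cap L^2_t H^{3/2}_x$.
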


\begin{proof}[Proof of Theorem~\ref{thm:Holder:weak}]
Let $J_\eps$ be a standard mollifier operator. For $\eps \in (0,1]$, we let $\theta_\eps$ be the solution of 
\begin{align}
\partial_t \theta^\eps + \kappa \Lambda \theta^\eps + u^\eps \cdot \nabla \theta^\eps - \eps \Delta \theta^\eps = J_\eps f, \quad u^\eps = \RSZ^\perp \theta^\eps, \quad \theta^\eps_0 = \theta_0.
\label{eq:regular:SQG}
\end{align}
As in Proposition~\ref{prop:Lp} we obtain that
\begin{align}
\|\theta^\eps(t) \|_{L^\infty} \leq \|\theta_0\|_{L^\infty} + \frac{1}{c_0 \kappa} \|J_\eps f\|_{L^\infty} \leq \|\theta_0\|_{L^\infty} + \frac{1}{c_0 \kappa} \|f\|_{L^\infty} = M_\infty(\theta_0,f).
\label{eq:regular:SQG:bdd}
\end{align}
Indeed, the addition of the regularizing term $-\eps \Delta \theta^\eps$ in the equation does not change any part of the argument, and $J_\eps$ is given by convolving with an $L^1$ kernel of mean $1$. Once we have that  $\theta^\eps \in L^\infty_{t,x}$, a supercritical information for the dissipation given by the Laplacian, the existence of a unique global smooth solution of \eqref{eq:regular:SQG} follows from classical arguments (see e.g.~\cite{ConstantinWu99} for subcritical SQG). Since $J_\eps f \in C^\infty$, in fact a  bootstrap shows that $\theta^\eps \in C^\infty( (0,T) \times \TT^2)$, but with bounds that depend on $\eps$.

At this stage we compute $\alpha_0$ as in \eqref{eq:alpha:0:def}, which is independent of $\eps$ due to \eqref{eq:regular:SQG:bdd}, and then apply Theorem~\ref{thm:Holder:propagation} to $\theta^\eps$, which we are allowed to since $\theta^\eps$ is smooth. We emphasize that the presence of the regularizing term $-\eps \Delta$ does not require {\em any} modification to the proof of Theorem~\ref{thm:Holder:propagation}. The Laplacian (in $x$) does not affect the finite differences in $h$, and the negative Laplacian evaluated at the maximum of a function is non-negative.  Therefore, for any $\eps \in (0,1]$, we have
\begin{align}
[\theta^\eps(t)]_{C^\alpha} \leq M_\alpha(t)
\label{eq:theta:eps:C:alpha}
\end{align}
for all $t\geq 0$, where $M_\alpha (t)$ (which is independent of $\eps$) is given by the solution of \eqref{eq:M:alpha:def}, and obeys the bounds \eqref{eq:M:alpha:global} for all time, and \eqref{eq:M:alpha:long:time} for long enough time. 

The sequence of solutions $\{ \theta^\eps \}_{\eps \in (0,1]}$ is thus uniformly bounded in $L^\infty(0,T;C^\alpha)$. In particular,  since $|\TT^2|<\infty$ this implies that $\theta^\eps$ is uniformly bounded in $L^\infty(0,T;H^\alpha)$, and from \eqref{eq:regular:SQG} we have that $\partial_t \theta^\eps$ is uniformly bounded in $L^\infty(0,T;H^{\alpha-2})$. Since the injection of $H^\alpha(\TT^2)$ into $L^2(\TT^2)$ is compact, and the injection of $L^2(\TT^2)$ in $H^{\alpha-2}(\TT^2)$ is continuous, the Aubin-Lions compactness lemma and the uniform in $\eps$ estimates obtained earlier, imply that there exists $\bar \theta \in L^\infty(0,T;C^\alpha)$, with bounds inherited (e.g. by duality) directly from \eqref{eq:theta:eps:C:alpha}, such that 
\[
\theta^\eps \to \bar \theta \mbox{ in } L^2(0,T;L^2).
\]
The above strong convergence in $L^2_{t,x}$ is enough in order to pass to the limit in the weak formulation of \eqref{eq:regular:SQG}, and show that $\bar \theta$ is a weak solution of the critical forced SQG equation \eqref{eq:SQG:1}--\eqref{eq:SQG:3} on $[0,T)$. This is seen by writing the nonlinear term in divergence form.

To conclude the proof we notice that in fact $\theta = \bar \theta$. This follows in the spirit of weak-strong uniqueness: one writes the equation obeyed by $\theta - \bar \theta$ and performs an $L^2$ energy estimate. The equation for the difference has zero initial data and zero force. Using that $\bar \theta \in L^\infty_t C^\alpha_x$ we have $\int  \RSZ^\perp \bar \theta \cdot \nabla (\theta -\bar \theta) (\theta - \bar \theta) dx = 0$, and since $\theta \in L^\infty_t H^1_x \cap L^2_t H^{3/2}_x$ we have $|\int \RSZ^\perp(\theta-\bar \theta) \cdot \nabla \theta (\theta - \bar \theta) dx| \leq \kappa \|\theta-\bar \theta\|_{H^{1/2}}^2 + C \|\theta -\bar \theta\|_{L^2}^2 \|\theta\|_{H^{3/2}}^2$. The proof of $\theta = \bar \theta$ is then concluded via the Gr\"onwall inequality.
\end{proof}

The results obtained in this section may be summarized as follows.

\begin{theorem}[\bf Global regularity]\label{thm:global:regularity}
Let $\theta_0 \in H^1(\TT^2)$ and $f \in L^\infty(\TT^2) \cap H^1(\TT^2)$. There exists a unique global solution $\theta \in L^\infty([0,\infty);H^1) \cap L^2_{loc}((0,\infty);H^{3/2})$ of the initial value problem \eqref{eq:SQG:1}--\eqref{eq:SQG:3}. For any $t_1>0$ we have $\theta \in L^\infty([t_1,\infty);H^{3/2}) \cap L^2_{loc}([t_1,\infty);H^2)$. 
\end{theorem}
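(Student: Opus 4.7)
The plan is to combine the local well-posedness and instant-smoothing result of Proposition~\ref{prop:local} with the global H\"older propagation of Theorem~\ref{thm:Holder:weak} to rule out finite-time blowup, and then to bootstrap to $H^{3/2}$ and $H^2$ regularity using energy estimates that have become effectively subcritical once $\theta \in L^\infty_t C^\alpha_x$ is known.

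First I would invoke Proposition~\ref{prop:local} to produce a unique strong solution $\theta \in C([0,T_*); H^1) \cap L^2(0,T_*; H^{3/2})$ together with the smoothing bound~\eqref{eq:local:smoothing}. Since $f\in H^1$ and $H^{1+\beta}(\TT^2) \hookrightarrow L^\infty \cap C^\beta$ for any $\beta\in(0,1)$, this gives $\theta(\cdot,t_0)\in L^\infty \cap C^\beta$ for each fixed $t_0\in(0, T_*/2)$, with norms controlled by $\|\theta_0\|_{H^1}$, $\|f\|_{H^1}$, $\kappa$ and $t_0$. This preliminary step is needed because $H^1(\TT^2)$ does not embed into $L^\infty$, so the hypotheses of Theorem~\ref{thm:Holder:weak} do not apply to $\theta_0$ directly.

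Treating $\theta(\cdot,t_0)$ as the new initial datum, I would apply Theorem~\ref{thm:Holder:weak} on $[t_0, T_*)$ with $\alpha\leq \alpha_0 = \min\{\eps_0 \kappa / M_\infty,1/4\}$, where $M_\infty = \|\theta(\cdot,t_0)\|_{L^\infty} + (c_0\kappa)^{-1}\|f\|_{L^\infty}$. This yields the a priori bound $\sup_{t\in[t_0, T_*)}[\theta(t)]_{C^\alpha} \leq \max\{[\theta(t_0)]_{C^\alpha}, M_\infty/\eps_0\}$. By the continuation criterion recorded after Proposition~\ref{prop:local}, this supercritical control permits the unique extension of the solution past $T_*$; iterating, we obtain a global solution together with the uniform estimate $\theta \in L^\infty([t_0,\infty); C^\alpha)$. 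A standard $H^1$ energy inequality, in which the nonlinear contribution $\int \nabla u : (\nabla\theta \otimes \nabla \theta)\, dx$ is controlled by interpolation and absorbed against the $\kappa\|\Lambda^{3/2}\theta\|_{L^2}^2$ dissipation using the H\"older smallness, together with the $L^\infty$ bound from Proposition~\ref{prop:Lp} (applied after time $t_0$), then promotes $L^\infty_{loc,t}H^1$ to the desired $\theta \in L^\infty([0,\infty); H^1)$ and $L^2_{loc}((0,\infty); H^{3/2})$.

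For the improved regularity on $[t_1,\infty)$, I would run an $H^{3/2}$ energy estimate: testing \eqref{eq:SQG:1} against $\Lambda^3 \theta$ yields
\[
\tfrac{d}{dt}\|\Lambda^{3/2}\theta\|_{L^2}^2 + 2\kappa \|\Lambda^2\theta\|_{L^2}^2 = -2\int[\Lambda^{3/2}, u\cdot \nabla]\theta \cdot \Lambda^{3/2}\theta\, dx + 2\int \Lambda^{3/2}f \cdot \Lambda^{3/2}\theta \,dx,
\]
the transport piece $\int u\cdot \nabla \Lambda^{3/2}\theta \cdot \Lambda^{3/2}\theta\, dx$ vanishing by $\ddiv u = 0$. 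A Kato-Ponce-type commutator bound controls the nonlinear term by $C[\theta]_{C^\alpha}\|\Lambda^2\theta\|_{L^2}^2$ (modulo lower-order terms), and for the $\alpha$ already fixed above (small relative to $\kappa$) this is absorbed by the $\kappa \|\Lambda^2\theta\|_{L^2}^2$ dissipation. Combined with the smoothing of Proposition~\ref{prop:local} at $\beta = 1/2$ to initiate the estimate at $t_1/2$, Gr\"onwall then yields $\theta \in L^\infty([t_1,\infty); H^{3/2}) \cap L^2_{loc}([t_1,\infty); H^2)$. The main obstacle is closing this last estimate cleanly: if the naive commutator approach incurs too large a constant, the robust fallback is to apply the $C^\alpha$ variant of the nonlinear lower bound on $\Lambda$ from~\cite{ConstantinVicol12} directly to $\Lambda^{3/2}\theta$, exploiting the smallness of $\alpha$ exactly as in Theorem~\ref{thm:Holder:propagation}.
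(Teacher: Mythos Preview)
Your overall strategy---local existence with instant smoothing, then H\"older propagation from Theorem~\ref{thm:Holder:weak}, then continuation and bootstrap---matches the paper. The gap is in the bootstrap mechanics.

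The claimed commutator bound $\|[\Lambda^{3/2}, u\cdot\nabla]\theta\|_{L^2}\,\|\Lambda^{3/2}\theta\|_{L^2} \leq C[\theta]_{C^\alpha}\|\Lambda^2\theta\|_{L^2}^2$ does not follow from Kato--Ponce: Lemma~\ref{lem:fractional:calculus} places $\nabla u$ in a Lebesgue space, which is controlled by Sobolev norms of $\theta$, not by the small-exponent H\"older seminorm. Even if such an estimate held, absorption would require $C[\theta]_{C^\alpha}<\kappa$, whereas \eqref{eq:M:alpha:global} only gives $[\theta]_{C^\alpha}\lesssim M_\infty/\eps_0$, which is generically large; it is $\alpha$ that is small relative to $\kappa/M_\infty$, not the seminorm itself. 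Your fallback of applying the nonlinear lower bound directly to $\Lambda^{3/2}\theta$ also misfires: the improved bound of \cite[Theorem~2.2]{ConstantinVicol12} needs H\"older control on a primitive of the function at hand, and you have no a priori bound on $[\Lambda^{1/2}\theta]_{C^\alpha}$.

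The paper's route (carried out in the proof of Theorem~\ref{thm:H:3/2:absorbing}) is to apply the $C^\alpha$-improved nonlinear lower bound to $\nabla\theta$, not to $\Lambda^{3/2}\theta$: one has $D[\nabla\theta](x)\gtrsim |\nabla\theta(x)|^{(3-\alpha)/(1-\alpha)}[\theta]_{C^\alpha}^{-1/(1-\alpha)}$ pointwise, and since $(3-\alpha)/(1-\alpha)>3$ the cubic nonlinear contribution $|\nabla u|\,|\nabla\theta|^2$ (handled via the inner/outer splitting as in~\eqref{eq:grad:nonlinear}) is absorbed by Young. This yields a clean $H^1$ inequality with $H^{3/2}$ dissipation. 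Only then is the $H^{3/2}$ energy estimate run, and there the commutator gives $C\|\theta\|_{H^{3/2}}^2\|\theta\|_{H^2}$ (as in~\eqref{eq:H32:ODE:1}), which closes by Young together with the uniform Gr\"onwall Lemma~\ref{lem:Gronwall} using the already-established $L^2_t H^{3/2}$ control.
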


It is clear that if we would furthermore assume $f \in C^\infty(\TT^2)$, then $\theta \in C^\infty([t_1,\infty) \times \TT^2)$, for any $t_1 >0$. 

\begin{proof}[Proof of Theorem~\ref{thm:global:regularity}]
 By the local existence result of Proposition~\ref{prop:local}, there exists a time $T_* =  T_*(\theta_0,f) > 0$, and a unique solution $\theta \in L^\infty_t H^1_x \cap L^2_t H^{3/2}_x$ of \eqref{eq:SQG:1}--\eqref{eq:SQG:3} on $[0,T_*)$. In addition, by the local smoothing estimate \eqref{eq:local:smoothing}, since $f \in H^1$ we conclude that $\theta(t_1) \in H^2 \supset C^{\alpha_0}$, where $\alpha_0 = \alpha_0(\|\theta_0\|_{L^\infty}, \|f\|_{L^\infty})$ is as in Theorem~\ref{thm:Holder:propagation}, and $t_1 \in (0,T_*)$ is arbitrary. The propagation of H\"older continuity result of Theorem~\ref{thm:Holder:weak}, with initial data $\theta(t_1)$ then yields
\[
\sup_{t \in [t_1, T_* - \tau]} [\theta(t)]_{C^{\alpha_0}} \leq C(\theta_0,f)
\]
where the constant $C(\theta_0,f)$ is {\em independent} of $\tau$. Since the H\"older bound is supercritical for the natural scaling of the equation, we use the $C^{\alpha_0}$ version of the nonlinear lower bound for the fractional Laplacian~\cite[Theorem 2.2]{ConstantinVicol12} in order to bootstrap in regularity and obtain
\[
\sup_{t \in [t_1, T_* - \tau]} \|\theta(t)\|_{H^{3/2}}^2 + \int_{t_1}^{T_* - \tau} \| \theta(t)\|_{H^2}^2 dt \leq C(\theta_0,f)
\]
independently of $\tau$. The proof of this bootstrap procedure is given as part the proof of Theorem~\ref{thm:H:3/2:absorbing} below, and we omit the details here to avoid redundancy. The above estimate in particular shows that the solution $\theta$ may be continued uniquely past $T_*$, since for data in $H^{3/2}$ the time of existence of the $L^\infty_t H^1_x \cap L^2_t H^{3/2}_x$ solution depends only on the size of the {\em norm} in $H^{3/2}$ (we do not have this fact available if the initial data merely lies in $H^1$). Having extended the solution past $T_*$, we repeat the above argument and conclude the proof of global regularity.
\end{proof}

\section{Existence of a global attractor}

In view of the global existence  established in Theorem~\ref{thm:global:regularity}, we define a {\em solution operator} $S(t)$ for the initial value problem \eqref{eq:SQG:1}--\eqref{eq:SQG:3} via
\begin{align}
S(t) \colon H^1 \to H^1, \qquad S(t) \theta_0 = \theta(\cdot,t), \label{eq:S(t):def}
\end{align}
for any $t \geq 0$. In this section we establish (cf.~Theorem~\ref{thm:attractor:existence} below) the existence of a global attractor $\AA$ for the long-time dynamics of $S(t)$ on the phase space $H^1$. 

\begin{theorem}[\bf Existence of a global attractor]
\label{thm:attractor:existence}
The solution map $S \colon [0,\infty) \times H^1 \to H^1$ associated to \eqref{eq:SQG:1}--\eqref{eq:SQG:2} with $f\in L^\infty \cap H^1$, possesses a global attractor $\AA$ which is an compact invariant connected set, with $S(t) \AA = \AA$ for all $t \in \RR$, and such that for every $\theta_0 \in H^1$ we have
\[
\lim_{t\to \infty} {\rm dist}(S(t) \theta_0, \AA) = 0.
\]
The set $\AA$ is maximal in the sense that for any bounded subset $\BB_1 \subset H^{1+\delta}$ with $\delta>0$, which is invariant under $S(t)$, obeys $\BB_1 \subset \AA$. Moreover, there exists $M_{\AA}$ which depends only on $\kappa, \|f\|_{L^\infty\cap H^1}$, and universal constants, such that 
if $\theta \in \AA$, we have that 
\begin{align}
\| \theta \|_{H^{3/2}} \leq M_{\AA} \label{eq:M:32:A}
\end{align}
and
\begin{align}
\frac{1}{T} \int_{t}^{t+T} \|S(\tau) \theta\|_{H^2}^2 d\tau \leq M_{\AA}^2 \label{eq:M:2:A}
\end{align}
for any $T>0$ and $t \in \RR$. In particular, for $\theta \in \AA$ we have $\| S(t) \theta \|_{H^2} \leq M_{\AA}$ for almost every $t$.
\end{theorem}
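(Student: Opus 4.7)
The plan is to apply the classical construction of a global attractor for continuous semigroups possessing a compact absorbing set (as in~\cite{Temam97,Hale88,Ladyzhenskaya91}). All needed ingredients are in place from earlier sections: Theorem~\ref{thm:H:3/2:absorbing} provides a set $\BB$ which absorbs every bounded subset of $H^1$ and is bounded in $H^{3/2}$ by a constant $M_\AA$ depending only on $\kappa$ and $\|f\|_{L^\infty\cap H^1}$, hence precompact in $H^1$ via the compact embedding $H^{3/2}\hookrightarrow H^1$; Proposition~\ref{prop:continuity} gives continuity of $S(t)\colon H^1 \to H^1$; and Proposition~\ref{prop:backwards} supplies backward uniqueness on $\BB$. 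I first enlarge $\BB$ slightly to a closed, convex, $H^{3/2}$-bounded absorbing set in $H^1$ (for instance, by intersecting the closed convex hull of $\BB$ in $H^{3/2}$ with a sufficiently large $H^{3/2}$-ball), so that $\BB$ is in addition connected; this does not alter any of the stated bounds.

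Next I would define
\[
\AA = \omega(\BB) = \bigcap_{s\geq 0} \overline{\bigcup_{t\geq s} S(t)\BB}^{H^1}
\]
and verify the standard properties of the $\omega$-limit set of a compact absorbing set: nonemptiness, compactness, connectedness (from connectedness of $\BB$ and continuity of $S(t)$), positive invariance $S(t)\AA \subset \AA$, and attraction of every bounded set of $H^1$. In particular $\dist_{H^1}(S(t)\theta_0, \AA) \to 0$ for every $\theta_0 \in H^1$ via the fact that $\BB$ is absorbing. For the reverse inclusion $\AA \subset S(t)\AA$ I would use a backward-selection argument: given $\theta = \lim_n S(t_n)\xi_n \in \AA$ with $\xi_n \in \BB$ and $t_n \to \infty$, the sequence $\eta_n := S(t_n - t)\xi_n$ lies in $\BB$ for large $n$, admits an $H^1$-limit point $\eta \in \AA$ by compactness of $\BB$, and satisfies $S(t)\eta = \theta$ by continuity. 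Combining with injectivity of $S(t)$ on $\AA$ from Proposition~\ref{prop:backwards} extends $S$ to a continuous group on $\AA$, so $S(t)\AA = \AA$ for every $t \in \RR$.

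For maximality, if $\BB_1 \subset H^{1+\delta}$ is bounded and invariant, it is bounded in $H^1$, so $\dist_{H^1}(\BB_1, \AA) = \dist_{H^1}(S(t)\BB_1, \AA) \to 0$; closedness of $\AA$ then forces $\BB_1 \subset \AA$. The quantitative bound~\eqref{eq:M:32:A} is immediate from $\AA \subset \BB$ and the $H^{3/2}$-boundedness of $\BB$. For~\eqref{eq:M:2:A} I take $\theta \in \AA$, use invariance to place $S(\tau)\theta \in \AA \subset \BB$ for every $\tau \in \RR$, test the equation with $\Lambda^3 \theta$ and integrate from $t$ to $t+T$; the commutator estimates already employed in the proof of Theorem~\ref{thm:H:3/2:absorbing}, together with the uniform $H^{3/2}$ bound, let me absorb the nonlinear contribution into the $\kappa\|\theta\|_{H^2}^2$ dissipation, yielding the time-averaged bound at the cost of enlarging $M_\AA$ by a factor depending only on $\kappa$ and $\|f\|_{H^1}$. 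The pointwise a.e. bound follows by sending $T \to 0^+$ and invoking Lebesgue differentiation on the nonnegative function $\tau \mapsto \|S(\tau)\theta\|_{H^2}^2$.

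The main obstacle I anticipate is the passage from positive invariance $S(t)\AA \subset \AA$ to full invariance $S(t)\AA = \AA$ for $t \in \RR$; this requires both the backward-selection argument (which uses the $H^1$-compactness of $\BB$ in an essential way) and backward uniqueness from Proposition~\ref{prop:backwards}. Everything else is a mechanical application of the general attractor theory, so the substantive work has already been done upstream in Theorem~\ref{thm:H:3/2:absorbing} and the auxiliary Propositions~\ref{prop:continuity} and~\ref{prop:backwards}.
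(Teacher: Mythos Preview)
Your approach is essentially the same as the paper's: invoke the classical attractor machinery (the paper cites~\cite{ConstantinFoias88}) using the compact absorbing set $\BB$ from Theorem~\ref{thm:H:3/2:absorbing}, the continuity of Proposition~\ref{prop:continuity}, and the backward uniqueness of Proposition~\ref{prop:backwards}; the paper writes $\AA = \bigcap_{t>0} S(t)\BB$ rather than $\omega(\BB)$, but since $\BB$ is compact in $H^1$ and eventually positively invariant these coincide.

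There is, however, one overstatement you should correct: Theorem~\ref{thm:H:3/2:absorbing} does \emph{not} assert that $\BB$ absorbs every bounded subset of $H^1$ uniformly. The entering time $t_{H^{3/2}}(\theta_0,f)$ depends on $\theta_0$ itself, not merely on $\|\theta_0\|_{H^1}$, because the local existence time in $H^1$ is not controlled by the $H^1$ norm alone (this is precisely the content of Remark~\ref{rem:uniform}). Uniform absorption is available only for bounded sets in $H^{1+\delta}$ with $\delta>0$, which is why the theorem's maximality clause and the paper's argument are phrased for $\BB_1 \subset H^{1+\delta}$. Fortunately this does not damage your construction: $\BB$ is itself bounded in $H^{3/2}$, so by Remark~\ref{rem:uniform} there is a single time $t_\BB$ with $S(t)\BB \subset \BB$ for $t \geq t_\BB$, and pointwise absorption of each $\theta_0 \in H^1$ suffices for the pointwise attraction statement $\dist(S(t)\theta_0,\AA) \to 0$ actually claimed in the theorem. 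Just drop the phrase ``absorbs every bounded subset of $H^1$'' and the claim of ``attraction of every bounded set of $H^1$,'' and your argument goes through. Finally, your re-derivation of~\eqref{eq:M:2:A} via the $\Lambda^3\theta$ pairing is unnecessary: the paper simply takes $M_\AA = \max\{M_{3/2,f}, M_{2,f}\}$ using~\eqref{eq:M:2:f}, which is already established in Theorem~\ref{thm:H:3/2:absorbing}.
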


The proof of Theorem~\ref{thm:attractor:existence}, given at the end of this section, follows closely the steps outlined in \cite{ConstantinFoias88}, and relies on the following main ingredients: 
\begin{itemize}
\item[(i)] There exists a compact absorbing set $\BB$ (which is a ball around the origin in $H^{3/2}$) for the dynamics induced by $S(t)$ on the phase space $H^1$ (cf.~Theorem~\ref{thm:H:3/2:absorbing} below).
\item[(ii)] The solution map $S(t) \colon H^1 \to H^1$ is injective on $\BB$ (cf.~Proposition~\ref{prop:backwards}).
\item[(iii)] For each $\theta_0 \in H^1$ the solution $S(t)\theta_0 \colon [0,\infty)  \to H^1$ is a continuous function of $t$, and for fixed $t>0$, we have that $S(t) \colon \BB \to H^1$ is a Lipschitz continuous function of $\theta_0$ (cf.~Proposition~\ref{prop:continuity}).
\end{itemize}
Establishing (i), the existence of a compact absorbing ball, turns out to be the most important step. For this we need to use the global regularity {\em twice}. From the local existence of solutions we pick up a time when a $C^{\alpha}$ norm of the solution is finite, with $\alpha$ small. Then we guarantee first that the solution satisfies strong bounds for all time, but the bounds depend of the initial data. However, after long enough time the $L^{\infty}$ norm of the solution obeys a bound that no longer depends on initial data (its size depends solely on force). At that time, because we have guaranteed that the solution remained smooth enough in the meantime, we apply again the $C^\alpha$ persistence result, but this time the size of $\theta$ in $L^{\infty}$ is given by $f$, which permits a calculation with an $\alpha$ that depends only on $f$. After an additional time, we obtain a bound of this $C^{\alpha}$ norm that depends only on $f$. At this stage, because the bounds make the situation subcritical, with constants which depend on $f$ only, we bootstrap in regularity and obtain that the size of the $H^{3/2}$ norm is determined by $f$ alone.

\begin{theorem}[\bf Absorbing ball in $H^{3/2}$]
\label{thm:H:3/2:absorbing}
Let $\theta_0 \in H^1$ and $f \in L^\infty \cap H^1$.
There exists a  time $t_{H^{3/2}} = t_{H^{3/2}}(\theta_0,f)$ and an $M_{3/2,f} = M_{3/2,f} (\|f\|_{L^\infty \cap H^1})$ such that  for all $t\geq t_{H^{3/2}}$ we have
\begin{align}
\|S(t) \theta_0\|_{H^{3/2}} \leq M_{3/2,f}.
\label{eq:H32:absorbing:ball}
\end{align}
That is, 
\begin{align}
\BB = \{ \theta \in H^{3/2} \colon \|\theta\|_{H^{3/2}} \leq M_{3/2,f} \}
\end{align}
is an absorbing set. Moreover, there exists an $M_{2,f} = M_{2,f} (\|f\|_{L^\infty \cap H^1})$, such that 
\begin{align}
\frac{1}{T} \int_{t}^{t+T} \|S(\tau) \theta_0\|_{H^2}^{2} d\tau \leq M_{2,f}^2
\label{eq:M:2:f}
\end{align}
for any $t\geq t_{H^{3/2}}$ and any $T > 0$.
\end{theorem}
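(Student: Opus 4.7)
The plan is a three-stage cascade in which each stage progressively strips the dependence on $\theta_0$ from the bound, first in $L^\infty$, then in $C^{\alpha^\star}$ for a small universal Hölder exponent $\alpha^\star$ depending only on $f$, and finally in $H^{3/2}$. First I would invoke Proposition~\ref{prop:Lp} (specifically \eqref{eq:Linfty:decay}): since the transient $\|\theta_0\|_{L^\infty}e^{-tc_0\kappa}$ decays exponentially, there is a time $t_1 = t_1(\theta_0,f)$ after which
\[
\|S(t)\theta_0\|_{L^\infty} \leq M_\infty^\star := \frac{2}{c_0\kappa}\|f\|_{L^\infty},
\]
a quantity depending only on $f$ and $\kappa$. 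The key point is that although $t_1$ depends on $\theta_0$, the bound $M_\infty^\star$ does not.

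For the second stage, observe that by Theorem~\ref{thm:global:regularity} together with the local smoothing \eqref{eq:local:smoothing}, the orbit lies in $H^{3/2}\hookrightarrow C^{\alpha}$ for every $\alpha<1/2$ at each positive time, and in particular $[\theta(t_1)]_{C^\alpha}$ is finite. Now set
\[
\alpha^\star := \min\Bigl\{\frac{\eps_0\kappa}{M_\infty^\star},\,\frac{1}{4}\Bigr\},
\]
which is exactly the universal Hölder threshold \eqref{eq:alpha:0:def} evaluated at $M_\infty = M_\infty^\star$, so $\alpha^\star$ depends only on $f$ and $\kappa$. Restarting the dynamics at $t_1$ and applying Theorem~\ref{thm:Holder:weak} with $\alpha=\alpha^\star$, the $L^\infty$ bound driving the Hölder propagation is now the data-free quantity $M_\infty^\star$, so the long-time estimate \eqref{eq:M:alpha:long:time} produces a further waiting time $t_2 = t_1 + t_{\alpha^\star}$ (still $\theta_0$-dependent through $[\theta(t_1)]_{C^{\alpha^\star}}$ via \eqref{eq:t:alpha:def}) past which
\[
[S(t)\theta_0]_{C^{\alpha^\star}} \leq \frac{2 M_\infty^\star}{\eps_0}.
\]
After $t_2$, both $\|\theta(t)\|_{L^\infty}$ and $[\theta(t)]_{C^{\alpha^\star}}$ are controlled purely in terms of $f$ and $\kappa$.

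Third, I would bootstrap from this supercritical a priori $C^{\alpha^\star}$ bound up to $H^{3/2}$ via an $H^{3/2}$ energy estimate:
\[
\frac{1}{2}\frac{d}{dt}\|\Lambda^{3/2}\theta\|_{L^2}^2 + \kappa\|\Lambda^2\theta\|_{L^2}^2 = -\int \Lambda^{3/2}(u\cdot\nabla\theta)\,\Lambda^{3/2}\theta\,dx + \int \Lambda^{3/2}f\cdot \Lambda^{3/2}\theta\,dx.
\]
Using the $C^\alpha$-variant of the nonlinear lower bound for the fractional Laplacian from \cite[Theorem~2.2]{ConstantinVicol12} on $\Lambda^{3/2}\theta$, the advection term is dominated by $\frac{\kappa}{2}\|\Lambda^2\theta\|_{L^2}^2$ modulo a prefactor involving only $[\theta]_{C^{\alpha^\star}}$ and $\|\theta\|_{L^\infty}$, both of which are now controlled by $f$ alone; hiding the forcing term by Young's inequality (using $f\in H^1$) yields, for $t\geq t_2$,
\[
\frac{d}{dt}\|\Lambda^{3/2}\theta\|_{L^2}^2 + \kappa\|\Lambda^2\theta\|_{L^2}^2 \leq C_f\|\Lambda^{3/2}\theta\|_{L^2}^2 + C_f\|f\|_{H^1}^2,
\]
where $C_f$ depends only on $\|f\|_{L^\infty\cap H^1}$ and $\kappa$. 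Together with the basic $L^2$ energy inequality (which already confers absorption in $L^2_tH^{1/2}_x$, and hence by interpolation with the $C^{\alpha^\star}$ bound in $L^2_tH^1_x$ after time averaging), the uniform Gronwall lemma applied over sliding unit intervals past $t_2$ then produces a pointwise bound $\|\theta(t)\|_{H^{3/2}}\leq M_{3/2,f}$ for $t\geq t_{H^{3/2}} := t_2 + 1$, while direct time integration yields the averaged $H^2$ bound \eqref{eq:M:2:f}.

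The main obstacle is this last bootstrap step. With only the a priori $L^\infty$ bound, the $H^{3/2}$ energy identity is scaling-critical and the transport commutator cannot be dominated by the dissipation; the strict gain $\alpha^\star>0$ extracted in the second stage is what makes the problem mildly subcritical, and only the $C^\alpha$ refinement of the nonlinear lower bound for $\Lambda$ converts that qualitative gain into the quantitative domination needed to close the estimate. Careful bookkeeping is required throughout the cascade so that the final constants $M_{3/2,f}$ and $M_{2,f}$ inherit their dependence only from $\|f\|_{L^\infty\cap H^1}$ and $\kappa$, with $\theta_0$ entering only through the waiting time $t_{H^{3/2}}$.
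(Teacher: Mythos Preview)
Your three-stage cascade matches the paper's architecture (Lemma~\ref{lem:C:alpha:absorbing} for the first two stages, then the proof of Theorem~\ref{thm:H:3/2:absorbing}), and Stages~1--2 are essentially right. The gap is in Stage~3. First, the interpolation claim fails: $L^2_tH^{1/2}_x$ interpolated with $L^\infty_tC^{\alpha^\star}_x$ cannot yield $L^2_tH^1_x$, since $C^{\alpha^\star}$ carries at most $\alpha^\star\le 1/4$ derivatives and interpolation never lands above both endpoints; and even $L^2_tH^1_x$ would not suffice, since the uniform Gr\"onwall lemma applied to your $H^{3/2}$ differential inequality requires an $f$-only bound on $\int_t^{t+1}\|\theta\|_{H^{3/2}}^2\,ds$. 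Second, invoking the $C^\alpha$ nonlinear lower bound ``on $\Lambda^{3/2}\theta$'' is not how that estimate operates: it is the pointwise bound $D[\phi](x)\gtrsim |\phi(x)|^{(3-\alpha)/(1-\alpha)}/[\theta]_{C^\alpha}^{1/(1-\alpha)}$, which enters via the identity $2\phi\Lambda\phi=\Lambda(\phi^2)+D[\phi]$ and has no direct analogue producing a $D[\Lambda^{3/2}\theta]$ term in the $H^{3/2}$ energy identity. Consequently your linear inequality $\frac{d}{dt}\|\theta\|_{H^{3/2}}^2+\kappa\|\theta\|_{H^2}^2\le C_f\|\theta\|_{H^{3/2}}^2+C_f$ is not justified; the standard commutator bound gives instead $\|\theta\|_{H^{3/2}}^4/\kappa$ on the right (cf.~\eqref{eq:H32:ODE:1}).

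The paper closes the gap with an intermediate $H^1$ step: apply $\nabla$ to the equation, multiply pointwise by $\nabla\theta$, and use Proposition~\ref{prop:pointwise} to get \eqref{eq:grad:PDE:1}. The $C^{\alpha^\star}$ nonlinear lower bound \eqref{eq:D:grad:nonlinear} on $D[\nabla\theta]$, together with the inner--outer velocity splitting \eqref{eq:grad:nonlinear}, then absorbs the cubic term $|\nabla u||\nabla\theta|^2$ into the dissipation (here the strict gain $\alpha^\star>0$ is used exactly as you anticipate). Integrating over $\TT^2$ yields \eqref{eq:grad:ODE:1}, whence both the $H^1$ absorbing ball \eqref{eq:H1:absorbing:ball} and, crucially, the $f$-only time average $\int_t^{t+1}\|\theta\|_{H^{3/2}}^2\,ds\le C_f$ in \eqref{eq:H32:time:average}. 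Only then does the $H^{3/2}$ energy estimate \eqref{eq:H32:ODE:2}, fed into the uniform Gr\"onwall Lemma~\ref{lem:Gronwall}, produce \eqref{eq:M:32:f:def} and \eqref{eq:M:2:f:def}. A minor additional point: since $\theta_0\in H^1(\TT^2)$ need not lie in $L^\infty$, you cannot apply \eqref{eq:Linfty:decay} from $t=0$; one must first advance a short time via local smoothing \eqref{eq:local:smoothing} into $H^2\subset L^\infty$, as in the proof of Lemma~\ref{lem:C:alpha:absorbing}.
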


As described in the above outline, in order to prove Theorem~\ref{thm:H:3/2:absorbing}, we first need to show that after waiting long enough time, the solution belongs to a H\"older space, with both the H\"older exponent and the H\"older norm, independent of the initial data. We achieve this in the following lemma, by combining the estimates established in Proposition~\ref{prop:Lp} and Theorem~\ref{thm:Holder:propagation} (respectively Theorem~\ref{thm:Holder:weak}).

\begin{lemma}[\bf Absorbing ball in $C^\alpha$]\label{lem:C:alpha:absorbing}
Let $\theta_0 \in H^1$, $f \in L^\infty \cap H^1$, and define the H\"older exponent
\begin{align}
\alpha_* = \alpha_*(\|f\|_{L^\infty}) := \min \left\{ \frac{\eps_1 \kappa^2}{\|f\|_{L^\infty}}, \frac 14 \right\}
\label{eq:alpha:*}
\end{align}
where $\eps_1 >0$ is a universal constant.
There exists a time
$t_{\alpha_*}=  t_{\alpha_*} ( \theta_0, f)$, such that 
\begin{align}
\|S(t) \theta_0 \|_{C^{\alpha_*}} \leq M_{\infty,f} := \frac{2 \|f\|_{L^\infty}}{ \eps_1 \kappa} 
\label{eq:Holder:absorbing:ball}
\end{align}
for all $t \geq t_{\alpha_*}$.
\end{lemma}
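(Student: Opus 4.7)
The plan is to combine the absorbing property in $L^\infty$ from Proposition~\ref{prop:Lp} with the dynamic propagation of H\"older regularity in Theorem~\ref{thm:Holder:weak}, by restarting the latter from a time at which the $L^\infty$ norm of the solution no longer depends on $\theta_0$. The whole difficulty is a bookkeeping one: in Theorem~\ref{thm:Holder:weak}, both the admissible exponent $\alpha_0$ and the long-time bound $M_\alpha(t)$ depend on $M_\infty = \|\theta_0\|_{L^\infty} + \|f\|_{L^\infty}/(c_0\kappa)$, which is still contaminated by the initial data. We need to wait long enough for this contamination to die out.

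First I would fix a time $t_1 = t_1(\theta_0,f)$ obtained from the decay estimate \eqref{eq:Linfty:decay}, chosen so that
\[
\|S(t)\theta_0\|_{L^\infty} \leq \frac{2}{c_0 \kappa}\, \|f\|_{L^\infty} =: M_\infty^\sharp
\qquad \text{for all } t \geq t_1.
\]
Explicitly, any $t_1 \geq (c_0\kappa)^{-1} \log\bigl(1+ c_0 \kappa \|\theta_0\|_{L^\infty}/\|f\|_{L^\infty}\bigr)$ works. At time $t_1$ the solution already lies in $H^{3/2}$ by Theorem~\ref{thm:global:regularity}, hence in $C^\beta$ for every $\beta\in(0,1/2)$; in particular $\theta(\cdot,t_1)\in C^{\alpha_*}$ with finite, initial-data-dependent norm.

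Next I would restart the evolution at time $t_1$, regarding $\theta(\cdot,t_1)$ as new initial data and applying Theorem~\ref{thm:Holder:weak} on $[t_1,\infty)$. The crucial point is that the new ``$M_\infty$'' appearing in the hypotheses of that theorem can be bounded by $M_\infty^\sharp$, which depends \emph{only} on $\|f\|_{L^\infty}$ and $\kappa$. Hence the admissible H\"older exponent from \eqref{eq:alpha:0:def} becomes
\[
\alpha_0 = \min\!\left\{\frac{\eps_0\kappa}{M_\infty^\sharp},\,\tfrac14\right\}
= \min\!\left\{\frac{\eps_0 c_0 \kappa^2}{2\|f\|_{L^\infty}},\,\tfrac14\right\},
\]
and we may choose $\eps_1 = \eps_0 c_0 /2$ so that the $\alpha_*$ of \eqref{eq:alpha:*} satisfies $\alpha_* \leq \alpha_0$.

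Finally, I would invoke the long-time bound \eqref{eq:M:alpha:long:time} of Theorem~\ref{thm:Holder:weak}: there exists $\tau_* = t_{\alpha_*}(M_\infty^\sharp, [\theta(\cdot,t_1)]_{C^{\alpha_*}})$, given explicitly by \eqref{eq:t:alpha:def}, such that for all $s \geq \tau_*$,
\[
[\,\theta(\cdot,t_1+s)\,]_{C^{\alpha_*}} \;\leq\; \frac{2 M_\infty^\sharp}{\eps_0}
\;\leq\; \frac{2\|f\|_{L^\infty}}{\eps_1 \kappa} \;=\; M_{\infty,f}.
\]
Setting $t_{\alpha_*}(\theta_0,f) := t_1 + \tau_*$ then yields \eqref{eq:Holder:absorbing:ball}.

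The only subtle point is checking that the waiting time $\tau_*$ is indeed finite and depends only on $\theta_0$ and $f$, not on extra parameters; this is clear from \eqref{eq:t:alpha:def} once we use the smoothing estimate \eqref{eq:local:smoothing} (applied at time $t_1$, with $\beta = 1/2$, say) to dominate $[\theta(\cdot,t_1)]_{C^{\alpha_*}}$ by an expression involving $\|\theta_0\|_{H^1}$, $\|f\|_{H^1}$, $\kappa$, and $t_1$. No other obstacle arises: the H\"older exponent $\alpha_*$ and the eventual norm $M_{\infty,f}$ are genuinely independent of $\theta_0$, precisely because by the time we invoke Theorem~\ref{thm:Holder:weak} the $L^\infty$ norm of the solution has already forgotten the initial data.
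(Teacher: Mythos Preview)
Your strategy is exactly the paper's: wait until the $L^\infty$ norm has forgotten $\theta_0$, then restart the H\"older propagation of Theorem~\ref{thm:Holder:weak} with an exponent $\alpha_*$ depending only on $\|f\|_{L^\infty}$, and finally invoke the long-time bound \eqref{eq:M:alpha:long:time}. There is, however, a genuine gap in your first step. You apply the decay estimate \eqref{eq:Linfty:decay} from $t=0$, and your explicit formula for $t_1$ involves $\|\theta_0\|_{L^\infty}$. But the lemma only assumes $\theta_0\in H^1$, and in two dimensions $H^1\not\hookrightarrow L^\infty$; thus $\|\theta_0\|_{L^\infty}$ may be infinite and your $t_1$ undefined. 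The estimate \eqref{eq:Linfty:decay} simply does not apply at $t=0$ under these hypotheses.

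The paper repairs this with a preliminary step you omit: it first uses the local smoothing \eqref{eq:local:smoothing} of Proposition~\ref{prop:local} to produce a time $t_0=t_0(\theta_0,f)>0$ at which $S(t_0)\theta_0\in H^2\subset L^\infty\cap C^{1/4}$, with $\|S(t_0)\theta_0\|_{L^\infty}$ controlled by $\|\theta_0\|_{H^1}$ and $\|f\|_{H^1}$. Only then is the $L^\infty$ decay applied, with $S(t_0)\theta_0$ playing the role of the initial datum. (The paper also runs Theorem~\ref{thm:Holder:propagation} once with a data-dependent exponent $\alpha_1$ and bootstraps to $H^{3/2}$ in order to guarantee $S(t_0+t_1)\theta_0\in C^{\alpha_*}$; you shortcut this by citing Theorem~\ref{thm:global:regularity}, which is legitimate but rests on precisely that argument.) Once you insert the preliminary smoothing step so that the $L^\infty$ decay is launched from a positive time, your proof is essentially the paper's.
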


\begin{proof}[Proof of Lemma~\ref{lem:C:alpha:absorbing}]
By Proposition~\ref{prop:local} there exists 
$
t_0 = t_0 (\theta_0,f) > 0
$
such that $S(t_0) \theta_0 \in H^2$ and moreover by \eqref{eq:local:smoothing} we have
\begin{align}
\| S(t_0) \theta_0 \|_{H^2} \leq C \kappa^{-1} t_0^{-1} \|\theta_0\|_{H^1} + C  \kappa^{-1} \|f\|_{H^1} . 
\label{eq:S:t0:H2}
\end{align}
By the Sobolev embedding it follows from \eqref{eq:S:t0:H2} that
\begin{align}
\| S(t_0)\theta_0\|_{C^{1/4}} 
&\leq C \| S(t_0) \theta_0 \|_{H^2} \notag\\
&\leq C  \kappa^{-1} t_0^{-1} \|\theta_0\|_{H^1} + C  \kappa^{-1} \|f\|_{H^1}  = C(\kappa, \theta_0,f).
\label{eq:S:t0:C14}
\end{align}
In particular, $\| S(t_0) \theta_0\|_{L^\infty} \leq C(\kappa, \theta_0,f)$ holds.

Throughout this proof the value of $C(\kappa, \theta_0,f)$ may change from line to line, since we just want to emphasize the dependence of this bound solely on data and force.

We now apply Theorem~\ref{thm:Holder:propagation} with initial data  $S(t_0)\theta_0$.
Let $\eps_0$ be the constant in Theorem~\ref{thm:Holder:propagation}, and define 
\begin{align*}
\alpha_1 = \min \left\{ \frac{\eps_0\kappa}{M_\infty(S(t_0)\theta_0,f)}, \frac 14 \right\}
\end{align*}
where
\begin{align*}
M_\infty(S(t_0)\theta_0,f) = \|S(t_0)\theta_0\|_{L^\infty} + \frac{\|f\|_{L^\infty}}{c_0 \kappa}
\end{align*}
is as defined as in \eqref{eq:Mp:def}, with corresponding constant $c_0$. 
Since  $S(t_0)\theta_0 \in C^{1/4} \subseteq C^{\alpha_1}$ and \eqref{eq:S:t0:C14} holds, 
it follows from  estimate \eqref{eq:M:alpha:global}, that
\begin{align*}
[S(t) \theta_0]_{C^{\alpha_1}} \leq C(\kappa, \theta_0,f)
\end{align*}
for all $t \geq t_0$. Since in fact we know $S(t_0) \theta_0 \in H^{3/2}$, we bootstrap the above global in time estimate for the $C^{\alpha_1}$ norm, which is a subcritical quantity, to obtain
\begin{align}
\| S(t)\theta_0\|_{H^{3/2}} \leq C(\kappa, \theta_0,f)
\label{eq:S:t0:H32}
\end{align}
for all $t\geq t_0$. We refer to the Proof of Theorem~\ref{thm:H:3/2:absorbing} for the main idea in this bootstrap argument.

We now apply Proposition~\ref{prop:Lp}, with initial data $S(t_0)\theta_0 \in L^\infty$ bounded as in \eqref{eq:S:t0:C14}, to conclude that there exists 
\begin{align*}
t_1  = t_1 (\theta_0,f) = \frac{1}{c_0 \kappa} \log \left( 1 + \frac{c_0 \kappa \|S(t_0) \theta_0\|_{L^\infty}}{\|f\|_{L^\infty}} \right)
\end{align*}
such that 
\begin{align*}
\|S(t) \theta_0\|_{L^\infty} \leq \frac{2 \|f\|_{L^\infty}}{c_0\kappa}  
\end{align*}
for all $t \geq t_1 + t_0$. 

Now finally define
\begin{align*}
\alpha_* = \min \left\{ \frac{\eps_0 c_0 \kappa^2}{2 \|f\|_{L^\infty}}, \frac 14 \right\}
\end{align*}
and apply the argument in Theorem~\ref{thm:Holder:propagation}, with initial data taken to be $S(t_0 + t_1)\theta_0 \in H^{3/2} \subseteq  C^{\alpha_*}$.
We conclude from \eqref{eq:M:alpha:long:time} and \eqref{eq:S:t0:H32} that there exists 
$
t_{\alpha_*} = t_{\alpha_*}(\theta_0,f) 
$
with $t_1 + t_0 \leq t_{\alpha_*} < \infty$, such that 
\begin{align*}
\| S(t)\theta_0 \|_{C^{\alpha_*}} \leq \frac{4 \|f\|_{L^\infty}}{c_0\eps_0 \kappa}  
\end{align*}
holds for all $t \geq t_{\alpha_*}$, which concludes the proof the theorem.
\end{proof}

The proof of Theorem~\ref{thm:H:3/2:absorbing} now follows from Lemma~\ref{lem:C:alpha:absorbing} and a bootstrap procedure, which is based on the sub-criticality of the H\"older norm and the nonlinear lower bound on the fractional Laplacian~ \cite{ConstantinVicol12}.

\begin{proof}[Proof of Theorem~\ref{thm:H:3/2:absorbing}] Let $\alpha_* = \alpha_*(\|f\|_{L^\infty})$, and $t_{\alpha_*} = t_{\alpha_*}(\theta_0,f)$ be as defined in Lemma~\ref{lem:C:alpha:absorbing}.
Using estimate \eqref{eq:Holder:absorbing:ball}, we have that 
\begin{align}
\| S(t) \theta_0\|_{C^{\alpha_*}} \leq M_{\infty,f} \quad \left( := \frac{2 \|f\|_{L^\infty}}{\eps_1 \kappa}  \right)
\label{eq:C:alpha:global:bnd}
\end{align}
for all $t \geq t_{\alpha_*}$, and 
moreover, by \eqref{eq:S:t0:H32} we know that 
\begin{align}
\|S(t) \theta_0\|_{H^{3/2}}^2 \leq C(\kappa, \theta_0, f) < \infty
\label{eq:H32:bad:global:bound}
\end{align}
for all $t \geq t_{\alpha_*}$.

For the rest of the proof, denote
\begin{align}
\theta_0^* = S(t_{\alpha_*}) \theta_0.
\label{eq:theta:0:*}
\end{align}
The first step is to obtain a bound on time averages of the $H^{3/2}$ norm of the solution. We apply $\nabla$ to \eqref{eq:SQG:1} and pointwise in $x$ take inner product with $\nabla \theta$, and apply Proposition~\ref{prop:pointwise} to obtain
\begin{align}
\left( \partial_t + u \cdot \nabla + \kappa \Lambda \right) |\nabla \theta|^2 + \kappa D[\nabla \theta] = - 2  \partial_k u_j \partial_j \theta \partial_k \theta +2  \nabla f \cdot \nabla \theta
\label{eq:grad:PDE:1}
\end{align}
where
\begin{align*}
D[\nabla \theta](x) = \frac{1}{2\pi} P.V. \int_{\RR^2} |\nabla \theta(x)  - \nabla \theta(x+y) |^2 \frac{1}{|y|^3} dy
\end{align*}
and we use the same notation for the $\TT^2$-periodic function $\nabla \theta$, and its extension to all of $\RR^2$ by periodicity.
The main observation here is that since we already have from \eqref{eq:C:alpha:global:bnd} a bound for $\sup_{t \geq t_{\alpha_*}} \|S(t) \theta_0\|_{C^{\alpha_*}}$, 
we have an {\em improved nonlinear lower bound} on $D[\nabla \theta]$. Indeed, from~\cite[Theorem 2.2]{ConstantinVicol12} we have 
\begin{align}
D[\nabla \theta](x) \geq \frac{|\nabla \theta(x)|^{\frac{3-\alpha_*}{1-\alpha_*}}}{C [\theta]_{C^{\alpha_*}}^{\frac{1}{1-\alpha_*}}}
\label{eq:D:grad:nonlinear}
\end{align}
where the constant $C$ depends only on $\alpha_*$, and is uniformly bounded for $\alpha_* \in (0,1/2]$. Combining \eqref{eq:D:grad:nonlinear} with \eqref{eq:C:alpha:global:bnd}, we arrive at
\begin{align}
D[\nabla \theta](x,t) 
\geq \frac{ |\nabla \theta(x,t)|^{\frac{3-\alpha_*}{1-\alpha_*}}}{c_7 M_{\infty,f}^{\frac{1}{1-\alpha_*}}} 
\label{eq:D:grad:alpha}
\end{align}
for all $t\geq t_{\alpha_*}$, where $c_7$ is a universal constant which is independent of $\alpha_*$, for $\alpha_* \in (0,1/2]$.

Next, we estimate the nonlinear term on the right side of \eqref{eq:grad:PDE:1}. Let $\chi$ be a smooth cutoff function, that is $1$ on $[0,1]$, non-increasing, vanishes on $[2,\infty)$, and obeys $|\chi'|\leq 2$. For $\rho > 0$ to be determined, using the same argument which led to \eqref{eq:delta:h:u}, we obtain
\begin{align*}
| \nabla u(x) | 
&\leq \frac{1}{2\pi} \left| P.V. \int_{\RR^2} \frac{y^\perp}{|y|^3} \chi\left( \frac{|y|}{\rho} \right) \nabla \theta(x+y) dy \right| + \frac{1}{2\pi} \left|  \int_{\RR^2} \frac{y^\perp}{|y|^3}\left(1- \chi\left( \frac{|y|}{\rho} \right) \right)\nabla \theta(x+y) dy \right| \notag\\
&\leq C \Bigl( \rho D[\nabla \theta](x) \Bigr)^{1/2} + C \frac{\|\theta\|_{L^\infty}}{\rho}
\end{align*}
for some universal constant $C>0$.
Therefore, we have
\begin{align}
2 |\nabla u(x)| |\nabla \theta(x)|^2 
&\leq C \Bigl( \rho D[\nabla \theta](x) \Bigr)^{1/2} |\nabla \theta(x)|^2 + C \frac{\|\theta\|_{L^\infty}}{\rho} |\nabla \theta(x)|^2 \notag\\
&\leq \frac{\kappa}{2} D[\nabla \theta](x) + C \left( \frac{\rho}{\kappa} |\nabla \theta (x)|^4 + \frac{\|\theta\|_{L^\infty}}{\rho} |\nabla \theta(x)|^2 \right) \notag\\
&\leq \frac{\kappa}{2} D[\nabla \theta](x)  + \frac{C}{\kappa^{1/2}} \|\theta\|_{L^\infty}^{1/2} |\nabla \theta(x)|^3
\label{eq:grad:nonlinear}
\end{align}
by letting $\rho = \kappa^{1/2} \|\theta\|_{L^\infty}^{1/2} |\nabla \theta(x)|^{-1}$.

Since $t \geq t_{\alpha_*}$, we combine \eqref{eq:C:alpha:global:bnd} with \eqref{eq:grad:PDE:1}, \eqref{eq:D:grad:alpha}, and \eqref{eq:grad:nonlinear}, to arrive at
\begin{align}
\left( \partial_t + u \cdot \nabla + \kappa \Lambda \right) |\nabla \theta|^2 + \frac{\kappa}{4} D[\nabla \theta] +  \frac{\kappa  |\nabla \theta(x,t)|^{\frac{3-\alpha_*}{1-\alpha_*}}}{4 c_7 M_{\infty,f}^{\frac{1}{1-\alpha_*}}} \leq 2 |\nabla f| |\nabla \theta| + \frac{c_8 M_{\infty,f}^{1/2} |\nabla \theta|^3}{\kappa^{1/2}}
\label{eq:grad:PDE:2}
\end{align}
for some universal constant $c_8$.
Using the $\eps$-Young inequality, since $(3-\alpha_*)/(1-\alpha_*) > 3$ we furthermore infer from \eqref{eq:grad:PDE:2} that
\begin{align}
\left( \partial_t + u \cdot \nabla +\kappa \Lambda \right) |\nabla \theta|^2 + \frac{\kappa}{4} D[\nabla \theta] +  \frac{\kappa |\nabla \theta(x,t)|^{\frac{3-\alpha_*}{1-\alpha_*}}}{8 c_7 M_{\infty,f}^{\frac{1}{1-\alpha_*}}} \leq 2 |\nabla f| |\nabla \theta| + \frac{c_8 (8c_7)^{\frac{3-3\alpha_*}{2\alpha_*}}}{\kappa^{\frac{9-7 \alpha_*}{4\alpha_*}}} M_{\infty,f}^{\frac{9 - \alpha_*}{4 \alpha_*}}.
\label{eq:grad:PDE:3}
\end{align}
Next we integrate \eqref{eq:grad:PDE:3} over $\TT^2$, and use
\begin{align*}
\frac 12 \int_{\TT^2} D[\nabla \theta](x) dx = \int_{\TT^2} \nabla \theta \cdot \Lambda \nabla \theta dx = \|\theta\|_{H^{3/2}}^2 \geq \|\theta\|_{H^1}^2
\end{align*}
to obtain
\begin{align}
\frac{d}{dt} \|\theta\|_{H^1}^2 + \frac{\kappa}{6} \|\theta\|_{H^1}^2 + \frac{\kappa}{6} \|\theta\|_{H^{3/2}}^2 \leq \frac{6}{\kappa} \|f\|_{H^1}^2 + \frac{c_8 (8c_7)^{\frac{3-3\alpha_*}{2\alpha_*}}}{\kappa^{\frac{9-7 \alpha_*}{4\alpha_*}}} M_{\infty,f}^{\frac{9 - \alpha_*}{4 \alpha_*}}
\label{eq:grad:ODE:1}
\end{align}
for times $t \geq t_{\alpha_*}$. Using the Gr\"onwall inequality we obtain from \eqref{eq:grad:ODE:1} that 
\begin{align}
\|S(t+t_{\alpha_*}) \theta_0\|_{H^1}^2 
&= \|S(t) \theta_0^*\|_{H^1}^2 \notag\\
&\leq \|\theta_0^*\|_{H^1}^2 e^{-\frac{t \kappa}{6}} +   \left(   \frac{36}{\kappa^2} \|f\|_{H^1}^2 + \frac{c_8 (8c_7)^{\frac{3-3\alpha_*}{2\alpha_*}}}{6 \kappa^{\frac{9-3 \alpha_*}{4\alpha_*}}} M_{\infty,f}^{\frac{9 - \alpha_*}{4 \alpha_*}}  \right) (1- e^{-\frac{t \kappa}{6}}).
\label{eq:grad:ODE:2}
\end{align}
Recall cf.~\eqref{eq:H32:bad:global:bound} and \eqref{eq:theta:0:*} that $\|\theta_0^*\|_{H^1}^2 = \|S(t_{\alpha_*})\theta_0\|_{H^1}^2 \leq C(\kappa, \theta_0,f)$. We conclude from \eqref{eq:grad:ODE:2} that there exists 
\begin{align*}
t_{H^1} = t_{H^1}( \theta_0,f) \geq t_{\alpha_*}
\end{align*}
such that for all $t \geq t_{H^1}$ we have
\begin{align}
\|S(t) \theta_0\|_{H^1}^2 \leq  \frac{72}{\kappa^2} \|f\|_{H^1}^2 + \frac{c_8 (8c_7)^{\frac{3-3\alpha_*}{2\alpha_*}}}{3 \kappa^{\frac{9-3 \alpha_*}{4\alpha_*}}} M_{\infty,f}^{\frac{9 - \alpha_*}{4 \alpha_*}} =: M_{1,f}^2.
\label{eq:H1:absorbing:ball}
\end{align}
Note that cf.~\eqref{eq:alpha:*} we have $\alpha_* = \alpha_* (\|f\|_{L^\infty})$ and cf.~\eqref{eq:Holder:absorbing:ball} we have $M_{\infty,f} = M_{\infty,f} (\|f\|_{L^\infty})$, so that $M_{1,f} = M_{1,f} (\kappa, \|f\|_{L^\infty \cap H^1})$. The dependence on $\kappa$ and $f$ may be computed explicitly from  \eqref{eq:alpha:*}--\eqref{eq:Holder:absorbing:ball}  and \eqref{eq:H1:absorbing:ball}.

Inequality \eqref{eq:H1:absorbing:ball} not only gives an absorbing ball in $H^1$, but combined with \eqref{eq:grad:ODE:1}, integrated between $t$ and $t+1$, it also gives the bound
\begin{align}
\int_{t}^{t+1} \|\theta(s)\|_{H^{3/2}}^2 ds \leq \frac{6+\kappa}{\kappa} M_{1,f}^2 
\label{eq:H32:time:average}
\end{align}
for all $t \geq t_{H^1}$.

Estimate \eqref{eq:H32:time:average} now directly implies the existence of an absorbing ball for $S(t)$ in $H^{3/2}$. To see this, we take the $L^2$ inner product of \eqref{eq:SQG:1} with $\Lambda^3 \theta$ and write
\begin{align}
\frac{d}{dt} \|\theta\|_{H^{3/2}}^2 + \kappa \|\theta\|_{H^2}^2 
&\leq \frac{1}{\kappa} \|f\|_{H^1}^2 + 2 \left| \int_{\TT^2} \left( \Lambda^{3/2} (u\cdot \nabla \theta) - u \cdot \nabla \Lambda^{3/2} \right) \Lambda^{3/2} \theta dx \right| \notag\\
&\leq \frac{1}{\kappa} \|f\|_{H^1}^2 + C \|\theta\|_{H^{3/2}} \|\Lambda^{3/2} \theta\|_{L^4} \|\Lambda \theta \|_{L^4} \notag\\
&\leq \frac{1}{\kappa} \|f\|_{H^1}^2 + C \|\theta\|_{H^{3/2}}^2  \| \theta\|_{H^2}  \notag\\
&\leq \frac{1}{\kappa} \|f\|_{H^1}^2 + \frac{\kappa}{2} \|\theta\|_{H^2}^2 + \frac{c_9}{\kappa} \|\theta\|_{H^{3/2}}^4
\label{eq:H32:ODE:1}
\end{align}
for some universal constant $c_9 > 0$.
In the above estimate we have appealed to the commutator estimate \eqref{eq:Hs:commutator} of Lemma~\ref{lem:fractional:calculus}, and we used the the Sobolev embedding $H^{1/2} \subset L^4$.
We obtain from \eqref{eq:H32:ODE:1} that 
\begin{align}
\frac{d}{dt} \|\theta\|_{H^{3/2}}^2 + \frac{\kappa}{2} \|\theta\|_{H^2}^2 \leq \frac{1}{\kappa} \|f\|_{H^1}^2 + \left( \frac{c_9}{\kappa} \|\theta\|_{H^{3/2}}^2 \right) \|\theta\|_{H^{3/2}}^2
\label{eq:H32:ODE:2}
\end{align}
for $t\geq 0$.

At this stage we apply the Uniform Gr\"onwall Lemma~\ref{lem:Gronwall}, with the functions
\begin{align*}
x= \|\theta\|_{H^{3/2}}^2, \quad a(t) = \frac{c_9}{\kappa} \|\theta\|_{H^{3/2}}^2, \quad b =  \frac{1}{\kappa} \|f\|_{H^1}^2
\end{align*} 
that by \eqref{eq:H32:time:average} obey the bounds
\begin{align*}
\int_{t}^{t+1} x(s) ds \leq  \frac{6+\kappa}{\kappa} M_{1,f}^2, \quad \int_{t}^{t+1} a(s) ds \leq \frac{c_9(6+\kappa)}{\kappa^2} M_{1,f}^2, \quad \int_t^{t+1} b(s)ds =  \frac{1}{\kappa} \|f\|_{H^1}^2
\end{align*}
for any $t \geq t_{H^1}$. We conclude from \eqref{eq:unif:Gronwall} that
\begin{align}
\|S(t)\theta_0\|_{H^{3/2}}^2 
\leq \left( \frac{6+\kappa}{\kappa} M_{1,f}^2 + \frac{1}{\kappa} \|f\|_{H^1}^2\right) \exp\left( \frac{c_9(6+\kappa)}{\kappa^2} M_{1,f}^2 \right) =: M_{3/2,f}^2
\label{eq:M:32:f:def}
\end{align}
for any  $t \geq t_{H^{3/2}}$, where  
\begin{align*}
t_{H^{3/2}} = t_{H^{3/2}}(\theta_0, f) =:   t_{H^1} + 1.
\end{align*}
We also note that  since both $\alpha^*$ and $M_{1,f}$ depend only on $\kappa$ and $\|f\|_{L^\infty \cap H^1}$ (and universal constants), we have $M_{3/2,f} = M_{3/2,f}(\kappa, \|f\|_{L^\infty \cap H^1})$. 

Lastly, we notice that $L^2_t H^2_x$ bounds are also available from the above argument. By combining \eqref{eq:H32:ODE:2} with \eqref{eq:M:32:f:def} we obtain
\begin{align}
\frac{1}{T} \int_{t}^{t+T} \|S(\tau) \theta_0\|_{H^2}^{2} d\tau \leq \frac{2}{\kappa^2} \|f\|_{H^1}^2 + \frac{2c_9}{\kappa^2} M_{3/2,f}^4 =: M_{2,f}^2
\label{eq:M:2:f:def}
\end{align}
for any $t\geq t_{H^{3/2}}$ and $T > 0$. This concludes the proof of the theorem.
\end{proof}

\begin{remark}[\bf Uniform attraction] 
\label{rem:uniform}
Theorem~\ref{thm:H:3/2:absorbing} guarantees that for $\theta_0 \in H^1$, there exists a time $t_{H^{3/2}}$ which depends on $\theta_0$ (and $f$) so that $S(t)\theta_0 \in \BB$ for $t \geq t_{H^{3/2}}$. Note however that $t_{H^{3/2}}(\theta_0,f)$ does not depend solely on $\|\theta_0\|_{H^1}$, and it is not a priori  locally uniform with respect to initial data. The sole reason for this is that the time of local existence of the solution arising from initial data $\theta_0 \in H^1$ is not guaranteed to depend only on $\|\theta_0\|_{H^1}$. On the other hand, we would like  to emphasize that  if $\theta_0 \in H^{1 + \delta}$ with $\delta >0$, it can be shown that$t_{H^{3/2}} = t_{H^{3/2}}(\|\theta_0\|_{H^{1 + \delta}},\|f\|_{L^\infty \cap H^1})$ and the time of entering the absorbing ball is a non-decreasing function of its arguments. {\em In particular, this implies that given a ball $\BB_R = \{ \theta \in H^{3/2} \colon \| \theta \|_{H^{3/2}} \leq R\}$, there exists a time $t_R = t_R (R,\|f\|_{L^\infty \cap H^1})$ such that $S(t) \BB_R \subset \BB$ for all $t \geq t_R$}. The reason for this fact is the following. For this smoother initial data $\theta_0 \in H^{1 + \delta}$, we find a local time of existence of a unique $L^\infty_t H^1_x \cap L^2_t H^{3/2}_x$ solution,  that depends only on $\| \theta_0 \|_{H^{1 + \delta}}$ and norms of $f$. Then going line-by-line through the proofs of Lemma~\ref{lem:C:alpha:absorbing} and Theorem~\ref{thm:H:3/2:absorbing} above shows that by waiting long enough, depending only on the $H^{1+\delta}$ norm of $\theta_0$ and on norms of $f$, the $C^{\alpha_*}$ norm and then the $H^{3/2}$ norm of $S(t)\theta_0$ are under control. To avoid redundancy we omit further details.
\end{remark}

As stated in the outline below Theorem~\ref{thm:attractor:existence}, besides having a compact absorbing set $\BB$ we need the injectivity of $S(t)$ on $\BB$ and continuity properties of $S(t)$ on $\BB$. The following lemma shows that the solution operator in {injective}.
\begin{proposition}[\bf Backwards uniqueness] \label{prop:backwards}
Let $\theta^{(i)}_0, \theta^{(2)}_0 \in H^1$ be two initial data, and let 
\[
\theta^{(i)}(t) = S(t) \theta^{(i)}_0 \in C([0,\infty);H^1) \cap L^2(0,\infty;H^{3/2})
\]
be the corresponding solutions of the initial value problem \eqref{eq:SQG:1}--\eqref{eq:SQG:3} for $i \in \{1,2\}$. If there exists $T>0$ such that $\theta^{(1)}(T) = \theta^{(2)}(T)$, then $\theta^{(1)}_0 = \theta^{(2)}_0$ holds.
\end{proposition}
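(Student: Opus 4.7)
The plan is to use the classical log-convexity (Agmon-Nirenberg) argument. Writing $w = \theta^{(1)} - \theta^{(2)}$ and $\tilde u = \RSZ^\perp w$, the difference satisfies the linear advection-diffusion equation
\begin{equation*}
\partial_t w + u^{(1)} \cdot \nabla w + \tilde u \cdot \nabla \theta^{(2)} + \kappa \Lambda w = 0.
\end{equation*}
By Theorem~\ref{thm:global:regularity}, for any $t_1 \in (0,T)$ both $\theta^{(i)} \in L^\infty([t_1,T];H^{3/2}) \cap L^2([t_1,T];H^2)$, and in particular $u^{(i)} \in L^\infty_t L^\infty_x$ and $\nabla \theta^{(i)} \in L^\infty_t L^4_x$ on $[t_1,T]$. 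Since $w \in C([0,T];H^1)$, it suffices to establish the implication $w(T)=0 \Rightarrow w(t_1)=0$ for every $t_1 \in (0,T)$, and then invoke continuity to pass $t_1 \to 0^+$.

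Fix $t_1 \in (0,T)$ and suppose for contradiction that $w(t_1)\neq 0$. Let $[t_1,t_*)\subseteq[t_1,T]$ be the maximal subinterval on which $\|w(t)\|_{L^2}>0$ and, on it, define
\begin{equation*}
y(t) = \|w(t)\|_{L^2}^2, \qquad \Lambda(t) = \frac{\|w(t)\|_{H^{1/2}}^2}{\|w(t)\|_{L^2}^2}.
\end{equation*}
Pairing the equation with $w$, exploiting $\nabla\cdot u^{(1)}=0$, and bounding the cross term via $|\int \tilde u\cdot\nabla \theta^{(2)}\,w\,dx| \leq C\|w\|_{H^{1/2}}\|\theta^{(2)}\|_{H^{3/2}}\|w\|_{L^2}$ (using the two-dimensional Sobolev embeddings $H^{1/2}\hookrightarrow L^4$ and $H^{3/2}\hookrightarrow L^\infty$), yields the lower bound
\begin{equation*}
\frac{d}{dt}\log y(t) \geq -C\kappa\,\Lambda(t) - \frac{C}{\kappa}\|\theta^{(2)}(t)\|_{H^{3/2}}^2,
\end{equation*}
whose second term is bounded on $[t_1,T]$. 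It therefore suffices to prove $\int_{t_1}^{t_*}\Lambda(s)\,ds < \infty$.

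To control $\Lambda(t)$, I would pair the equation with $\Lambda w$ and combine the result with the previous identity to arrive at
\begin{equation*}
\dot\Lambda(t)\,\|w\|_{L^2}^2 = -2\kappa\bigl(\|w\|_{H^1}^2 - \Lambda(t)^2\|w\|_{L^2}^2\bigr) - 2\langle Bw,\,\Lambda w - \Lambda(t) w\rangle,
\end{equation*}
with $Bw = u^{(1)}\cdot\nabla w + \tilde u\cdot\nabla \theta^{(2)}$; the Cauchy-Schwarz inequality $\|w\|_{H^{1/2}}^4 \leq \|w\|_{L^2}^2 \|w\|_{H^1}^2$ makes the principal dissipative term non-positive. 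Using Young's inequality to absorb the cross term into the dissipation, the commutator identity $\langle u^{(1)}\cdot\nabla w,\Lambda w\rangle = \langle [\Lambda^{1/2},u^{(1)}]\cdot\nabla w,\Lambda^{1/2} w\rangle$ (which follows from $\nabla\cdot u^{(1)}=0$) to move one half-derivative off of $w$, and the splitting $\|w\|_{H^1}^2/\|w\|_{L^2}^2 = \|\Lambda w-\Lambda(t)w\|_{L^2}^2/\|w\|_{L^2}^2 + \Lambda(t)^2$ so that the first piece is absorbed by the negative dissipative prefactor, one arrives at a differential inequality of the form
\begin{equation*}
\dot\Lambda(t) \leq K(t)\bigl(1+\Lambda(t)\bigr), \qquad K \in L^1([t_1,T]),
\end{equation*}
with the coefficient $K(t)$ depending on $\|\theta^{(i)}(t)\|_{H^{3/2}}^2$ and $\|\theta^{(i)}(t)\|_{H^2}^2$. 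Grönwall's inequality then keeps $\Lambda$ finite on $[t_1,t_*)$, so $\int_{t_1}^{t_*}\Lambda\,ds < \infty$. Integrating the log-$y$ bound gives $\log y(t) \geq \log y(t_1) - \mathrm{const} > -\infty$ for all $t \in [t_1,t_*)$, contradicting either the maximality of $t_*$ (if $t_*<T$, which forces $y(t_*)=0$) or the hypothesis $y(T)=0$. Hence $w(t_1)=0$, and sending $t_1\to 0^+$ concludes the proof.

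The main obstacle is the closure of the subordination bound $\dot\Lambda \leq K(1+\Lambda)$ with $K \in L^1$. A naive Hölder estimate gives $\|u^{(1)}\cdot\nabla w\|_{L^2}^2/\|w\|_{L^2}^2 \leq C\|\theta^{(1)}\|_{H^{3/2}}^2\,\|w\|_{H^1}^2/\|w\|_{L^2}^2$, which is \emph{not} a linear function of $\Lambda$ alone. The decomposition of $\|w\|_{H^1}^2/\|w\|_{L^2}^2$ indicated above absorbs the $\|\Lambda w-\Lambda(t)w\|^2/\|w\|^2$ factor into the dissipation only when $C\|\theta^{(1)}\|_{H^{3/2}}^2\leq \kappa^2$, so the delicate point is to pay the deficit by a coefficient involving $\|\theta^{(1)}\|_{H^2}^2$—which is only in $L^1_t$ by Theorem~\ref{thm:H:3/2:absorbing}—via the commutator identity; executing this estimate with enough care that the total coefficient $K$ is genuinely integrable rather than merely bounded-in-$L^\infty$ is the crux of the argument.
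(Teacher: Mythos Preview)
Your proposal follows the full Agmon--Nirenberg scheme: in addition to the logarithmic energy $\log\|w\|_{L^2}$ you track the Dirichlet quotient $\Lambda(t)=\|w\|_{H^{1/2}}^2/\|w\|_{L^2}^2$ and try to close a Gr\"onwall inequality for it. The paper's proof is much shorter and never introduces the Dirichlet quotient: it simply computes $\dot w$ for $w(t)=\log\bigl(2m/\|\theta(t)\|_{L^2}\bigr)$, claims the one–line bound $\dot w\le C\|\bar\theta\|_{H^{3/2}}^2$ after Cauchy--Schwarz and Young, and integrates. Note, however, that the dissipative contribution to $\dot w$ actually carries the sign $+\kappa\|\Lambda^{1/2}\theta\|_{L^2}^2/\|\theta\|_{L^2}^2$, not the sign written in the paper; with the correct sign that short argument does not close as stated, and one is driven exactly to the Dirichlet–quotient route you take. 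So your more elaborate approach is in fact the honest one.

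That said, your proof is not complete. You correctly isolate the crux---obtaining $\dot\Lambda\le K(t)(1+\Lambda)$ with $K\in L^1([t_1,T])$---but you do not carry it out; you only describe the ingredients (the commutator identity for the transport term and the splitting $\|w\|_{H^1}^2/\|w\|_{L^2}^2=\|\Lambda w-\Lambda(t)w\|_{L^2}^2/\|w\|_{L^2}^2+\Lambda(t)^2$). The dangerous piece is the transport contribution $\langle u^{(1)}\!\cdot\!\nabla w,\Lambda w\rangle=\tfrac12\langle w,[\Lambda,u^{(1)}]\!\cdot\!\nabla w\rangle$. If you estimate the commutator in $L^{4/3}$ via Lemma~\ref{lem:fractional:calculus} with $p_1=4,\,p_2=2$ and pair against $\|w\|_{L^4}\le C\|w\|_{H^{1/2}}$, you obtain a bound of order $\|\theta^{(1)}\|_{H^{3/2}}\,\Lambda^{1/2}\,\|w\|_{H^1}/\|w\|_{L^2}$; after the splitting and a Young inequality this still leaves a superlinear term $C\|\theta^{(1)}\|_{H^{3/2}}\Lambda^{3/2}$, which does not feed into a linear Gr\"onwall. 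To close, you must either use the extra regularity $\theta^{(i)}\in L^2([t_1,T];H^2)$ from Theorem~\ref{thm:global:regularity} to upgrade the commutator bound (so that the coefficient sees $\|\theta^{(i)}\|_{H^2}^2$ rather than $\|\theta^{(i)}\|_{H^{3/2}}$, at the price of only $L^1_t$ integrability), or run the log-convexity with a different pair of norms. Until that step is written out, the argument has a genuine gap at precisely the point you flag as ``the crux.''
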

The proof uses the classical log-convexity method of Agmon and Nirenberg~\cite{AgmonNirenberg67}, and is given in Appendix~\ref{app:attractor:existence} below.

The usual attractor theory requires that the solution map $S$ is continuous with respect to initial data for fixed time, and continuous with respect to time for fixed initial data, in the topology of $H^1$. The next lemma in particular proves the Lipschitz continuity of $S(t) \colon \BB \to H^1$ for fixed $t>0$. The proof is given in Appendix~\ref{app:attractor:existence} below. 

\begin{proposition}[\bf Continuity]\label{prop:continuity}
For {fixed} $\theta_0 \in H^1$ we have that $S(\cdot)\theta_0 \colon [0,\infty) \to H^1$ is continuous. Fix a ball $\BB_0 \in H^{3/2}$. We have that for $\theta_0, \tilde \theta_0 \in \BB_0$ with $\|\theta_0 - \tilde \theta_0\|_{H^1} \leq \eps \kappa$, for some universal $0 < \eps \ll 1$, we have 
\[
\|S(t) \theta_0 - S(t) \tilde \theta_0\|_{H^1} \leq e(t) \|\theta_0 - \tilde \theta_0\|_{H^1}
\]
for some non-decreasing continuous function of time $e(t)$. In particular, if $\{ t_n \}_{n\geq 1}$ is a sequence of times that diverge to $\infty$ as $n\to \infty$, and $\{ \theta_{0,n} \}_{n \geq 1} \subset \BB_0$ are a sequence of initial data such that 
\begin{align*}
\| S(t_n) \theta_{0,n} - \theta_0 \|_{H^1} \to 0
\end{align*}
as $n \to \infty$ for some $\theta_0 \in H^1$, then for any fixed $t>0$ we have
\begin{align*}
\| S(t+t_n) \theta_{0,n} - S(t) \theta_0\|_{H^1} = \| S(t) S(t_n) \theta_{0,n} - S(t) \theta_0\|_{H^1} \to 0
\end{align*}
as $n\to \infty$. 
\end{proposition}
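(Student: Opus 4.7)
The plan is to prove the three assertions in turn; the technical heart is the Lipschitz estimate on the absorbing ball, from which the limit consequence then follows by composition. Continuity in time of $t\mapsto S(t)\theta_0$ in $H^1$ is already built into the existence theory: Proposition~\ref{prop:local} gives $\theta\in C([0,T_*);H^1)$ locally and Theorem~\ref{thm:global:regularity} extends this to $C([0,\infty);H^1)$, so no separate argument is required for this part.

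For the Lipschitz bound I would work directly with the equation for the difference $w=\theta-\tilde\theta$ and $v=\RSZ^\perp w$, namely
\begin{align*}
\partial_t w + u\cdot\nabla w + v\cdot\nabla\tilde\theta + \kappa\Lambda w=0, \qquad w(0)=\theta_0-\tilde\theta_0,
\end{align*}
and perform an $H^1$ energy estimate by pairing with $\Lambda^2 w$. The divergence-free structure of $u$ and $v$ kills $\int u\cdot\nabla|\nabla w|^2\,dx$, and the three remaining contributions, schematically $\int(\nabla u)(\nabla w)^2$, $\int(\nabla v)(\nabla\tilde\theta)(\nabla w)$ and $\int v\,(\nabla^2\tilde\theta)(\nabla w)$, are estimated by H\"older inequality combined with the 2D Sobolev embedding $H^{1/2}\hookrightarrow L^4$, so that $\|\nabla u\|_{L^4},\|\nabla v\|_{L^4},\|\nabla\tilde\theta\|_{L^4},\|\nabla w\|_{L^4}$ are controlled by the respective $H^{3/2}$ norms and $\|v\|_{L^4}$ by $\|w\|_{H^1}$. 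An $\varepsilon$-Young absorption of every $\|w\|_{H^{3/2}}$ factor into the dissipative term on the left yields
\begin{align*}
\tfrac{d}{dt}\|w\|_{H^1}^2 + \tfrac{\kappa}{2}\|w\|_{H^{3/2}}^2 \leq \frac{C}{\kappa}\bigl(\|\theta\|_{H^{3/2}}^2 + \|\tilde\theta\|_{H^{3/2}}^2 + \|\tilde\theta\|_{H^2}^2\bigr)\|w\|_{H^1}^2.
\end{align*}
Gr\"onwall then produces the claimed bound with
\begin{align*}
e(t)=\exp\!\Bigl(\tfrac{C}{\kappa}\int_0^t \bigl(\|\theta(s)\|_{H^{3/2}}^2+\|\tilde\theta(s)\|_{H^{3/2}}^2+\|\tilde\theta(s)\|_{H^2}^2\bigr)ds\Bigr),
\end{align*}
which is finite, continuous and non-decreasing because both initial data lie in the $H^{3/2}$-ball $\BB_0$, so by the smoothing of Proposition~\ref{prop:local} combined with Theorem~\ref{thm:global:regularity} the two trajectories belong to $L^\infty_{loc}([0,\infty);H^{3/2})\cap L^2_{loc}([0,\infty);H^2)$. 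The hypothesis $\|\theta_0-\tilde\theta_0\|_{H^1}\leq\varepsilon\kappa$ does not enter the Gr\"onwall derivation itself; it is a convenient safety margin for the application in the limit consequence below.

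For that final assertion, Remark~\ref{rem:uniform} furnishes a uniform absorbing time $t_{\BB_0}$ such that $S(\tau)\theta_{0,n}\in\BB$ for every $n$ and every $\tau\geq t_{\BB_0}$. Since $t_n\to\infty$ we eventually have $S(t_n)\theta_{0,n}\in\BB$, and because $\BB$ is closed in $H^1$ the $H^1$ limit $\theta_0$ lies in $\BB$ as well. For $n$ large enough the smallness $\|S(t_n)\theta_{0,n}-\theta_0\|_{H^1}\leq\varepsilon\kappa$ holds, so applying the Lipschitz bound to the pair $(S(t_n)\theta_{0,n},\theta_0)\subset\BB$ on the interval $[0,t]$ gives
\begin{align*}
\|S(t+t_n)\theta_{0,n}-S(t)\theta_0\|_{H^1}=\|S(t)S(t_n)\theta_{0,n}-S(t)\theta_0\|_{H^1}\leq e(t)\|S(t_n)\theta_{0,n}-\theta_0\|_{H^1}\to 0.
\end{align*}

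The anticipated main obstacle is the integral $\int v\cdot\nabla^2\tilde\theta\cdot\nabla w\,dx$ in the energy estimate, which forces reliance on the $L^2_{loc}(H^2)$ control of $\tilde\theta$ rather than the pointwise-in-time $H^{3/2}$ bound and produces the $\|\tilde\theta\|_{H^2}^2$ factor appearing in $e(t)$. A Kato--Ponce treatment via the commutator inequality \eqref{eq:Hs:commutator} provides an equivalent alternative with essentially the same output, so either route suffices.
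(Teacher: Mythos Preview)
Your proof is correct, and the overall architecture (time continuity from the existence theory; an $H^1$ energy estimate on the difference equation followed by Gr\"onwall; then the semigroup property for the limit statement) matches the paper's. However, the central energy estimate is handled quite differently.

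The paper treats the dangerous term $\int v\cdot\nabla\tilde\theta\,(-\Delta w)\,dx$ without invoking $\|\tilde\theta\|_{H^2}$: it bounds $\|\Lambda^{1/2}(v\cdot\nabla\tilde\theta)\|_{L^2}$ by $\|v\|_{L^\infty}\|\tilde\theta\|_{H^{3/2}}$ and then controls $\|v\|_{L^\infty}=\|\RSZ^\perp w\|_{L^\infty}$ via the Brezis--Gallou\"et inequality. This keeps the right-hand side expressed purely through the $H^{3/2}$ radius $M$ of the absorbing ball, but the price is a residual term $C_0\|w\|_{H^1}\|w\|_{H^{3/2}}^2$ that cannot be absorbed into the dissipation unless $\|w\|_{H^1}$ is small. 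The hypothesis $\|\theta_0-\tilde\theta_0\|_{H^1}\leq\varepsilon\kappa$ is therefore essential in the paper's argument, and the proof proceeds by a continuation/iteration over time intervals of fixed length $T(\kappa,M)$ on which $\|w(t)\|_{H^1}$ at most doubles.

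You instead accept the appearance of $\|\tilde\theta\|_{H^2}$ (estimating $\int v\,\nabla^2\tilde\theta\,\nabla w$ by $\|v\|_{L^4}\|\tilde\theta\|_{H^2}\|\nabla w\|_{L^4}$), which yields a genuinely \emph{linear} Gr\"onwall inequality in $\|w\|_{H^1}^2$ with coefficient $\int_0^t\|\tilde\theta(s)\|_{H^2}^2\,ds$. This is finite since $\tilde\theta_0\in H^{3/2}$ gives $\tilde\theta\in L^2_{loc}([0,\infty);H^2)$ immediately from the $H^{3/2}$ energy estimate \eqref{eq:H32:ODE:2}, and the bound can be made uniform over $\BB_0$ via Remark~\ref{rem:uniform} and the $C^\alpha$--to--$H^{3/2}$ bootstrap. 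Your route is shorter and, as you correctly observe, makes the smallness assumption superfluous for the Lipschitz bound itself; the paper's route avoids $H^2$ control at the cost of the continuation machinery. Both are valid.
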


We conclude this section with the proof of the existence of the global attractor on the phase space $H^1$. 

\begin{proof}[Proof of Theorem~\ref{thm:attractor:existence}]
The proof follows using the same argument given in \cite[pp.~133--136]{ConstantinFoias88}.
By Theorem~\ref{thm:H:3/2:absorbing} we have a compact absorbing set $\BB = \{ \theta \in H^{3/2} \colon \|\theta\|_{H^{3/2}} \leq M_{3/2,f} \}$, where $M_{3/2,f}$ can be computed in terms of  $\kappa$, $\|f\|_{L^\infty\cap H^1}$, and universal constants. The idea is that for any bounded $\BB_1 \subset H^{1+\delta}$ with $\delta>0$ by Remark~\ref{rem:uniform} we have that the omega limit set obeys $\omega (\BB_1) \subset \BB$ and thus, by Proposition~\ref{prop:continuity} we have 
\[
S(t)(\omega(\BB_1)) = \omega (\BB_1)
\] 
for all $t\geq 0$,
where the omega limit sets are in the $H^1$ topology. The solution map is continuous with respect to initial data in $\BB$, and the global attractor is just 
\[
\AA = \bigcap_{t>0}S(t) \BB.
\]
That $\lim_{t\to \infty} {\rm dist}(S(t) \theta_0, \AA) = 0$ for any $\theta_0 \in H^1$ follows since $S(t) \omega(\theta_0) = \omega(\theta_0) \subset \BB$ and the definition of $\AA$.
The invariance of $\AA$ for all time follows from the backward uniqueness on $\BB$ established in Proposition~\ref{prop:backwards}. The bounds \eqref{eq:M:32:A}--\eqref{eq:M:2:A} follow by taking $M_\AA = \max\{ M_{3/2,f} , M_{2,f}\}$, where $M_{3/2,f}$ and $M_{2,f}$ are as defined by \eqref{eq:M:32:f:def} and \eqref{eq:M:2:f:def} above.
\end{proof}

\begin{remark}[\bf Higher regularity]
If $f \in C^\infty(\TT^2)$, it can be shown that in fact $\AA \subset C^\infty(\TT^2)$ with bounds that depend only on $\kappa$ and $f$. Similar statements hold in the real analytic or Sobolev categories.
\end{remark}

\section{Finite dimensionality of the attractor}

In this section we establish a bound on the fractal (and a forteriori  Hausdorff) dimension of the global attractor $\AA$ for $S(t)$ evolving on $H^1$. The physical meaning of this bound is that the long-time behavior of solutions to the forced critical SQG equations can be fully described by a finite number of independent degrees of freedom. 

We recall that the fractal dimension $d_f(Z)$ of a compact set $Z$ is given by 
\begin{align*}
d_f(Z) = \limsup_{r\to 0} \frac{\log n_{Z}(r)}{\log(1/r)}
\end{align*}
where $n_Z(r)$ is the minimal number of balls in $H^1$ of radii $r$ needed to cover $Z$. Note that fractal dimension gives an upper bound (which may be strict) for the Hausdorff dimension of a compact set $Z$. 

The proof closely follows the outline given in~\cite{ConstantinFoias85, ConstantinFoias88}, where the connection with global Lyapunov exponents and the Kaplan-Yorke formula is established. The main idea is as follows. Assume $\AA$ is covered by a finite number of balls of radius $r$. Let the flow $S(t)$ transport a ball of radius $r$ centered at $\theta_0$. Then up to an $o(r)$ error the image of the ball is an ellipsoid centered at $S(t) \theta_0$, with semi-axes on the directions given by the eigenvalues of $M(t,\theta_0)$, of lengths given by $r$ multiplied by the eigenvalues of $M(t,\theta_0)$, where $M(t,\theta_0) = \left( S'(t,\theta_0)^\ast S'(t,\theta_0) \right)^{1/2}$, and $S'(t,\theta_0)$ is the Fr\'echet derivative of $S(t) \theta_0$. A control on the volume of this ellipsoid (given in terms of the product of the eigenvalues of $M(t,\theta_0)$) then gives a bound on the number of balls of radius $r$ needed to (re-)cover the ellipsoid. It then turns out that in order to estimate the fractal dimension of $\AA$, it is sufficient to find an integer $N$ with the property that $n$-dimensional volume elements carried by the flow decay exponentially, for all $n\geq N+1$. We now make these ideas more precise.

\begin{definition}[\bf Continuous differentiability of $S(t)$]
\label{def:conts:diff}
The solution map $S(t)$ is continuously differentiable on $\AA$ if for every $\theta_0 \in \AA$ there exists a linear operator
\begin{align*}
S'(t,\theta_0) \colon H^1 \to H^1
\end{align*}
and a positive function $e(r,t)$, which is continuos with respect to both variables, such that 
\begin{align}
\sup_{\theta_0,\phi_0 \in \AA, 0 < \|\theta_0 - \phi_0\|_{H^1} \leq r} \frac{\| S(t) \phi_0 - S(t)\theta_0 - S'(t,\theta_0)[\phi_0 - \theta_0] \|_{H^1}^2}{\|\phi_0 - \theta_0\|_{H^1}^2} \leq e(r,t)
\label{eq:Frechet:differentiability}
\end{align}
with
\begin{align}
\lim_{r\to 0^+} e(r,t) = 0
\end{align}
and moreover
\begin{align}
\sup_{\theta_0 \in \AA, \|\xi_0\|_{H^1}=1} \|S'(t,\theta_0) [\xi_0]\|_{H^1} < \infty
\label{eq:Frechet:boundedness}
\end{align}
for every $t\geq 0$. 
\end{definition}

The solution map $S(t)$ induced by the critical SQG equation is indeed continuously differentiable on $\AA$. 
For $\theta_0\in\AA$, write $\theta = \theta(t) = S(t) \theta_0$ and for $\xi \in H^1$ let us introduce the elliptic operator
\begin{align} 
A_{\theta_0}(t)[ \xi] = A_\theta [\xi] = - \kappa \Lambda \xi - \RSZ^\perp \theta \cdot \nabla \xi - \RSZ^\perp \xi \cdot \nabla \theta.
\label{eq:A:xi:def}
\end{align}
We express $S'(t,\theta_0)$ using  $A_{\theta_0}(t)$.
\begin{proposition}[\bf Linearization about a trajectory on the attractor] \label{prop:unif:diff}
The solution map $S(t)$ associated to \eqref{eq:SQG:1}--\eqref{eq:SQG:2} is continuously differentiable on $\AA$. Moreover, the linear operator $S'(t,\theta_0)$, when acting on an element $\xi_0 \in H^1$ is given by
\begin{align*}
S'(t,\theta_0)[\xi_0]  = \xi(t)
\end{align*}
where $\xi(t)$ is the solution of 
\begin{align}
\partial_t \xi = A_{\theta_0}(t) [\xi] := - \kappa \Lambda \xi - \RSZ^\perp \theta \cdot \nabla \xi - \RSZ^\perp \xi \cdot \nabla \theta, \quad \xi(0) = \xi_0.
\label{eq:L:xi:def}
\end{align}
Also, for any $t>0$ the operator $S'(t,\theta_0)$ is compact.
\end{proposition}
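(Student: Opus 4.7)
The plan is to split the proof into three steps: first establish global well-posedness of the linear initial value problem \eqref{eq:L:xi:def} in $H^1$, then verify that this solution operator is the Fr\'echet derivative of $S(t)$ at $\theta_0$ in the sense of Definition~\ref{def:conts:diff}, and finally upgrade regularity of $\xi$ in order to deduce compactness of $S'(t,\theta_0)$ for $t>0$.

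For the first step, fix $\theta_0 \in \AA$ and set $\theta(t) = S(t)\theta_0$. By Theorem~\ref{thm:attractor:existence} we have $\theta \in L^\infty(\RR; H^{3/2}) \cap L^2_{loc}(\RR; H^2)$, with bounds depending only on $\kappa$ and $\|f\|_{L^\infty \cap H^1}$. Equation \eqref{eq:L:xi:def} is linear, with divergence-free transport coefficient $u = \RSZ^\perp \theta$ and a reaction term $\RSZ^\perp \xi \cdot \nabla \theta$. Testing against $\xi$ and $\Lambda^2 \xi$, the transport term vanishes by $\nabla \cdot u = 0$, and the reaction term is controlled via the 2D Sobolev embedding $H^{1/2} \hookrightarrow L^4$ and H\"older,
\begin{equation*}
\left|\int_{\TT^2} \RSZ^\perp \xi \cdot \nabla \theta \, \xi \, dx\right|
\leq C\|\xi\|_{L^4}\|\nabla\theta\|_{L^4}\|\xi\|_{L^2}
\leq C\|\xi\|_{H^{1/2}}\|\theta\|_{H^{3/2}}\|\xi\|_{L^2},
\end{equation*}
and an analogous commutator estimate (using the same tools as in \eqref{eq:H32:ODE:1}) at the $H^1$ level. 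A Gr\"onwall argument yields a unique $\xi \in C([0,T];H^1) \cap L^2(0,T;H^{3/2})$ with $\|\xi(t)\|_{H^1} \leq \|\xi_0\|_{H^1}\,e^{C(\AA,f)t}$, proving \eqref{eq:Frechet:boundedness}.

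For the second step, let $\theta_0,\phi_0 \in \AA$ and set $\theta = S(\cdot)\theta_0$, $\phi = S(\cdot)\phi_0$, $\zeta = \phi - \theta$, and $\xi = S'(t,\theta_0)[\phi_0 - \theta_0]$. Subtracting the equations and rearranging, the remainder $\eta = \phi - \theta - \xi$ satisfies
\begin{equation*}
\partial_t \eta + \RSZ^\perp \theta \cdot \nabla \eta + \RSZ^\perp \eta \cdot \nabla \theta + \kappa \Lambda \eta = -\RSZ^\perp \zeta \cdot \nabla \zeta, \qquad \eta(0) = 0.
\end{equation*}
Proposition~\ref{prop:continuity} gives $\|\zeta(t)\|_{H^1} \leq e(t)\,r$ with $r = \|\phi_0-\theta_0\|_{H^1}$, while Theorem~\ref{thm:attractor:existence} gives a uniform bound $\|\zeta(t)\|_{H^{3/2}} \leq 2 M_\AA$. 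Interpolating, $\|\zeta(t)\|_{H^{1+\beta}} \leq C (e(t) r)^{1-2\beta} M_\AA^{2\beta}$ for $\beta \in [0,1/2]$. Running the $H^1$ energy estimate of step one for $\eta$ against the source $\RSZ^\perp\zeta \cdot \nabla \zeta$ written in divergence form $\nabla \cdot (\zeta \RSZ^\perp \zeta)$, pairing with $\Lambda^2 \eta$, and using H\"older and Sobolev with a small extra $\beta > 0$ of regularity on $\zeta$, one obtains $\|\eta(t)\|_{H^1}^2 \leq C(t) r^{2+\gamma}$ for some $\gamma > 0$; this yields \eqref{eq:Frechet:differentiability} with $e(r,t) \leq C(t) r^\gamma \to 0$ as $r \to 0^+$.

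For the third step, I claim that for $t > 0$, $\xi(t)$ lies in $H^{1+\delta}$ for some small $\delta > 0$ with a bound linear in $\|\xi_0\|_{H^1}$. Indeed, multiplying \eqref{eq:L:xi:def} against $t^{2\delta} \Lambda^{2(1+\delta)}\xi$ and using the $L^\infty_t H^{3/2}$ bound on $\theta$ together with the Sobolev-based estimates of step one (now at level $1+\delta$, $\delta$ small enough that the stretching term is still absorbed by the fractional dissipation), gives $\sup_{t \in (0,T]} t^\delta\|\xi(t)\|_{H^{1+\delta}} \leq C(T,\AA,f)\|\xi_0\|_{H^1}$. Since the embedding $H^{1+\delta} \hookrightarrow H^1$ is compact on $\TT^2$ by Rellich-Kondrachov, $S'(t,\theta_0)$ maps bounded sets of $H^1$ into relatively compact sets of $H^1$, hence is compact for each $t > 0$.

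The main obstacle is closing the $H^1$ energy estimate for $\eta$ in step two: in two dimensions, the critical SQG linearization places the quadratic source $\RSZ^\perp \zeta \cdot \nabla \zeta$ just at the borderline of what the $\Lambda$-dissipation can absorb on $H^1$, and one must genuinely exploit the supercritical $H^{3/2}$ regularity of trajectories on the attractor to gain the small amount of room needed to close the estimate with a super-quadratic bound on $\eta$ in terms of $r$.
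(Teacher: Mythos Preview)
Your overall three-step architecture matches the paper's, and Steps~1 and~3 are essentially fine (the paper proves compactness by a mean-value-theorem argument on $\int_0^{t/2}\|\xi\|_{H^{3/2}}^2\,ds$ followed by $H^{3/2}$ propagation, rather than your time-weighted smoothing estimate, but either route works).

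The genuine gap is in Step~2. From the pointwise bounds you invoke, namely $\|\zeta(t)\|_{H^1}\le e(t)\,r$ and $\|\zeta(t)\|_{H^{3/2}}\le 2M_\AA$, interpolation gives $\|\zeta(t)\|_{H^{1+\beta}}\le C\,r^{1-2\beta}$; but feeding only these into the source estimate cannot produce a super-quadratic bound on $\eta$. After pairing $\RSZ^\perp\zeta\cdot\nabla\zeta$ with $\Lambda^2\eta$ and using Young's inequality to absorb $\|\eta\|_{H^{3/2}}^2$, the remaining source is controlled by (for instance) $C\|\zeta\|_{H^{1+\beta}}^2\|\zeta\|_{H^{3/2}}^2$; the $H^{3/2}$ factor is unavoidable since the product $\zeta\,\RSZ^\perp\zeta$ must be estimated in $H^{3/2}$. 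With your pointwise bounds this is at best $C\,r^{2-4\beta}$, and after Gr\"onwall you obtain $\|\eta(t)\|_{H^1}^2\le C(t)\,r^{2-4\beta}$, which is \emph{sub}-quadratic in $r$ and fails to yield \eqref{eq:Frechet:differentiability}.

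What the paper exploits, and what you are missing, is the dissipative bound
\[
\int_0^t \|\zeta(s)\|_{H^{3/2}}^2\,ds \le C\,r^2\,K(t,M_\AA),
\]
which follows directly from the $H^1$ energy identity for $\zeta=\phi-\theta$ (the $\kappa\|\zeta\|_{H^{3/2}}^2$ dissipation term integrated in time, together with the $H^1$ bound on $\zeta$). Proposition~\ref{prop:continuity} as stated only records the pointwise $H^1$ bound, not this $L^2_tH^{3/2}_x$ smallness. Once you have it, the source integrates to $C\,r^{2-4\beta}\int_0^t\|\zeta\|_{H^{3/2}}^2\,ds\le C\,r^{4-4\beta}K(t)$, which is super-quadratic. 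The paper reaches the same conclusion via Brezis--Gallou\"et on $\|\RSZ^\perp\zeta\|_{L^\infty}$ (in place of your interpolation) combined with this time-integrated bound, obtaining $\|\eta(t)\|_{H^1}^2\le \|\xi_0\|_{H^1}^{4-a}K(t,M_\AA)$.
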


It follows from the proof that the function $e(r,t)$ in Definition~\ref{def:conts:diff} may be taken $\approx r^{2-a} \exp(C t)$ for some $a \in (0,1)$ and some $C>0$, which depends only on $\kappa$ and $\|f\|_{L^\infty\cap H^1}$. The proof of Proposition~\ref{prop:unif:diff} is quite technical, and we defer it to Appendix~\ref{app:attractor:existence}.

We next show that there is an $N$ such that volume elements which are carried by the flow of $S(t) \theta_0$, with $\theta_0 \in \AA$, decay exponentially for dimensions larger than $N$. Consider $\theta_0 \in \AA$, and an initial orthogonal set of infinitesimal displacements $\{\xi_{1,0},\ldots, \xi_{n,0}\}$ for some $n \geq 1$. The volume of the parallelepiped they span is given by 
\begin{align*}
V_n(0) = \| \xi_{1,0} \wedge \ldots \wedge \xi_{n,0} \|_{H^1}.
\end{align*}
The reason we have introduced in Proposition~\ref{prop:unif:diff} the linearization $S'(t,\theta_0)$ of the flow near $S(t) \theta_0$ is that these displacements $\xi_{i}$ evolve exactly under this linearization, that is, we define 
\begin{align*}
 \xi_i(t) = S'(t,\theta_0) [\xi_{i,0}] \quad \mbox{for all } i \in \{1,\ldots,n\}, \mbox{ and } t\geq 0,
\end{align*}
or equivalently the $\xi_i$ obey the equation
\begin{align*}
\partial_t \xi_i = A_{\theta_0}(t) [\xi_i], \quad \xi_i(0) = \xi_{i,0},
\end{align*}
where $A_{\theta_0}(t)$ is defined in \eqref{eq:A:xi:def} above. Then it follows cf.~\cite{ConstantinFoias85, ConstantinFoias88} that the volume elements
\begin{align*}
V_n(t) = \| \xi_{1}(t) \wedge \ldots \wedge \xi_{n}(t) \|_{H^1}
\end{align*}
satisfy 
\begin{align*}
V_n(t) = V_n(0) \exp\left( \int_0^t {\rm Tr} (P_n(s) A_{\theta_0}(s) ) ds \right)
\end{align*} 
where the orthogonal projection $P_n(s)$ is onto the linear span of $\{ \xi_1(s), \ldots , \xi_n(s)\}$ in the Hilbert space $H^1$, and ${\rm Tr}(P_n(s) A_\theta)$ is defined by
\begin{align} 
{\rm Tr} (P_n(s) A_\theta) = \sum_{j=1}^n \int_{\TT^2} (-\Delta \phi_j(s)) A_\theta[\phi_j(s)] dx
\label{eq:trace:Pn}
\end{align}
for $n\geq 1$, with $\{ \phi_{1}(s), \ldots , \phi_n(s)\}$ an orthornormal set spanning the linear span of  $\{ \xi_1(s), \ldots , \xi_n(s)\}$. The value of ${\rm Tr} (P_n(s) A_\theta) $ does not depend on the choice of this orthonormalization.
Therefore, letting
\begin{align}
\la P_n A_{\theta_0} \ra := \limsup_{T\to \infty} \frac{1}{T} \int_0^T {\rm Tr}(P_n(t) A_{\theta_0}(t) ) dt
\label{eq:limsup:trace:Pn}
\end{align}
we obtain
\begin{align}
V_n(t) \leq V_n(0) \exp\left( t \sup_{\theta_0 \in \AA} \sup_{P_n(0)} \la P_n A_{\theta_0} \ra \right) 
\label{eq:volume:decay}
\end{align}
for all $t \geq 0$, where the supremum over $P_n(0)$ is a supremum over all choices of initial $n$ orthogonal set of infinitesimal displacements that we take around $\theta_0$.

Next, we show that $n$-dimensional volume elements decay exponentially in time (at a rate that is bounded from below), whenever $n$ is sufficiently large, {\em independently} of the choice of $\theta_0$ in $\AA$, and {\em independently} of initial set of orthogonal displacements $\{\xi_{i,0}\}_{i=1}^n$ which define $P_n(0)$. The key is to to show that the symmetric part of the operator $A_{\theta_0}(t)$ obeys good quadratic form bounds in the $H^1$ topology.

\begin{proposition}[\bf Contractivity of large dimensional volume elements] 
\label{prop:n:contraction}
There exists $N = N( \kappa,M_{\AA} )$ such that for any $\theta_0 \in \AA$ and any set of initial orthogonal displacements $\{\xi_{i,0}\}_{i=1}^n$, we have 
\begin{align} 
\la P_n A_{\theta_0} \ra  < 0
\label{eq:neg:Lyapunov}
\end{align}
whenever $n \geq N$. In particular, $V_n(t)$ decays exponentially in $t$ for any $n\geq N$.
\end{proposition}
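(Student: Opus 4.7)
The plan is to establish the pointwise-in-$t$ trace bound
\[
\mathrm{Tr}(P_n(t) A_{\theta_0}(t)) \leq -\frac{\kappa c_0}{2}\, n^{3/2} + \frac{C}{\kappa}\bigl(\|\theta(t)\|_{H^{3/2}}^2 + \|\theta(t)\|_{H^2}^2\bigr)\, n,
\]
uniformly over all $H^1$-orthonormal frames $\{\phi_j(t)\}_{j=1}^n$ of the range of $P_n(t)$, where $\theta(t) = S(t)\theta_0$. Taking $\limsup$-time averages and invoking \eqref{eq:M:32:A}--\eqref{eq:M:2:A} from Theorem~\ref{thm:attractor:existence} turns this into
\[
\la P_n A_{\theta_0}\ra \leq -\frac{\kappa c_0}{2}\, n^{3/2} + \frac{C\, M_\AA^2}{\kappa}\, n
\]
uniformly in $\theta_0\in\AA$ and the initial choice of $P_n(0)$. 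The right side is strictly negative for $n\geq N := \lceil c_* M_\AA^4/\kappa^4 \rceil$ with a suitable universal $c_*$, and \eqref{eq:volume:decay} then promotes this to exponential decay of $V_n(t)$.

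Using $\la A_\theta\phi, \phi\ra_{H^1} = \int (-\Delta\phi) A_\theta[\phi]\,dx$, integrating by parts in the $H^1$ pairing, and exploiting $\nabla\cdot \RSZ^\perp\theta = 0$ to cancel $\int (\RSZ^\perp\theta)\cdot\nabla(|\nabla\phi_j|^2/2)\,dx = 0$, the trace \eqref{eq:trace:Pn} decomposes as
\[
\mathrm{Tr}(P_n A_\theta) = -\kappa \sum_{j=1}^n \|\phi_j\|_{H^{3/2}}^2 - \sum_{j=1}^n \bigl(\mathcal{T}_1(\phi_j) + \mathcal{T}_2(\phi_j) + \mathcal{T}_3(\phi_j)\bigr),
\]
with
\begin{align*}
\mathcal{T}_1(\phi_j) &= \int_{\TT^2} \partial_k(\RSZ^\perp\theta)_i\, \partial_i\phi_j\, \partial_k\phi_j\, dx, \\
\mathcal{T}_2(\phi_j) &= \int_{\TT^2} \partial_k(\RSZ^\perp\phi_j)_i\, \partial_i\theta\, \partial_k\phi_j\, dx, \\
\mathcal{T}_3(\phi_j) &= \int_{\TT^2} (\RSZ^\perp\phi_j)_i\, \partial_k\partial_i\theta\, \partial_k\phi_j\, dx.
\end{align*}
For the dissipation lower bound, the substitution $\psi_j = \Lambda\phi_j$ produces an $L^2$-orthonormal family on mean-zero $L^2(\TT^2)$, and Ky Fan's inequality combined with Weyl counting for the eigenvalues $\{|k|:k\in\ZZ^2_*\}$ of $\Lambda$ gives
\[
\sum_{j=1}^n \|\phi_j\|_{H^{3/2}}^2 = \sum_{j=1}^n \la \Lambda\psi_j,\psi_j\ra \geq \sum_{j=1}^n \lambda_j \geq c_0\, n^{3/2}
\]
for a universal $c_0>0$.

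The drift terms are controlled via 2D Sobolev embedding $H^{3/2}(\TT^2)\hookrightarrow W^{1,4}(\TT^2)$, the $L^4$-boundedness of Riesz transforms, and $L^2\cdot L^4\cdot L^4$ Hölder:
\[
|\mathcal{T}_1(\phi_j)|,\ |\mathcal{T}_2(\phi_j)| \leq C\|\theta\|_{H^{3/2}} \|\phi_j\|_{H^1} \|\phi_j\|_{H^{3/2}}, \qquad |\mathcal{T}_3(\phi_j)| \leq C \|\theta\|_{H^2} \|\phi_j\|_{H^1} \|\phi_j\|_{H^{3/2}},
\]
with $\|\phi_j\|_{H^1}=1$. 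Summing, applying Cauchy--Schwarz (using $\sum\|\phi_j\|_{H^1}^2=n$), and then Young's inequality to absorb $\tfrac{\kappa}{2}\sum\|\phi_j\|_{H^{3/2}}^2$ into the dissipation produces the pointwise bound stated at the outset.

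The main obstacle is navigating the borderline 2D Sobolev endpoint: $\theta\in H^{3/2}(\TT^2)$ does \emph{not} yield $\RSZ^\perp\theta \in L^\infty$, so any route passing through $\|u\|_{L^\infty}$ must be avoided. The $L^2\cdot L^4\cdot L^4$ Hölder chain sidesteps this cleanly, invoking only the $H^{3/2}$ and $H^2$ norms of $\theta$ — precisely the quantities that Theorem~\ref{thm:attractor:existence} controls on $\AA$ (pointwise for $H^{3/2}$, in time average for $H^2$). A secondary technical point is justifying $H^1$-orthogonalization, differentiation of the resulting $(\bar x, \bar h)$ style selection, and the Ky Fan min-max in infinite dimensions, but these are standard and follow the template of~\cite{ConstantinFoias85,ConstantinFoias88}.
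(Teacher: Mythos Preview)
Your proof is correct and follows essentially the same approach as the paper. The paper's argument is slightly more compressed --- it combines your $\mathcal{T}_2$ and $\mathcal{T}_3$ into a single term $\left|\int \partial_k(\RSZ^\perp\xi\cdot\nabla\theta)\,\partial_k\xi\,dx\right|$, and uses Poincar\'e to absorb the $\|\theta\|_{H^{3/2}}^2$ contribution into $\|\theta\|_{H^2}^2$ so that only the time-averaged $H^2$ bound \eqref{eq:M:2:A} is invoked --- but the Sobolev exponents, the $L^2\cdot L^4\cdot L^4$ H\"older chain, and the Ky Fan/Weyl lower bound for the dissipation are identical to yours.
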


\begin{proof}[Proof of Proposition~\ref{prop:n:contraction}]

Let $\xi \in H^1$ be arbitrary. By the definition of $A_\theta[\xi]$ in \eqref{eq:A:xi:def}, and the fact that $\RSZ^\perp \theta$ is divergence-free, we have 
\begin{align*} 
\int_{\TT^2} \Lambda^2 \xi A_\theta[\xi] dx 
&= - \kappa \|\xi\|_{H^{3/2}}^2 + \left| \int_{\TT^2} \partial_k \RSZ^\perp \theta \cdot \nabla \xi \partial_k \xi dx \right| + \left| \int_{\TT^2} \partial_k ( \RSZ^\perp \xi \cdot \nabla \theta) \partial_k \xi dx \right| \notag\\
&\leq  - \kappa \|\xi\|_{H^{3/2}}^2 +  C \|\theta\|_{H^{3/2}} \|\xi\|_{H^{3/2}} \| \xi\|_{H^1} + C \| \xi\|_{H^{1/2}} \| \theta\|_{H^{2}} \| \xi\|_{H^{3/2}}.
\end{align*}
Here we have appealed to the Sobolev embedding $H^{1/2} \subset L^4$. Using the Poincar\'e inequality it follows that
\begin{align}
\int_{\TT^2} \Lambda^2 \xi A_\theta[\xi] dx 
&\leq  - \frac{\kappa}{2} \|\xi\|_{H^{3/2}}^2 +  \frac{c_{10}}{\kappa} \|\theta\|_{H^{2}}^2 \| \xi\|_{H^1}^2.
\label{eq:A:H1}
\end{align}
for some universal constant $c_10>0$.
Now, for any $\theta_0$ and any $t\geq 0$ the definition \eqref{eq:trace:Pn}, the inequality \eqref{eq:A:H1}, the normalization of the $\phi_j$'s (recall that $\{ \phi_{1}(t), \ldots , \phi_n(t)\}$ an orthornormal set spanning the linear span of  $\{ \xi_1(t), \ldots , \xi_n(t)\}$), and estimate \eqref{eq:M:2:A} yield
\begin{align} 
\frac{1}{T} \int_0^T {\rm Tr} (P_n(t) A_{\theta_0}(t) ) dt
&= \frac{1}{T} \int_0^T \sum_{j=1}^n \int_{\TT^2} (-\Delta \phi_j(t)) A_\theta[\phi_j(t)] dx dt \notag\\
&\leq - \frac{\kappa}{4} \frac{1}{T} \int_0^T \sum_{j=1}^n \|\phi_j(t) \|_{H^{3/2}}^2 dt + \frac{c_{10}}{\kappa} \frac{1}{T} \int_0^T \|\theta(t)\|_{H^2}^2 \sum_{j=1}^n \|\phi_j(t) \|_{H^1}^2 dt \notag\\
&\leq - \frac{\kappa}{4} \frac{1}{T} \int_0^T {\rm Tr} (P_n(t) \Lambda) + n \frac{c_{10}}{\kappa} \frac{1}{T} \int_0^T \|\theta(t)\|_{H^2}^2 dt \notag\\
&\leq - \frac{\kappa}{c_{11}} n^{3/2} + n \frac{c_{10}}{\kappa}  M_\AA^2 \label{eq:negative:trace}
\end{align}
where in the last inequality we have used that the eigenvalues $\{\lambda_j\}_{j\geq 1}$ of $\Lambda^{1/2}$ obey
\begin{align*} 
\lambda_j \geq \frac{1}{c_{11}} j^{1/2}
\end{align*}
for a sufficiently large universal constant $c_{11}>0$ (see e.g.~\cite{ConstantinFoias88}).
Choosing
\begin{align} 
N = N(\kappa,M_{\AA} ) = \left \lceil \left(  c_{10}c_{11} \kappa^{-2} M_\AA^2     \right)^2 \right\rceil
\label{eq:N:def}
\end{align}
the lemma now follows directly from \eqref{eq:negative:trace} and the definition \eqref{eq:limsup:trace:Pn}.
\end{proof}

The upshot of Proposition~\ref{prop:n:contraction} is that  $N$-dimensional volume elements decay exponentially in time. This also implies that the fractal (box-counting) dimension of $\AA$ is finite, and is bounded by this $N$.

\begin{theorem}[\bf Finite dimensionality of the attractor]
\label{thm:finite:attractor}
Let $N = N(\kappa^{-1} M_\AA )$ be as defined in \eqref{eq:N:def} above. Then the fractal dimension of $\AA$ is finite, and we have
${\rm dim}_f (\AA) \leq N$.
\end{theorem}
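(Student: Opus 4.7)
The plan is to apply the abstract framework developed in \cite{ConstantinFoias85, ConstantinFoias88}, which converts exponential contraction of $N$-dimensional volume elements along the linearized flow into a bound on the fractal dimension of the attractor. All the essential analytic ingredients are already in place: $\AA$ is compact in $H^1$ (Theorem~\ref{thm:attractor:existence}), the solution map $S(t)$ is continuously Fr\'echet differentiable on $\AA$ with compact derivative $S'(t,\theta_0)$ (Proposition~\ref{prop:unif:diff}), and Proposition~\ref{prop:n:contraction} provides the key quantitative decay estimate for $V_n(t)$ once $n \geq N$.

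First, I would recall the geometric picture behind the Constantin--Foias bound. Cover $\AA$ by finitely many $H^1$-balls of small radius $r$. By \eqref{eq:Frechet:differentiability}, up to an $o(r)$ error the image under $S(t)$ of such a ball centered at $\theta_0 \in \AA$ is contained in the image under $S'(t,\theta_0)$ of a ball, which by compactness of $S'(t,\theta_0)$ is contained in an ellipsoid whose semi-axes have lengths $r \mu_j(t,\theta_0)$, where $\mu_1 \geq \mu_2 \geq \ldots$ are the eigenvalues of $M(t,\theta_0) = (S'(t,\theta_0)^\ast S'(t,\theta_0))^{1/2}$. The product of the first $n$ such eigenvalues controls precisely the $n$-dimensional volume elements $V_n(t)$ carried by the linearized flow. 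Because $\AA$ is invariant, $S(t)\AA = \AA$, and iterating the covering argument at geometric scales one reduces the counting of small balls needed to recover $\AA$ to a product of volume expansion factors; this is the content of the Constantin--Foias covering lemma.

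Next, I would invoke the abstract result (see Chapter~V of \cite{ConstantinFoias88}) which states: if $S(t)$ maps the compact invariant set $\AA$ into itself, is uniformly differentiable on $\AA$ in the sense of Definition~\ref{def:conts:diff}, and if there is an integer $N$ such that
\[
\sup_{\theta_0 \in \AA}\; \sup_{P_n(0)}\; \la P_n A_{\theta_0}\ra < 0
\]
for every $n \geq N$, then $\mathrm{dim}_f(\AA) \leq N$. Proposition~\ref{prop:n:contraction} supplies exactly this hypothesis, with $N = N(\kappa, M_\AA)$ defined by \eqref{eq:N:def}, together with the volume estimate \eqref{eq:volume:decay}. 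The uniform differentiability and boundedness of $S'(t,\theta_0)$ required by the abstract theorem are guaranteed by Proposition~\ref{prop:unif:diff}; the a priori bounds \eqref{eq:M:32:A}--\eqref{eq:M:2:A} on $\AA$ make all constants in the linearization uniform in $\theta_0 \in \AA$.

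The main (and essentially only) obstacle I foresee is ensuring that the hypotheses of the abstract Constantin--Foias theorem are verified in the precise form required: in particular, the regularity of the function $e(r,t)$ in \eqref{eq:Frechet:differentiability} (already furnished by Proposition~\ref{prop:unif:diff} with $e(r,t) \sim r^{2-a} e^{Ct}$), and the uniformity in $\theta_0 \in \AA$ of the operator norm of $S'(t,\theta_0)$ on $H^1$, which again follows from the bounds on $\AA$. Once these are checked, the proof of Theorem~\ref{thm:finite:attractor} reduces to citing the abstract framework with $N$ given by \eqref{eq:N:def}, yielding $\mathrm{dim}_f(\AA) \leq N$ and a fortiori a bound on the Hausdorff dimension.
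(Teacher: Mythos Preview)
Your proposal is correct and follows essentially the same approach as the paper's own proof: both reduce the result to the abstract Constantin--Foias framework from \cite{ConstantinFoias85,ConstantinFoias88}, invoking the continuous differentiability and compactness of $S'(t,\theta_0)$ from Proposition~\ref{prop:unif:diff} together with the exponential decay of $n$-dimensional volumes (Proposition~\ref{prop:n:contraction} and \eqref{eq:volume:decay}) for $n\geq N$. The paper's proof is in fact even more terse, simply citing the relevant pages of \cite{ConstantinFoias88} and listing the same ingredients you identify.
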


\begin{proof}[Proof of Theorem~\ref{thm:finite:attractor}]
We follow precisely the lines of the argument in~\cite[pp.~115--130, and Chapter~14]{ConstantinFoias88}. The main ingredients are the continuous differentiability of $S(t)$ on $\AA$, the compactness of the linearization, and the exponential decay of large-dimensional volume elements which follows from \eqref{eq:volume:decay} and \eqref{eq:neg:Lyapunov}. We omit further details and refer to~\cite{ConstantinFoias88}.
\end{proof}

\appendix

\section{Fractional inequalities}
\label{app:calculus}

We recall the following fractional product (Kato-Ponce), commutator (Kenig-Ponce-Vega), and Sobolev estimates, cf.~\cite{KatoPonce88,KenigPonceVega91,Taylor91,SchonbekSchonbek03,Ju04} and references therein.

\begin{lemma}[\bf Fractional calculus]\label{lem:fractional:calculus}
Lef $f,g \in C^\infty(\TT^2)$, $s >0$, and $p \in (1,\infty)$. Then we have that 
\begin{align}
\|\Lambda^s (fg)\|_{L^{p}} \leq C  \|g\|_{L^{p_1}} \|\Lambda^s f\|_{L^{p_2}}+ C \|\Lambda^s g\|_{L^{p_3}}\|f\|_{L^{p_4}},
\label{eq:Hs:product}
\end{align}
where $1/p = 1/p_1 + 1/p_2 = 1/p_3 + 1/p_4$, and $p_2,p_3 \in (1,\infty)$, for a sufficiently large constant $C$ that depends only on $s,p,p_i$. Moreover,
\begin{align}
\| \Lambda^s (f  g) - f  \Lambda^s g \|_{L^p} 
\leq C  \|\nabla f \|_{L^{p_1}} \|\Lambda^{s-1} g\|_{L^{p_2}} + C \|\Lambda^s f\|_{L^{p_3}} \|g\|_{L^{p_4}}  \label{eq:Hs:commutator}
\end{align}
where $p_i$ are as above. 
For $q \in [p,\infty)$  and $f$ of zero mean we also have
\begin{align}
\| f\|_{L^q} \leq C \|\Lambda^{\frac{2}{p} -\frac{2}{q}} f\|_{L^p} \label{eq:Hs:Sobolev}
\end{align}
for a sufficiently large constant $C$ that depends only on $p$ and $q$.
\end{lemma}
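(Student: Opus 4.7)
All three inequalities are known on $\RR^d$ (see~\cite{KatoPonce88,KenigPonceVega91,Taylor91}) and they transfer to $\TT^2$ with essentially identical arguments once a periodic Littlewood--Paley decomposition is in place. The plan is therefore to set up that decomposition, then handle \eqref{eq:Hs:product} and \eqref{eq:Hs:commutator} together by paraproduct analysis, and close with a short Bernstein-type argument for \eqref{eq:Hs:Sobolev}.

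First I would fix a dyadic partition of unity $\{\phi_j\}_{j\geq -1}$ on $\ZZ^2_*$, with $\supp \phi_j \subset \{|k|\sim 2^j\}$ for $j\geq 0$ and $\phi_{-1}$ supported near the origin, and define the frequency projectors $\Delta_j f = \sum_k \phi_j(k)\hat f_k e^{ik\cdot x}$ and $S_j = \sum_{i\leq j-1}\Delta_i$. On the periodic side the Mikhlin--H\"ormander multiplier theorem and Bernstein's inequality
\[
\|\Delta_j f\|_{L^q} \leq C\, 2^{2j(1/p-1/q)} \|\Delta_j f\|_{L^p}, \qquad 1\leq p\leq q\leq \infty,
\]
hold in exactly the same form as on $\RR^2$; these two tools, together with the square-function characterization of $L^p$ for $p\in(1,\infty)$, are what make the classical proofs go through verbatim in the periodic setting. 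I will simply invoke these facts rather than reproving them.

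Next I would establish \eqref{eq:Hs:product} and \eqref{eq:Hs:commutator} simultaneously via Bony's paraproduct decomposition
\[
fg \;=\; T_f g + T_g f + R(f,g), \qquad T_f g = \sum_j S_{j-2}f\,\Delta_j g,\quad R(f,g)=\sum_{|i-j|\leq 2}\Delta_i f\,\Delta_j g.
\]
For \eqref{eq:Hs:product} one applies $\Lambda^s$ to each piece: in $\Lambda^s T_f g$ the frequency of the output is comparable to $2^j$, so by Bernstein and H\"older each summand is controlled by $\|S_{j-2}f\|_{L^{p_4}}\,2^{js}\|\Delta_j g\|_{L^{p_3}}$, and the square-function inequality repackages the sum as $\|f\|_{L^{p_4}}\|\Lambda^s g\|_{L^{p_3}}$; symmetrically for $T_g f$. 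The remainder $R(f,g)$ is handled by summing in the low-frequency output and distributing $\Lambda^s$ on the higher factor. For the commutator \eqref{eq:Hs:commutator} I would write
\[
\Lambda^s(fg) - f\Lambda^s g \;=\; \bigl[\Lambda^s, T_f\bigr]g + \Lambda^s T_g f + \Lambda^s R(f,g) - (f-S_{j-2}f)\text{-type corrections},
\]
and estimate the paracommutator $[\Lambda^s,T_f]g$ on each frequency block by applying the mean value theorem to the symbol $|k|^s$ around the center of $\supp\phi_j$: this extracts one derivative from $f$ and leaves $\Lambda^{s-1}$ on $g$, giving the first term on the right of \eqref{eq:Hs:commutator}. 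The high--high piece $\Lambda^s T_g f + \Lambda^s R(f,g)$ produces the second term $\|\Lambda^s f\|_{L^{p_3}}\|g\|_{L^{p_4}}$. The main bookkeeping obstacle is verifying that all the multiplier operators encountered (frequency cutoffs, $\Lambda^s$ localized to an annulus, the symbol-derivative remainders) obey Mikhlin bounds uniform in the dyadic parameter so the square-function step is legitimate; this is standard.

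Finally, for the Sobolev estimate \eqref{eq:Hs:Sobolev}, the mean-zero assumption removes the $k=0$ mode, so $\Lambda^{-(2/p-2/q)}$ is a well-defined bounded operator from $L^p(\TT^2)$ to $L^q(\TT^2)$ by the periodic Hardy--Littlewood--Sobolev inequality: its kernel differs from the whole-space Riesz potential kernel $c|x|^{-(2-(2/p-2/q))\cdot\ldots}$ by a smooth remainder absorbed into the constant, exactly as in the construction of the periodic Riesz kernels $R_j^\ast$ in \eqref{eq:periodic:Rj}. Writing $f = \Lambda^{-(2/p-2/q)}\Lambda^{2/p-2/q}f$ then yields \eqref{eq:Hs:Sobolev}. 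Alternatively one can sum Bernstein's inequality $\|\Delta_j f\|_{L^q}\leq C 2^{2j(1/p-1/q)}\|\Delta_j f\|_{L^p}$ against the Littlewood--Paley square function. I expect no serious obstacle here, since the result is a classical consequence of either route and the mean-zero hypothesis handles the only subtlety (the zero frequency).
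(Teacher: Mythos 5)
The paper does not actually prove this lemma: it is recalled as a standard set of estimates with citations to Kato--Ponce, Kenig--Ponce--Vega, Taylor, and the SQG literature, so there is no in-paper argument to compare against line by line. Your paraproduct sketch is the standard modern route to exactly these results (the original whole-space proofs in \cite{KatoPonce88,KenigPonceVega91} proceed somewhat differently, but the Littlewood--Paley/Bony decomposition with periodic Bernstein and Mikhlin--H\"ormander bounds is the argument most texts use, and it transfers to $\TT^2$ as you say, with the zero mode harmless because $\Lambda^s$ annihilates it). The outline is sound; what would need to be made precise if this were written out in full is (i) the commutator decomposition, where your ``$(f-S_{j-2}f)$-type corrections'' is doing real work --- those terms, together with the high--high and $T_g f$ pieces, are precisely what generate the $\|\Lambda^s f\|_{L^{p_3}}\|g\|_{L^{p_4}}$ contribution, and the paracommutator $[\Lambda^s,T_f]$ estimate via a first-order expansion of the symbol $|k|^s$ on each annulus must be checked to give multiplier bounds uniform in $j$ (standard, but it is the crux); (ii) the admissibility of the endpoints $p_1,p_4=\infty$ (which the paper uses, e.g.\ with $\|\nabla f\|_{L^\infty}$ replaced by H\"older/$L^\infty$-type factors), which your square-function argument does accommodate since only $p_2,p_3\in(1,\infty)$ enter the Littlewood--Paley step; and (iii) for \eqref{eq:Hs:Sobolev}, the periodic Riesz-potential comparison requires the order $2/p-2/q$ to lie strictly below the dimension $2$, which holds because $p>1$, and the case $q=p$ is the trivial identity on mean-zero functions. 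None of these are gaps in the idea, only bookkeeping; your proposal is a legitimate self-contained proof of a lemma the paper chose to import from the literature.
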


We conclude this appendix by giving a sketch of the proof of Proposition~\ref{prop:Poincare}.
The detailed proof can be found in~\cite{ConstantinGlattHoltzVicol13}, and we give below only the main ideas.

\begin{proof}[Proof of Proposition~\ref{prop:Poincare}]
The case $\alpha=0$ trivially holds, while in the case $\alpha=2$ estimate \eqref{eq:Lp:Poincare} follows upon integration by parts. Therefore, henceforth consider $\alpha \in (0,2)$. For $p=2$ inequality \eqref{eq:Lp:Poincare} holds due to the Parseval's identity, and the rest of the proof we let $p \geq 4$ be even. 
For $0<\alpha<2$ we have 
\begin{align}
& \int \theta^{p-1}(x) \Lambda^\alpha \theta(x) dx \notag\\
&= \frac 12 P.V. \intint \left(\theta^{p-1}(x) -\theta^{p-1}(y) \right) \left( \theta(x) - \theta(y) \right) K_\alpha(x-y) dy dx \notag\\
&= \frac{1}{2p} P.V. \intint \left( p\left(\theta^{p-1}(x) -\theta^{p-1}(y) \right) \left( \theta(x) - \theta(y) \right)  - 2 \left( \theta^{p/2}(x) -\theta^{p/2}(y) \right)^2  \right) K_\alpha(x-y) dy dx \notag\\
&\qquad + \frac 1p P.V. \intint \left( \theta^{p/2}(x) -\theta^{p/2}(y) \right)^2   K_\alpha(x-y) dy dx \notag\\
&= \frac{1}{2p} P.V. \intint f_p(\theta(x),\theta(y)) K_\alpha(x-y) dy dx + \frac{1}{p} \| \Lambda^{\alpha/2}  ( \theta^{p/2}) \|_{L^2}^2  \notag\\
&=: \frac{1}{2p} {\mathcal T} +  \frac{1}{p} \| \Lambda^{\alpha/2}  ( \theta^{p/2}) \|_{L^2}^2 
\label{eq:identity}
\end{align}
where the double integral is over ${\mathbb{T}}^{2d}$, and we have defined
\begin{align*}
f_p(a,b) = p(a^{p-1} - b^{p-1}) (a-b) -2 (a^{p/2} - b^{p/2})^2.
\end{align*}
it can be easily seen that $f_p(a,b) \geq 0$ on $\RR^2$ when $p$ is even, and so the term ${\mathcal T}$ is positive. The {main idea} is that exactly ${\mathcal T}$ gives the $\|\theta\|_{L^p}^p$ term in the lower bound \eqref{eq:Lp:Poincare}.

We next claim that for $p\geq 4$ even, and $a,b \in \RR$ we have
\begin{align}
f_p(a,b) \geq \frac{p-2}{2} (a-b)^2 a^{p-2}. \label{eq:trick}
\end{align}
This fact may be checked directly using calculus. 
Using \eqref{eq:trick} we now prove \eqref{eq:Lp:Poincare}. Since $K_\alpha$ is positive, letting $e_1 = (1,0, \ldots,0)$, we have
\begin{align}
{\mathcal T} &\geq \frac{p-2}{2} P.V. \intint (\theta(x) - \theta(y))^2 \theta(x)^{p-2} K_\alpha(x-y) dy dx \notag\\
&\geq \frac{p-2}{2}   c_{d,\alpha} \intint (\theta(x) - \theta(y))^2 \theta(x)^{p-2} \frac{1}{|x-y-2\pi e_1|^{d+\alpha}} dy dx\notag\\
&\geq \frac{(p-2) c_{d,\alpha} }{2 (2 \pi + |{\rm diam}({\mathbb{T}}^d)|)^{d+\alpha}} \intint (\theta(x) - \theta(y))^2 \theta(x)^{p-2} dy dx \notag\\
&=  \frac{(p-2) c_{d,\alpha} }{2(2 \pi + |{\rm diam}({\mathbb{T}}^d)|)^{d+\alpha}} \intint \left( \theta^p(x) - 2 \theta^{p-1}(x) \theta(y)+ \theta^{p-2}(x) \theta^2(y) \right) dy dx\notag\\
&\geq  \frac{(p-2) c_{d,\alpha} }{2(2 \pi + |{\rm diam}({\mathbb{T}}^d)|)^{d+\alpha}} \int_{{\mathbb{T}}^d} \left( \theta^p(x) |{\mathbb{T}}^d| - 2 \theta^{p-1}(x)  \int_{{\mathbb{T}}^d} \theta(y) dy \right)  dx.
\label{eq:lower}
\end{align}
At this point we use that $\theta$ has zero mean. It then follows from \eqref{eq:lower},  that 
\begin{align}
{\mathcal T} \geq \frac{(p-2) c_{d,\alpha}  |{\mathbb{T}}^d|}{2(2 \pi + |{\rm diam}({\mathbb{T}}^d)|)^{d+\alpha}}  \| \theta\|_{L^p}^p.
\label{eq:Lp:Poincare:constant}
\end{align}
This proves \eqref{eq:Lp:Poincare} with the constant 
\begin{align*}
\frac{(p-2) 2^\alpha \Gamma( (n+\alpha)/2) |{\mathbb{T}}^d|}{4 p (2 \pi + |{\rm diam}({\mathbb{T}}^d)|)^{d+\alpha}|\Gamma(-\alpha/2)| \pi^{d/2}} 
\geq \frac{2^\alpha \Gamma( (n+\alpha)/2) |{\mathbb{T}}^d|}{8 (2 \pi + |{\rm diam}({\mathbb{T}}^d)|)^{d+\alpha}|\Gamma(-\alpha/2)| \pi^{d/2}} 
= \frac{1}{C_{d,\alpha}}
\end{align*}
for any $p\geq 4$. When $d=2$ and $\alpha =1$ the above constant $C_{d,\alpha}$ may be taken to equal $2^9 \pi^2$.
\end{proof}

\section{Interchanging the spatial supremum with the time derivative}
\label{sec:sup:dt}

\begin{lemma}[\bf Switching $d/dt$ and $\sup$]
\label{lem:switch:dt:sup}
Let $\KK \subset \RR^d$ be compact, and let $T>0$.  Consider a function
\[
f \colon (0,T) \times \KK \to [0,\infty)
\]
and assume that for every $\lambda \in \KK$ the functions
\[
f_\lambda(\cdot) = f(\cdot,\lambda) \colon (0,T) \to [0,\infty) 
\qquad \mbox{and} \qquad 
\dot{f}_\lambda(\cdot) = (\partial_t f)(\cdot,\lambda) \colon (0,T) \to \RR
\]
are continuous.
Additionally, assume that the following properties hold:
\begin{itemize}
 \item[(i)] The families $\{f_\lambda\}_{\lambda \in \KK}$ and $\{\dot{f}_\lambda\}_{\lambda \in \KK}$ are uniformly equicontinuous with respect to $t$.
 \item[(ii)] For every  $t\in (0,T)$, the functions $f(t,\cdot) \colon \KK \to [0,\infty)$ and $(\partial_t f)(t,\cdot) \colon \KK \to \RR$ are continuous.
\end{itemize}
Lastly, define
\begin{align*}
F(t) = \sup_{\lambda \in \KK} f_\lambda(t)
\end{align*}
Then, for almost every $t\in (0,T)$ the function $F$ is differentiable at $t$, and there exists $ \lambda_* =  \lambda_*(t) \in \KK$ such that simultaneously   
\begin{align}
\dot F(t) = \dot{f}_{\lambda_*} (t) \quad \mbox{and} \quad F(t) = f_{\lambda_*}(t)
\label{eq:F:at:max}
\end{align}
hold.
\end{lemma}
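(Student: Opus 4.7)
The plan is to establish three things in sequence: (a) the supremum $F$ is attained at each time, so the maximizer set $\Lambda(t):=\{\lambda\in\KK:f_\lambda(t)=F(t)\}$ is non-empty and compact; (b) $F$ is locally Lipschitz on $(0,T)$ and hence differentiable almost everywhere; (c) at any point of differentiability $t$ and any maximizer $\lambda_*\in\Lambda(t)$, the one-sided difference quotient argument forces $\dot F(t)=\dot f_{\lambda_*}(t)$.

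For (a) simply note that hypothesis (ii) says $f(t,\cdot):\KK\to[0,\infty)$ is continuous on the compact set $\KK$, so the supremum is attained and $\Lambda(t)$ is a non-empty closed subset of $\KK$. For (b), fix a compact subinterval $I\subset (0,T)$ and any $t_0\in I$. Hypothesis (ii) gives $M_0:=\sup_{\lambda\in\KK}|\dot f_\lambda(t_0)|<\infty$, and the uniform equicontinuity of $\{\dot f_\lambda\}_{\lambda\in\KK}$ supplied by (i) gives a $\delta>0$ such that $|\dot f_\lambda(t)-\dot f_\lambda(t_0)|<1$ for $|t-t_0|<\delta$, uniformly in $\lambda$. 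Covering $I$ by finitely many such $\delta$-intervals yields a constant $M=M(I)$ with $|\dot f_\lambda(t)|\le M$ for all $(t,\lambda)\in I\times \KK$. The mean value theorem then gives $|f_\lambda(t)-f_\lambda(s)|\le M|t-s|$ uniformly in $\lambda$, so that $|F(t)-F(s)|\le M|t-s|$ on $I$. Thus $F$ is locally Lipschitz, and by Rademacher's theorem on $\RR$ it is differentiable almost everywhere in $(0,T)$.

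For (c), fix a point $t\in(0,T)$ at which $F'(t)$ exists, and pick any $\lambda_*\in\Lambda(t)$. For every $h\neq 0$ one has $F(t+h)\ge f_{\lambda_*}(t+h)$ and $F(t)=f_{\lambda_*}(t)$, so
\begin{equation*}
F(t+h)-F(t)\ \ge\ f_{\lambda_*}(t+h)-f_{\lambda_*}(t).
\end{equation*}
Dividing by $h>0$ and sending $h\to 0^+$, using that $f_{\lambda_*}$ is continuously differentiable at $t$, gives $\dot F(t)\ge \dot f_{\lambda_*}(t)$; dividing by $h<0$ reverses the inequality and sending $h\to 0^-$ yields $\dot F(t)\le \dot f_{\lambda_*}(t)$. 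Combining the two bounds produces the desired identity $\dot F(t)=\dot f_{\lambda_*}(t)$, and together with $F(t)=f_{\lambda_*}(t)$ this is exactly \eqref{eq:F:at:max}.

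There is no serious obstacle here: the substantive input is the uniform equicontinuity hypothesis (i), which is used precisely to upgrade the pointwise bound on $\dot f_\lambda(t_0)$ (coming from compactness of $\KK$ and continuity in $\lambda$) to a locally uniform bound in $t$, and thence to local Lipschitz continuity of $F$. The one-sided comparison step is then completely standard; continuity of the derivative in $t$ for each fixed $\lambda$ is what lets the inequality on difference quotients pass to the limit.
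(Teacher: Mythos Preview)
Your proof is correct, and step (c) is in fact cleaner than the paper's. The paper establishes Lipschitz continuity of $F$ essentially as you do, but then constructs $\lambda_*$ differently: it fixes a point $t$ of differentiability, chooses maximizers $\lambda(t+\Delta t_n)$ at nearby \emph{future} times, extracts a subsequential limit $\lambda_*$ by compactness of $\KK$, and then has to verify separately (using the equicontinuity of $\{f_\lambda\}$ and the continuity of $f(t,\cdot)$) that this limit is actually a maximizer at time $t$. The upper bound $\dot F(t)\le \dot f_{\lambda_*}(t)$ is then obtained through a somewhat involved computation that explicitly invokes the uniform equicontinuity of $\{\dot f_\lambda\}$ and the continuity of $(\partial_t f)(t,\cdot)$ in $\lambda$.

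Your route avoids all of this: you pick $\lambda_*$ directly as a maximizer at time $t$ (so $F(t)=f_{\lambda_*}(t)$ is immediate), and then exploit the \emph{two-sided} differentiability of $F$ at $t$ --- the right difference quotient gives $\dot F(t)\ge \dot f_{\lambda_*}(t)$, the left one gives $\dot F(t)\le \dot f_{\lambda_*}(t)$. This is the classical envelope/Danskin argument, and it yields a slight bonus: the identity $\dot F(t)=\dot f_{\lambda_*}(t)$ holds for \emph{every} maximizer $\lambda_*\in\Lambda(t)$, not just for one constructed via a limiting procedure. In your argument the uniform equicontinuity of $\{\dot f_\lambda\}$ and the continuity of $(\partial_t f)(t,\cdot)$ in $\lambda$ are used only to obtain the local Lipschitz bound in (b); they play no role in (c).
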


\begin{proof}[Proof of Lemma~\ref{lem:switch:dt:sup}] The proof follows along the lines of  \cite[Theorem 4.1]{CordobaCordoba04} and \cite[Lemma A.3]{KukavicaVicol11b}, but for the sake of completeness we present here the full argument.

The uniform equicontinuity of $\{\dot f_\lambda(t)\}_{\lambda \in \KK}$ implies that there exists $\delta>0$ such that 
\[
\sup_{\lambda \in \KK} |\dot f_\lambda (t) - \dot f_\lambda(s)| \leq 1 \quad \mbox{whenever} \quad |t-s|<\delta.
\]
Since $(\partial_t f)(T/2,\lambda)$ is a continuous function of $\lambda$, it attains its maximum  over the compact $\KK$ at some $\lambda_0$, and we obtain from the above that 
\[
\sup_{t \in (0,T)} \sup_{\lambda \in \KK} |\dot f_\lambda(t)| \leq |\dot f_{\lambda_0}(T/2)| + \frac{T}{\delta} =: M < \infty
\]
Therefore, for $t \in (0,T)$, and $ |\Delta t| >0 $ sufficiently small so that $t + \Delta t \in (0,T)$,  we have
\begin{align*}
|F(t) - F(t+\Delta t)| 
= \left| \sup_{\lambda \in \KK} f_\lambda(t) - \sup_{\lambda \in \KK} f_\lambda(t+\Delta t) \right|
&\leq \sup_{\lambda \in \KK} |f_\lambda(t) - f_\lambda(t+\Delta t)| \notag\\
&= |\Delta t| \sup_{\lambda \in \KK} |\dot{f}_{\lambda}(\tau_\lambda)| \qquad \mbox{(for some }\tau_\lambda \in (t,t+\Delta t)\mbox{)}\notag\\
&\leq |\Delta t| M.
\end{align*}
Therefore, $F$ is Lipschitz continuous on $(0,T)$, and Rademacher's theorem implies that 
$F$ is differentiable almost everywhere.

Fix $t \in (0,T)$ such that $F$ is differentiable at $t$, and let $\Delta t >0$. In view of the continuity in $\lambda$ of $f(t+\Delta t,\lambda)$ and the compactness of $\KK$, there exists $\lambda(t+\Delta t) \in \KK$ such that 
\begin{align*}
F(t + \Delta t) = f_{\lambda(t+\Delta t)}(t).
\end{align*}
Since $\KK$ is compact, we can find a sequence $\Delta t_n \to 0^+$ and a point $\lambda_* \in \KK$ such that 
\begin{align}
\lambda(t+\Delta t_n) \to \lambda_* \quad \mbox{as} \quad \Delta t_n \to 0^+.
\label{eq:lambda*}
\end{align}
We now check that 
\[ 
F(t) = f_{\lambda_*}(t),\]
i.e., the second statement in \eqref{eq:F:at:max}.
We write
\begin{align}
|F(t)  - f_{\lambda_*}(t) |
&\leq \lim_{\Delta t_n \to 0^+}  \left| F(t) - F(t+ \Delta t_n) \right| \notag\\
&\quad + \lim_{\Delta t_n \to 0^+}  \left|  f_{\lambda(t+\Delta t_n)}(t+\Delta t_n) - f_{\lambda(t+\Delta t_n)}(t) \right| \notag\\
&\quad + \lim_{\Delta t_n \to 0^+}   \left|  f_{\lambda(t+\Delta t_n)}(t) - f_{\lambda_*}(t) \right|.
\label{eq:F:lsc}
\end{align}
Note that 
\begin{align*}
|F(t) - F(t+\Delta t_n)| 
   \leq \sup_{\lambda \in \KK} |f_\lambda(t) - f_\lambda(t+\Delta t_n)|
\end{align*}
and 
\begin{align*}
\left| f_{\lambda(t+\Delta t_n)}(t + \Delta t_n) -  f_{\lambda(t+\Delta t_n)}(t) \right| \leq \sup_{\lambda \in \KK} |f_\lambda(t) - f_\lambda(t+\Delta t_n)|.
\end{align*}
In view of the equicontinuity at $t$ of the family $\{f_\lambda\}_{\lambda \in \KK}$, we have
\begin{align*}
\lim_{\Delta t_n \to 0^+} \sup_{\lambda \in \KK} |f_\lambda(t) - f_\lambda(t+\Delta t_n)| = 0
\end{align*}
and thus the first two limits on the right side of \eqref{eq:F:lsc} vanish. The third limit vanishes in view of \eqref{eq:lambda*} and the assumption of continuity with respect to $\lambda$ of $f(t,\lambda)$, at any given fixed $t$. This proves the second part of \eqref{eq:F:at:max}.

Let $\eps>0$ and fix $t \in(0,T)$. In view of the uniform equicontinuity of the family $\{\dot f_\lambda\}_{\lambda \in \KK}$, we have that there exists $\delta_1 = \delta_1(\eps)>0$, such that 
\begin{align}
\sup_{\lambda \in \KK} |\dot{f}_\lambda(t) - \dot{f}_\lambda(\tau)| < \eps \quad \mbox{whenever} \quad |t-\tau| < \delta_1.
\label{eq:delta:1}
\end{align}
Also, in view of the continuity with respect to $\lambda$ of $(\partial_t f)(t,\lambda)$ at a fixed $t$, there exists $\delta_2 =\delta_2(\eps,t) >0$ such that 
\begin{align}
|\dot{f}_{\lambda} (t) - \dot{f}_{\lambda_*}(t) | < \eps \quad \mbox{whenever} \quad |\lambda - \lambda_*| < \delta_2.
\label{eq:delta:2}
\end{align}
Let $n$ be sufficiently large, so that $0 < \Delta t_n < \delta_1$ and $|\lambda(t+\Delta t_n) - \lambda_*| < \delta_2$, which is possible in view of \eqref{eq:lambda*}. Using the fundamental theorem of calculus, \eqref{eq:delta:1}, \eqref{eq:delta:2}, and the fact that $\Delta t_n > 0$, we obtain\begin{align*}
&\frac{F(t+\Delta t_n) - F(t)}{\Delta t_n} \notag\\
&= \frac{f_{\lambda(t+\Delta t_n)}(t+\Delta t_n) - f_{\lambda_*}(t)}{\Delta t_n} \notag\\
&= \frac{f_{\lambda(t+\Delta t_n)}(t+\Delta t_n) - f_{\lambda(t+\Delta t_n)}(t)}{\Delta t_n} + \frac{f_{\lambda(t+\Delta t_n)}(t) - f_{\lambda_*}(t)}{\Delta t_n} \notag\\
&\leq \frac{1}{\Delta t_n} \int_t^{t+\Delta t_n} \dot{f}_{\lambda(t+\Delta t_n)}(s) ds  \notag\\
&= \frac{1}{\Delta t_n} \int_t^{t+\Delta t_n} \dot{f}_{\lambda_*}(s) ds +  \frac{1}{\Delta t_n} \int_t^{t+\Delta t_n} \left( \dot{f}_{\lambda(t+\Delta t_n)}(s) - \dot{f}_{\lambda_*}(s) \right) ds \notag\\
&= \frac{1}{\Delta t_n} \int_t^{t+\Delta t_n} \dot{f}_{\lambda_*}(t) + \left( \dot{f}_{\lambda_*}(s) - \dot{f}_{\lambda_*}(t) \right) ds \notag\\
&\qquad + \frac{1}{\Delta t_n} \int_t^{t+\Delta t_n} \left( \dot{f}_{\lambda(t+\Delta t_n)}(s) - \dot{f}_{\lambda(t+\Delta t_n)}(t)  \right) + \left(\dot{f}_{\lambda_*}(t)- \dot{f}_{\lambda_*}(s) \right) + \left( \dot{f}_{\lambda(t+\Delta t_n)}(t) - \dot{f}_{\lambda_*}(t) \right) ds  \notag\\
&\leq \dot{f}_{\lambda_*}(t) + 4 \eps,
\end{align*}
which shows that 
\begin{align}
\dot F(t) = \lim_{\Delta t_n \to 0+} \frac{F(t+\Delta t_n) - F(t)}{\Delta t_n} \leq \dot{f}_{\lambda_*}(t) \label{eq:F:d+}
\end{align}
since $\eps$ was arbitrary, and we chose $t$ so that $\dot F(t)$ exists.
Conversely, for $\Delta t_n >0$ we have
\begin{align*}
\frac{F(t+\Delta t_n) - F(t)}{\Delta t_n} &= \frac{f_{\lambda(t+\Delta t_n)}(t+\Delta t_n) - f_{\lambda_*}(t)}{\Delta t_n} \notag\\
&= \frac{f_{\lambda(t+\Delta t_n)}(t+\Delta t_n) - f_{\lambda_*}(t+\Delta t_n)}{\Delta t_n} + \frac{f_{\lambda_*}(t+\Delta t_n) - f_{\lambda_*}(t)}{\Delta t_n} \notag\\
&\geq  \frac{f_{\lambda_*}(t+\Delta t_n) - f_{\lambda_*}(t)}{\Delta t_n} 
\end{align*}
which shows that
\begin{align}
\dot F(t) = \lim_{\Delta t_n \to 0+} \frac{F(t+\Delta t_n) - F(t)}{\Delta t_n} \geq \dot{f}_{\lambda_*}(t)\label{eq:F:d-}
\end{align}
Estimates \eqref{eq:F:d+} and \eqref{eq:F:d-} prove the first part of \eqref{eq:F:at:max}, and hence of the lemma.
\end{proof}

\begin{corollary}
\label{cor:switch:dt:sup}
Assume that $\theta \in C^\beta((0,T);C^{1,\beta}(\TT^2))$ is a classical solution of \eqref{eq:SQG:1}--\eqref{eq:SQG:2} on $(0,T)$, for some $\beta \in (0,1)$,  with force $f \in C^\beta$. For $ 0 < \alpha <  \beta/2 $ we define 
 \begin{align*}
v(t,x;h) = \frac{|\delta_h\theta(x,t)|}{|h|^\alpha}  = \frac{|\theta(x+h,t) - \theta(x,t)|}{|h|^\alpha} \colon \TT^2 \times (0,T) \times \TT^2 \to \RR
\end{align*}
with the convention that $v(t,x;0) = 0$.
Then for almost every $t \in (0,T)$, there exists a pair 
\[
(\bar x,\bar h) = (\bar x(t),\bar h(t)) \in \TT^2 \times \TT^2
\] 
such that 
\begin{align*}
v(t,\bar x;\bar h)^2 = \sup_{(x,h) \in \TT^2 \times \TT^2} v(t,\cdot;\cdot)^2
\end{align*}
and moreover 
\begin{align*}
\frac{d}{dt} \left( \sup_{(x,h) \in \TT^2 \times \TT^2} v(t,\cdot;\cdot)^2 \right) = (\partial_t v^2)(t,\bar x;\bar h)
\end{align*}
holds.
\end{corollary}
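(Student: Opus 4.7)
The plan is to apply Lemma~\ref{lem:switch:dt:sup} directly to $f_\lambda(t) := v(t,x;h)^2$, viewing $\lambda = (x,h)$ as ranging over the compact set $\KK = \TT^2 \times \TT^2$ (identified with the fundamental domain $[-\pi,\pi]^4$ in $\RR^4$). Once the continuity and equicontinuity hypotheses of the lemma are verified, the two conclusions in \eqref{eq:F:at:max} translate directly into the two claims of the corollary.

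The only subtle point is the behavior at $h=0$, where the weight $|h|^{-2\alpha}$ is singular but the numerator vanishes. First I would use $\theta(\cdot,t) \in C^{1,\beta}$ together with the mean value theorem to bound
\[
v(t,x;h)^2 \leq \|\nabla\theta(\cdot,t)\|_{L^\infty}^2 |h|^{2(1-\alpha)},
\]
which vanishes as $|h|\to 0$ (since $\alpha < 1$), consistent with the convention $v(t,x;0)=0$; combined with continuity of $v^2$ off $h=0$, this shows $v^2(t,\cdot;\cdot)$ is continuous on $\KK$ for every $t$. Next I would compute
\[
\partial_t v^2 = \frac{2\,\delta_h\theta\,\delta_h(\partial_t\theta)}{|h|^{2\alpha}},
\qquad \partial_t\theta = -u\cdot\nabla\theta - \kappa\Lambda\theta + f,
\]
and observe that since $u = \RSZ^\perp\theta \in C^{1,\beta}_x$, $\Lambda\theta \in C^\beta_x$, and $f \in C^\beta$, the equation yields $\partial_t\theta \in C^\beta_t C^\beta_x$ on any compact subinterval of $(0,T)$. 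Consequently
\[
|\partial_t v^2(t,x;h)| \leq C\,|h|^{1+\beta-2\alpha} \to 0 \quad \text{as } |h|\to 0,
\]
the decay being genuine because $\alpha < \beta/2$. Hence $\partial_t v^2$ extends continuously to $h=0$ by zero and is continuous on $\KK$ for each $t$.

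Finally I would check uniform equicontinuity in $t$ of the families $\{v^2(\cdot,x;h)\}$ and $\{\partial_t v^2(\cdot,x;h)\}$ indexed by $(x,h)\in\KK$. Away from a fixed neighborhood of $h=0$ this is inherited from the $C^\beta_t$ regularity of $\theta$ and of $\partial_t\theta$ on compact subintervals of $(0,T)$, while inside such a neighborhood the pointwise bounds above make both $v^2$ and $\partial_t v^2$ uniformly small, so equicontinuity there is trivial by the triangle inequality. With all hypotheses of Lemma~\ref{lem:switch:dt:sup} met, the lemma produces, for a.e.\ $t \in (0,T)$, a maximizer $(\bar x(t),\bar h(t)) \in \TT^2\times\TT^2$ at which $v^2$ attains its spatial supremum and where $\partial_t v^2(t,\bar x;\bar h) = \frac{d}{dt}\sup_{(x,h)} v^2(t,x;h)$, which is precisely the statement of the corollary. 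The main technical obstacle is transferring $C^\beta$ time regularity from $\theta$ to $\partial_t\theta$ via the SQG equation; this is where both the joint assumption $\theta \in C^\beta((0,T);C^{1,\beta})$ and boundedness of Riesz transforms on Hölder spaces do the essential work.
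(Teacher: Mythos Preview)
Your proposal is correct and takes essentially the same approach as the paper: both apply Lemma~\ref{lem:switch:dt:sup} to $f_\lambda(t) = v(t,x;h)^2$ with $\lambda = (x,h) \in \KK = \TT^2\times\TT^2$, using the SQG equation to transfer regularity from $\theta$ to $\partial_t\theta$ and verifying the continuity/equicontinuity hypotheses. The only cosmetic difference is that the paper absorbs the singular weight $|h|^{-2\alpha}$ uniformly by invoking the $C^{2\alpha}$ seminorm of $\theta$ (legitimate since $2\alpha<\beta$), writing for instance $|f_\lambda(t)-f_\lambda(s)| \le ([\theta(t)]_{C^{2\alpha}}+[\theta(s)]_{C^{2\alpha}})\cdot 2\|\theta\|_{C^\beta_t L^\infty_x}|t-s|^\beta$, rather than splitting into the regimes $|h|$ small and $|h|$ bounded away from zero as you do; both verifications are valid.
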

\begin{proof}[Proof of Corollary~\ref{cor:switch:dt:sup}]
The proof follows by applying Lemma~\ref{lem:switch:dt:sup} to 
the function
\[
f(t,\lambda) = v(t,x;h)^2
\]
with $\lambda = (x;h) \in \TT^2 \times \TT^2 = \KK$, which is clearly compact. It is clear that $f$ is a non-negative function.

By assumption, for fixed $\lambda = (x;h)$, the function 
\[ 
f_\lambda(t) =f(\lambda ,t) = v(t,x;h)^2 = \frac{(\theta(x+h,t) - \theta(x,t))^2}{|h|^{2\alpha}}
\]
is continuous with respect to $t$. In fact, since $2\alpha < \beta <1$, for $t,s \in [0,T]$ and $\lambda = (x;h)$ we have that 
\begin{align*} 
| f_\lambda(t) - f_\lambda(s) | 
&= \frac{1}{|h|^{2\alpha}} \left| \delta_h \theta(x,t) + \delta_h \theta(x,s) \right| \left| \delta_h \theta(x,t) - \delta_h \theta(x,s) \right| \notag\\
&\leq \left( [\theta(t)]_{C^{2\alpha}} + [\theta(s)]_{C^{2\alpha}} \right) \left( | \theta(x+h,t) - \theta(x+h,s) | + |\theta(x,t) - \theta(x,s)| \right) \notag\\
&\leq 4 \|\theta\|_{L^\infty(0,T;C^\beta)} |t-s|^\beta \|\theta\|_{C^\beta(0,T;L^\infty)} \notag\\
&\leq 4 |t-s|^{\beta} \|\theta\|_{C^\beta(0,T;C^{\beta})}^2
\end{align*}
which shows that the family $\{f_\lambda\}_{\lambda \in \KK}$ is uniformly equicontinuous on $(0,T)$.

Using the equation \eqref{eq:SQG:1} obeyed by $\theta$, we moreover have that 
\begin{align*}
\dot f_\lambda(t) &= (\partial_t f)(t,\lambda)  = \frac{\partial}{\partial t} v(t,x;h)^2 \notag\\
&= \frac{2 (\theta(x+h,t)-\theta(x,t)) }{|h|^{2\alpha}} (\partial_t \theta(x+h,t) - \partial_t \theta(x,t)) \notag\\
&=\frac{2 \delta_h \theta(x,t) }{|h|^{2\alpha}} \left( \delta_h f(x) - \kappa \delta_h \Lambda \theta(x,t) - u(x,t)  \cdot \delta_h \nabla \theta(x,t) -  \delta_h u(x,t) \cdot \theta(x+h,t) \right)
\end{align*}
is also continuous with respect to $t$, since by assumption $\nabla \theta, \Lambda \theta \in C^\beta(0,T;C^\beta)$, and $u \in C^\beta(0,T;C^{1,\beta})$. 
To verify the equicontinuity of the family $\{\dot f_\lambda\}_{\lambda \in \KK}$, we note that for $t,s\in [0,T]$ and $(x;h) \in \TT^2 \times \TT^2$ it holds that
\begin{align*} 
&| \dot f_\lambda(t) - \dot f_\lambda(s) | \notag\\
&\qquad \leq C |t-s|^\beta \|\theta\|_{C^\beta(0,T;L^\infty)} \left( \|f\|_{C^{2\alpha}} + \kappa \|\theta\|_{L^\infty(0,T;C^{1,2\alpha})} + \|\theta\|_{L^\infty(0,T;C^{2\alpha})} \|\theta\|_{L^\infty(0,T;C^{1,2\alpha})} \right) \notag\\
&\qquad\qquad  + C |t-s|^\beta \|\theta\|_{L^\infty(0,T;C^{2\alpha})} \left( \kappa \|\theta\|_{C^\beta(0,T;C^{1,\beta})} + \|\theta\|_{C^\beta(0,T;C^{1,\beta})}^2 \right) \notag\\
&\qquad\leq C |t-s|^\beta \left( \|f\|_{C^{\beta}} + \kappa \|\theta\|_{C^\beta(0,T;C^{1,\beta})}  + \|\theta\|_{C^\beta(0,T;C^{1,\beta})}^2 \right)
\end{align*}
which by assumption is a finite number times $|t-s|^\beta$.

It is left to check that for fixed $t \in (0,T)$, the quantities $f(t,\lambda)$ and $\partial_t f(t,\lambda)$ vary continuously with respect to $\lambda = (x;h)$. This can be verified similarly to the equicontinuity of $f_\lambda$ and $\dot f_\lambda$. Note that there is no problem at $h=0$ since by assumption $\beta> 2\alpha$. We omit further details.
\end{proof}

\section{Technical details about the existence and size of the attractor}
\label{app:attractor:existence}

In this appendix, we present the present a number of technical lemmas which are needed in order to establish the existence of the global attractor for the solution map $S(t) \colon H^1 \to H^1$ associated to the critical SQG equation, and to give an estimate on its fractal dimension.

The following variant of the classical Gr\"onwall lemma is used in the proof of Theorem~\ref{thm:H:3/2:absorbing}, in order to bootstrap information about the time average of the $H^{3/2}$ norm, to information about the pointwise in time behavior of the $H^{3/2}$ norm. The lemma is due to~Foias and Prodi~\cite{FoiasProdi67}. See also~\cite{ConstantinFoias88,Temam97,Robinson01}.

\begin{lemma}[\bf Uniform Gr\"onwall Lemma]
\label{lem:Gronwall}
Assume $x,a,b \colon [0,\infty) \to [0,\infty)$  are functions such that 
\begin{align*}
\frac{dx}{dt} \leq a x + b
\end{align*}
and in addition assume that there exists $r>0, t_0 >0$ such that 
\begin{align*}
\int_{t}^{t+r} x(s) ds \leq X, \quad \int_{t}^{t+r} a(s) ds \leq A, \quad \int_{t}^{t+r} b(s) ds \leq B
\end{align*}
for all $t \geq t_0$. 
Then we have
\begin{align}
x(t) \leq (X r^{-1} + B) e^{A} \label{eq:unif:Gronwall}
\end{align}
for all $t \geq t_0 + r$.
\end{lemma}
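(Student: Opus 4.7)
The plan is to combine the standard (pointwise) Gr\"onwall inequality with an averaging argument over a window of length $r$. Fix $t \geq t_0 + r$ and an arbitrary $s \in [t-r, t]$; then $s \geq t_0$, so the integral hypotheses on $a$ and $b$ apply on $[s, s+r] \supset [s,t]$. Multiplying the differential inequality $dx/dt \leq a x + b$ by the integrating factor $\exp(-\int_s^\tau a(\sigma)\, d\sigma)$ and integrating in $\tau$ from $s$ to $t$ yields
\begin{align*}
x(t) \leq x(s) \exp\!\left(\int_s^t a(\sigma)\, d\sigma\right) + \int_s^t b(\tau) \exp\!\left(\int_\tau^t a(\sigma)\, d\sigma\right) d\tau.
\end{align*}
Since $s \in [t-r, t]$ and $a, b \geq 0$, the hypotheses give $\int_s^t a \leq A$ and $\int_s^t b \leq B$, so that
\begin{align*}
x(t) \leq x(s)\, e^A + B\, e^A.
\end{align*}

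The key observation is that the right-hand side depends on the choice of $s$ only through the term $x(s)$, whereas the left-hand side does not depend on $s$ at all. I will exploit this by averaging the inequality in $s$ over the window $[t-r, t]$. Since $t - r \geq t_0$, the hypothesis $\int_{t-r}^{t} x(s)\, ds \leq X$ applies, and integration produces
\begin{align*}
r\, x(t) \leq e^A \int_{t-r}^{t} x(s)\, ds + r B\, e^A \leq (X + r B)\, e^A.
\end{align*}
Dividing by $r$ gives the desired bound $x(t) \leq (X r^{-1} + B)\, e^A$ for all $t \geq t_0 + r$.

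There is no real obstacle here: the proof is a one-step application of the linear Gr\"onwall inequality followed by an $s$-average, and the non-negativity of $a$, $b$, $x$ together with the integrability hypotheses are precisely what is needed to justify the manipulations. The only minor point to verify is that the differential inequality can be interpreted in an absolutely continuous (or even just a.e.-differentiable) sense so that the integrating factor computation is rigorous; in all applications in this paper $x$ will be smooth in $t$, so this is automatic.
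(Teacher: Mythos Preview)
Your proof is correct and is in fact the standard argument for the uniform Gr\"onwall lemma (see, e.g., Temam or Foias--Prodi). The paper does not supply its own proof of this lemma but merely states it and cites the original references, so there is nothing to compare against; your write-up would serve as a complete and self-contained proof.
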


Next, we give the proof of the backwards uniqueness property for $S(t)$. The proof uses the classical log-convexity method of Agmon and Nirenberg~\cite{AgmonNirenberg67}, see also~\cite{Temam97,Robinson01,Kukavica07}.

\begin{proof}[Proof of Proposition~\ref{prop:backwards}]
 Let $\theta(t) = \theta^{(1)} - \theta^{(2)}$  and $\bar \theta (t) = ( \theta^{(1)}(t) + \theta^{(2)}(t))/2$ be the difference, respectively the average of the two solutions. The equation obeyed by $\theta$ is 
\begin{align*} 
\partial_t \theta + \kappa \Lambda \theta + \bar u \cdot \nabla \theta + u \cdot \nabla \bar \theta = 0, \quad \theta_0 = \theta^{(1)}_0 - \theta^{(2)}_0.
\end{align*}
By contradiction, assume that $\theta_0 \neq 0$. Then, by continuity in time, we have that $\|\theta(t)\|_{L^2} > 0$ for sufficiently small $t$, and let $\tau \in (0,T]$ be defined as the minimal time such that $\lim_{t \to \tau^{-}} \|\theta(t)\|_{L^2} = 0$. The inequality $\tau \leq T$ follows by assumption. By continuity in time, we can define $m = \max_{t\in [0,\tau]} \|\theta(t)\|_{L^2}$. Then, by the minimality of $\tau$, the function 
\[
w(t) = \log \frac{2 m}{\|\theta(t)\|_{L^2}}
\] 
is well-defined and positive on $[0,\tau)$, with $w(0) < \infty$. We compute
\begin{align*} 
\frac{d}{dt} w = - \frac{1}{\|\theta\|_{L^2}^2} \int\theta \partial_t \theta  dx 
&\leq  \frac{1}{\|\theta\|_{L^2}^2} \left(- \kappa \|\Lambda^{1/2} \theta\|_{L^2}^2 + \|u\|_{L^2} \|\nabla \bar \theta\|_{L^4} \|\theta\|_{L^4} \right) \notag\\
&\leq  \frac{1}{\|\theta\|_{L^2}^2} \left(- \kappa \|\Lambda^{1/2} \theta\|_{L^2}^2 + C \|\theta\|_{L^2} \|\Lambda^{1/2}\theta\|_{L^2} \|\bar \theta\|_{H^{3/2}} \right)
\end{align*}
by using the Sobolev embedding $H^{1/2} \subset L^4$. Since by assumption $\theta^{(i)} \in L^2(0,T;H^{3/2})$, we obtain from the Cauchy-Schwartz inequality and integrating in time that 
\begin{align*} 
w(t) \leq w(0) + C \int_0^\tau \|\bar \theta(s) \|_{H^{3/2}}^2 ds < \infty
\end{align*}
for all $t \in [0,\tau)$. This contradicts the assumption that as $t \to \tau^{-}$ we have $\|\theta(t)\|_{L^2} \to 0$, which is equivalent to $w(t) \to \infty$.
\end{proof}

We now give the proof of the continuity property of the solution map with respect to time and with respect to perturbations in the initial data, in the $H^1$ topology.

\begin{proof}[Proof of Proposition~\ref{prop:continuity}]
The continuity in time for fixed initial data was already given by Proposition~\ref{prop:local}, so it remains to check continuity with respect to the initial data which originates from $\BB_0$. 

Due to Theorem~\ref{thm:H:3/2:absorbing}, we know there exists an absorbing ball $\BB \subset H^{3/2}$ for the dynamics induced by $S(t)$ on $H^1$. In particular, by Remark~\ref{rem:uniform} there exist a time $t_{H^{3/2}}(\BB_0)$ such that $S(t) \BB_0 \subset \BB$ for all $t\geq t_{H^{3/2}}(\BB_0)$. Since the sequence $t_n$ in the statement of the proposition diverges as $n\to \infty$, we may assume without loss of generality that $\tilde \theta_{0,n}= S(t_n) \theta_{0,n} \in \BB$ for all $n\geq 1$, and even that $S(t) \tilde \theta_{0,n} \in \BB$ for all $t\geq 0, n\geq 1$.

Let $\theta_0 \in H^1$ be arbitrary. Fix some $ \tilde \theta_0 \in \BB$ such that $\tilde \theta(t) = S(t) \tilde \theta_0 \in \BB$ for all $t\geq 0$, and such that 
\begin{align*}
\|\theta_0 - \tilde \theta_0\|_{H^1} \leq \eps \kappa
\end{align*}  
for $\eps>0$ to be determined later.
Denote $\bar \theta(t)  = \tilde \theta(t) - \theta(t) = S(t) \tilde \theta_0 - S(t) \theta_0$, for all $t\geq 0$, and let $\bar u$ be the corresponding velocity difference. The equation obeyed by $\bar \theta$ is 
\begin{align*}
\partial_t \bar \theta + \kappa \Lambda \bar \theta - \bar u \cdot \nabla \bar \theta+  \tilde u \cdot \nabla \bar \theta + \bar u \cdot \nabla \tilde \theta = 0.
\end{align*}
Multiplying the above by $-\Delta \theta$, integrating over $\TT^2$, and integrating by parts, yields
\begin{align}
 \frac{1}{2} \| \bar \theta\|_{H^1}^2 + \kappa \| \bar \theta\|_{H^{3/2}}^2  
&\leq \left| \int (\nabla  \bar u \cdot \nabla \bar \theta ) \cdot \nabla \bar \theta \right|  + \left| \int (\nabla  \tilde u \cdot \nabla \bar \theta ) \cdot \nabla \bar \theta \right| + \left| \int \Lambda^{1/2} (\bar u \cdot \nabla \tilde \theta ) \Lambda^{3/2} \bar \theta \right| \notag\\
&\leq C(  \|\nabla  \bar u\|_{L^4} +  \|\nabla  \tilde u\|_{L^4}) \| \bar \theta\|_{H^1} \|\nabla \bar \theta\|_{L^4} + C  \| \Lambda^{1/2}( \bar u \cdot \nabla \tilde \theta)\|_{L^2} \| \bar \theta\|_{H^{3/2}} \notag\\
&\leq C ( \|\tilde  \theta\|_{H^{3/2}} +   \|\bar  \theta\|_{H^{3/2}} )  \| \bar \theta\|_{H^1} \|\bar \theta\|_{H^{3/2}} + C \|\bar u\|_{L^\infty} \| \tilde \theta \|_{H^{3/2}} \| \bar \theta\|_{H^{3/2}} \label{eq:cont:data:1}
\end{align}
Here we have also appealed to the fractional Sobolev embedding $H^{1/2} \subset L^4$, and the product estimate from Lemma~\ref{lem:fractional:calculus}. We may appeal to Brezis-Gallou\"et the inequality
\begin{align}
\|\bar u\|_{L^\infty} = \| \RSZ^\perp \bar \theta \|_{L^\infty} \leq C \|\bar \theta\|_{H^1}\left( 1+ \log \frac{\|\bar \theta\|_{H^{3/2}}^2}{ \|\bar \theta\|_{H^{1}}^2} \right)^{1/2} \label{eq:Brezis:Gallouet}
\end{align}
which combined with the inequality 
\begin{align}
a \mu \left(1+\log \frac{\mu^2}{b^2} \right)^{1/2} \leq \eps \mu^2 + \frac{a^2}{\eps} \log{\frac{2a}{\eps b}}
\label{eq:FMTK}
\end{align}
which holds for any $a,\eps>0$ and $\mu\geq b$ (see~\cite{FoiasManleyTemam88,Kukavica96}), and the estimate \eqref{eq:cont:data:1}, yields
\begin{align}
&\frac{d}{dt} \|\bar \theta\|_{H^1}^2 + \frac{\kappa}{2} \|\bar \theta\|_{H^{3/2}}^2 
\leq C_0 \|\bar \theta\|_{H^1} \|\bar \theta \|_{H^{3/2}}^2 + \frac{C_0}{\kappa} \|\bar \theta\|_{H^1}^2 \|\tilde \theta \|_{H^{3/2}}^2   \left( 1+  \log \frac{C_0 \| \tilde \theta\|_{H^{3/2}}}{\kappa} \right)
\label{eq:cont:data:2}
\end{align}
for some universal constant $C_0>0$.
Note that the initial data $\bar \theta_0$  obeys
\begin{align*}
\| \bar \theta_0\|_{H^1} \leq \eps \kappa
\end{align*}
where  $\eps$ is chosen so that  $4 \eps C_0 \leq 1$.  Due to continuity in time, we therefore conclude that there exists $T > 0$ such that on $[0,T]$ we have $ \|\bar \theta(t) \|_{H^1} \leq 2 \eps \kappa$, and on this time interval from \eqref{eq:cont:data:2} we conclude that 
\begin{align}
\frac{d}{dt} \|\bar \theta\|_{H^1}^2  \leq \|\bar \theta\|_{H^1}^2 \frac{C_0 M^2}{\kappa}     \left( 1+  \log \frac{C_0 M}{\kappa} \right)
\label{eq:cont:data:3}.
\end{align}
Here we used the assumption that $S(t) \tilde \theta_0 \in \BB$ for all $t \geq 0$ and denoted by $M$ the radius of the ball $\BB$. Thus, a posteriori we conclude that we could have chosen
\begin{align*}
T = \frac{ \kappa \log  2}{ C_0 M^2      \left( 1+  \log \frac{C_0 M}{\kappa} \right)} = T(\kappa, \BB)
\end{align*}
so that 
\begin{align*}
\|\bar \theta(t) \|_{H^1} \leq 2 \eps \kappa
\end{align*}
for $t\in [0,T]$. It is important that this $T$ is independent of $\eps$.

The proof of the proposition may now be concluded. The above estimates shows that as $\|\tilde \theta_{0,n} - \theta_0\|_{H^1} \to 0$, we have
\begin{align*}
\|S(t) \tilde \theta_{0,n} - S(t) \theta_0\|_{H^1}^2 \to 0 \quad \mbox{as} \quad n \to \infty
\end{align*}
for all $t \in [0,T(\kappa,\BB)]$. We then re-iterate this argument for $t \in [i T(\kappa,\BB), (i+1) T(\kappa,\BB)]$ for all $i \geq 1$, which proves the Proposition.
\end{proof}

We conclude the appendix by giving the proof of continuous differentiability of the solution solution map around trajectories on the global attractor.

\begin{proof}[Proof of Proposition~\ref{prop:unif:diff}]
For $\theta_0, \phi_0 \in \AA$, denote $\xi_0 = \phi_0 - \theta_0 $ and define $\xi(t) = S'(t,\theta_0) [\xi_0]$ via \eqref{eq:L:xi:def}. We let
\begin{align*}
\eta(t) = \phi(t) - \theta(t) - \xi(t) = S(t) \phi_0 - S(t) \theta_0 - S'(t,\theta_0)[\xi_0]
\end{align*}
and observe that $\eta$ obeys the equation
\begin{align}
\partial_t \eta + \kappa \Lambda \eta + \RSZ^\perp \eta \cdot \nabla \theta + \RSZ^\perp \theta \cdot \nabla \eta = - \RSZ^\perp w  \cdot \nabla w, \qquad \eta(0) = 0.
\label{eq:eta:PDE}
\end{align}
where 
\begin{align}
w(t) = \phi(t) - \theta(t) = S(t) \phi_0 - S(t) \theta_0 = S(t) \xi_0. \label{eq:w:t:def}
\end{align}
In order to estimate $\|\eta(t)\|_{H^1}$, take an $L^2$ inner product of \eqref{eq:eta:PDE} with $-\Delta \eta$ and use Lemma~\ref{lem:fractional:calculus} to obtain
\begin{align}
\frac 12 &\frac{d}{dt} \|\eta\|_{H^1}^2 + \kappa \|\eta\|_{H^{3/2}}^2 \notag\\
&=  \int_{\TT^2} \RSZ^\perp \eta \cdot \nabla \theta \Delta \eta dx - \int_{\TT^2} \partial_k \RSZ^\perp \theta \cdot \nabla \eta \partial_k \eta dx + \int_{\TT^2} \RSZ^\perp w \cdot \nabla w \Delta \eta dx \notag\\
&\leq \|\Lambda^{3/2}\eta\|_{L^2} \|\Lambda^{1/2}(\RSZ^\perp \eta \cdot \nabla \theta) \|_{L^2} + \|\nabla \RSZ^\perp \theta\|_{L^4} \|\nabla \eta\|_{L^4} \| \nabla \eta \|_{L^2} +   \|\Lambda^{3/2}\eta\|_{L^2} \|\Lambda^{1/2}(\RSZ^\perp w \cdot \nabla w) \|_{L^2}\notag\\
&\leq C  \|\eta\|_{H^{3/2}}  \left( \|\eta\|_{H^1} \|\theta\|_{H^{3/2}} + \|\eta\|_{H^{1/2}} \|\theta\|_{H^2} \right) +  C \|\eta\|_{H^{3/2}}  \left( \|w\|_{H^{1}} \|w\|_{H^{3/2}} + \|\RSZ^\perp w\|_{L^\infty} \|w\|_{H^{3/2}} \right)\notag\\
&\leq \frac{\kappa}{2} \|\eta\|_{H^{3/2}}^2 + \frac{C}{\kappa} \|\eta\|_{H^1}^2 \|\theta\|_{H^2}^2 + \frac{C}{\kappa} \|w\|_{H^1}^2 \|w\|_{H^{3/2}}^2 \left(1 + \log\frac{\|w\|_{H^{3/2}}^2}{\|w\|_{H^1}^2} \right)
\label{eq:eta:ODE:1}
\end{align}
for some universal constant $C>0$.
In the last inequality we have also appealed to \eqref{eq:Brezis:Gallouet}.

Next we estimate $w$, as defined in \eqref{eq:w:t:def}. It obeys the equation
\begin{align}
\partial_t w + \kappa \Lambda w  + \RSZ^\perp \phi \cdot \nabla w + \RSZ^\perp w \cdot \nabla \theta = 0, \quad w(0) = \xi_0.
\label{eq:w:PDE}
\end{align}
We note that in view of Theorem~\ref{thm:attractor:existence}, we a priori have estimates on the $H^{3/2}$ and even $H^2$ norms of $\theta$ and $\phi$, since they are elements of $\AA$. Multiplying \eqref{eq:w:PDE} with $-\Delta w$ and integrating, similarly to \eqref{eq:eta:ODE:1} we obtain
\begin{align}
&\frac 12 \frac{d}{dt} \|w\|_{H^1}^2 + \kappa \|w\|_{H^{3/2}}^2 \notag\\
&\qquad \leq\|w\|_{H^{3/2}} \left( \|\nabla \RSZ^\perp \phi\|_{L^4} \|\nabla w\|_{L^2} + C \|\Lambda^{1/2} \RSZ^\perp w\|_{L^4} \|\nabla \theta\|_{L^4} + C \|\RSZ^\perp w\|_{L^4} \|\Lambda^{1/2} \nabla \theta \|_{L^4} \right) \notag\\
&\qquad \leq \frac{\kappa}{2} \|w\|_{H^{3/2}}^2 + \frac{C}{\kappa} \|w\|_{H^1}^2 \left( \|\phi\|_{H^{3/2}}^2 + \|\theta\|_{H^{2}}^2  \right).
\label{eq:w:ODE:1}
\end{align}
The Gr\"onwall inequality and the bounds \eqref{eq:M:32:A}--\eqref{eq:M:2:A} for $\theta,\phi \in \AA$ then yield
\begin{align}
\|w(t) \|_{H^1}^2 \leq \|\xi_0\|_{H^1}^2 \exp \left( C \kappa^{-1} t M_{\AA}^2\right) \leq \|\xi_0\|_{H^1}^2 K(t,M_\AA)
\label{eq:w:H1}
\end{align}
for all $t\geq 0$.  Here and throughout the proof $K(\cdot,\cdot)$ is an increasing continuous function in each variable. This function {\em may change from line to line}. 

Inserting the estimate \eqref{eq:w:H1} back into \eqref{eq:w:ODE:1} gives
\begin{align}
\int_0^t \|w(s)\|_{H^{3/2}}^2 ds \leq \|\xi_0\|_{H^1}^2C \kappa^{-1}  \left( 1 +  \kappa^{-1} t M_{\AA}^2 \exp( C \kappa^{-1} t M_{\AA}^2) \right) \leq  \|\xi_0\|_{H^1}^2 K(t,M_\AA)
\label{eq:w:H32:weak}
\end{align}
for $t\geq 0$. Estimate \eqref{eq:w:H32:weak} can be upgraded to a pointwise in time bound, at the cost of losing the dependence of $\xi_0$ in $H^1$. Since 
\begin{align}
\| w(0)\|_{H^{3/2}} = \| \xi_0 \|_{H^{3/2}} = \| \phi_0 - \theta_0 \|_{H^{3/2}} \leq 2 M_\AA
\label{eq:w0:H32}
\end{align}
we are justified to study the  time evolution of $\|w(t)\|_{H^{3/2}}$. Taking an $L^2$ inner product of \eqref{eq:w:PDE} with $\Lambda^3 w$, we obtain
\begin{align*}
&\frac 12 \frac{d}{dt} \|w\|_{H^{3/2}}^2  + \kappa \|w\|_{H^2}^2 \notag\\
&\qquad \leq \|\Lambda^{3/2} w\|_{L^2} \| [ \Lambda^{3/2}, \RSZ^\perp \phi \cdot \nabla] w \|_{L^2} + \|\Lambda^2 w\|_{L^2} \|\Lambda (\RSZ^\perp w \cdot \nabla \theta )\|_{L^2}\notag\\
&\qquad \leq \|w\|_{H^{3/2}} \left( \|\Lambda^{3/2} \RSZ^\perp \phi\|_{L^4} \|\nabla w\|_{L^4}  + \|\nabla \RSZ^\perp \phi\|_{L^4} \|\Lambda^{1/2} \nabla w\|_{L^4} \right) \notag\\
&\qquad  \qquad \qquad + \|w\|_{H^2} \left( \|\nabla^\perp w\|_{L^4} \| \nabla \theta\|_{L^4}  +   \|\RSZ^\perp w\|_{L^\infty} \| \Lambda \nabla \theta\|_{L^2}\right)\notag\\
&\qquad \leq \frac{\kappa}{2} \|w\|_{H^2}^2 + \|w\|_{H^{3/2}}^2 \left( \|\phi\|_{H^2} + \frac{C}{\kappa} \|\theta\|_{H^2}^2  \right)
\end{align*} 
by using Lemma~\ref{lem:fractional:calculus} and the embedding $H^{1/2} \subset L^4$. It thus follows from the above estimate, the Gr\"onwall inequality, \eqref{eq:M:2:A}, and \eqref{eq:w0:H32}, that
\begin{align}
\|w(t)\|_{H^{3/2}}^2 \leq 4 M_\AA^2 \exp\left( C \kappa^{-1} t (1+ M_\AA^2) \right) \leq   K(t,M_\AA)
\label{eq:w:H32}
\end{align}
for some universal $C>0$.

We now combine \eqref{eq:eta:ODE:1} with  \eqref{eq:w:H1}   and \eqref{eq:w:H32} to obtain
\begin{align}
\frac{d}{dt} \|\eta\|_{H^1}^2 + \kappa \|\eta\|_{H^{3/2}}^2 
&\leq \frac{C}{\kappa} \|\eta\|_{H^1}^2 \|\theta\|_{H^2}^2 +  \frac{C}{\kappa} \|w\|_{H^1}^2 \|w\|_{H^{3/2}}^2 \left(1 + \log\frac{\|w\|_{H^{3/2}}^2}{\|w\|_{H^1}^2} \right) \notag\\
&\leq \frac{C}{\kappa} \|\eta\|_{H^1}^2 \|\theta\|_{H^2}^2 +  \frac{C}{\kappa} \|w\|_{H^1}^{2-a} \|w\|_{H^{3/2}}^{2+a} \notag \\
&\leq \frac{C}{\kappa} \|\eta\|_{H^1}^2 \|\theta\|_{H^2}^2 +    \|\xi_0\|_{H^1}^{2-a} K(t,M_\AA) \|w\|_{H^{3/2}}^{2}
\label{eq:w:ODE:2}
\end{align}
where $a \in (0,1)$ is arbitrary. Using the Gr\"onwall inequality combined with  \eqref{eq:M:2:A},  \eqref{eq:w:H32:weak}, and the fact that $\eta(0) =0$ we conclude from \eqref{eq:w:ODE:2} that
\begin{align} 
\|\eta(t)\|_{H^1}^2 
&\leq \exp\left( \frac{C}{\kappa} \int_0^t \|\theta(s)\|_{H^2}^2 ds \right)  \|\xi_0\|_{H^1}^{2-a} K(t,M_\AA) \int_0^t \|w(s)\|_{H^{3/2}}^{2} ds \notag\\
&\leq \|\xi_0\|_{H^1}^{4-a} K(t,M_\AA)
\label{eq:eta:est:final}
\end{align}
for a suitable function $K$ as described above, and $a \in (0,1)$. This proves that 
\begin{align*} 
\lim_{r\to 0+} \left( \sup_{\theta_0,\phi_0 \in \AA, 0 < \|\xi_0\|_{H^1} \leq r} \frac{\|\eta(t)\|_{H^1}^2}{\|\xi_0\|_{H^1}^2} \right) 
\leq \lim_{r\to 0+} r^{2-a} K(t ,M_\AA) =  \lim_{r\to 0+} e(r,t) = 0
\end{align*}
with $e(r,t) = r^{2-a} K(t ,M_\AA)$, 
and thus \eqref{eq:Frechet:differentiability} holds.

In order to prove \eqref{eq:Frechet:boundedness}, consider $\xi_0$ normalized so that $\|\xi_0\|_{H^1}=1$, and let $\theta_0 \in \AA$ be arbitrary.
Then, using similar estimates as above we have
\begin{align} 
\frac 12 \frac{d}{dt} \|\xi\|_{H^1}^2 + \kappa \|\xi\|_{H^{3/2}}^2 
&\leq C \|\xi\|_{H^{3/2}} \| \xi\|_{H^1} \|\theta\|_{H^{3/2}} + \|\RSZ^\perp \xi\|_{L^\infty} \|\xi\|_{H^1} \|\theta\|_{H^2}\notag \\
&\leq \frac{\kappa}{2} \|\xi\|_{H^{3/2}}^2 + \frac{C}{\kappa} \|\xi\|_{H^1}^2 \|\theta\|_{H^2}^2
\label{eq:xi:ODE:1}
\end{align}
which combined with \eqref{eq:M:2:A} yields
\begin{align} 
\|\xi(t) \|_{H^1}^2 \leq \exp\left(C \kappa^{-1} t M_\AA^2 \right) 
\label{eq:xi:H1}
\end{align}
which indeed proves \eqref{eq:Frechet:boundedness}.

It remains to prove that for any $t>0$ and $\theta_0 \in \AA$, the operator $S'(t,\theta_0)$ is compact. Without loss of generality we may look at the image under $S'(t,\theta_0)$ of the unit ball in $H^1$, and show it is precompact. More precisely, we show that the image of this ball is included in ball in $H^{3/2}$. Combine \eqref{eq:xi:ODE:1} and \eqref{eq:xi:H1} to obtain that
\begin{align*} 
\int_0^t \|\xi(s)\|_{H^{3/2}}^2 ds \leq K(t,M_\AA)
\end{align*}
for any $t>0$. By the mean value theorem, there exits $\tau \in (0,t/2)$ such that 
\begin{align} 
\|\xi(\tau)\|_{H^{3/2}}^2 \leq \frac{1}{t} K(t,M_\AA)
\label{eq:xi:H32:tau}.
\end{align}
Taking the inner product of \eqref{eq:L:xi:def} with $\Lambda^3 \xi$, using the usual commutator, Sobolev, and Poincar\'e inequalities, we obtain 
\begin{align} 
\frac 12 \frac{d}{dt} \|\xi\|_{H^{3/2}}^2 + \kappa \|\xi\|_{H^2}^2
&\leq \| [\Lambda^{3/2}, \RSZ^\perp \theta \cdot \nabla] \xi\|_{L^2} \|\Lambda^{3/2} \xi\|_{L^2} + \|\Lambda( \RSZ^\perp \xi \cdot \nabla \theta )\|_{L^2} \|\Lambda^{3/2} \xi\|_{L^2} \notag\\
&\leq C \|\theta\|_{H^2} \|\xi\|_{H^{3/2}}^2 + C \|\theta\|_{H^{3/2}} \|\xi\|_{H^2} \|\xi\|_{H^{3/2}} + C \|\RSZ^\perp \xi\|_{L^\infty} \|\theta\|_{H^2} \|\xi\|_{H^2} \notag\\
&\leq \frac{\kappa}{2} \|\xi\|_{H^2}^2 + \frac{C}{\kappa} \|\xi\|_{H^{3/2}}^2 \|\theta\|_{H^2}^2
\label{eq:xi:ODE:2}
\end{align}
at times larger than the $\tau$ in \eqref{eq:xi:H32:tau}. Integrating \eqref{eq:xi:ODE:2} between $\tau$ and $t$, and using the bound
\eqref{eq:M:2:A} and \eqref{eq:xi:H32:tau}, we thus obtain
\begin{align*} 
\|\xi(t)\|_{H^{3/2}}^2 
&\leq \|\xi(\tau)\|_{H^{3/2}}^2 \exp\left( \frac{C}{\kappa} \int_{\tau}^t \|\theta(s)\|_{H^2}^2 ds \right) \leq \frac{1}{t} K(t,M_\AA)
\end{align*}
for a suitable function $K$ which is continuous and increasing in all its parameters. This concludes the proof of the Proposition.
\end{proof}

\bigskip
{\bf Acknowledgements.}

The work of PC was supported in part by NSF
grants DMS-1209394, DMS-1265132, and DMS-1240743. The work of AT
was supported in part by the NSF GRFP grant. The work of VV was
supported in part by the NSF grant DMS-1211828.

\newcommand{\etalchar}[1]{$^{#1}$}

\end{document}